%% Formatting
\documentclass[a4paper, 10pt]{article}
\usepackage{fancyhdr}
\pagestyle{fancy}
\fancyhf{}
\fancyhead[L]{\rightmark}
\fancyfoot[R]{\thepage}
\setlength{\parindent}{0em}

%% Language and font encodings
\usepackage[english]{babel}
\usepackage[utf8x]{inputenc}
\usepackage[T1]{fontenc}
%% Sets page size and margins
\usepackage[a4paper,top=3cm,bottom=2cm,left=3cm,right=3cm,marginparwidth=1.75cm]{geometry}
%% Useful packages
\usepackage{lipsum}
\usepackage{adjustbox}
\usepackage{tikz-cd}
\usepackage{amsmath}
\usepackage{amscd}
\usepackage{amssymb}
\usepackage{amsfonts}
\usepackage{graphicx}
\usepackage[english]{babel}
\usepackage[utf8x]{inputenc}
\usepackage[T1]{fontenc}
\usepackage{amsthm}
\usepackage{mathtools}
\usepackage{commath}
\usepackage{tikz}
\usetikzlibrary{matrix,arrows}
\usepackage{graphicx}
\usepackage{extpfeil}
\usepackage{titlesec}
\usepackage{accents}
\usepackage[colorinlistoftodos]{todonotes}
\usepackage[colorlinks=true, allcolors=black]{hyperref}
%Plots Package and Parameterization
\usepackage{pgfplots,caption}
\pgfplotsset{compat=1.9}
%Declare Operators to Typeset Special Functions

\DeclareMathOperator{\arctanh}{arctanh}

%Format the Table of Contents
\usepackage{tocloft}

\newcommand{\subsubsubsection}[1]{\paragraph{#1}\mbox{}\\}
\setcounter{secnumdepth}{4}
\setcounter{tocdepth}{4}

%Reduce Margin Above Title
\usepackage{titling}
\setlength{\droptitle}{-0.5in}

%I'm not sure what these do
\newcommand*{\dt}[1]{%
   \accentset{\mbox{\large\bfseries .}}{#1}}

\newcommand{\defeq}{\stackrel{\text{def}}{=}}

 %Construct Function Plot
\pgfkeys{/pgf/declare function={arctanh(\x) = (1/2)*ln((1+\x)/(1-\x));}}
\pgfkeys{/pgf/declare function={Threshold(\x) = (1/(\x*arctanh(sqrt(3/(2*\x^2+3)))) ;}}

%Personal Information
\title{Spin(7) Instantons and Hermitian Yang-Mills Connections for the Stenzel Metric}
\author{Vasileios Ektor Papoulias}
%Initialize Theorem Environments
\theoremstyle{plain}
\newtheorem{theorem}{Theorem}[section]
\newtheorem{proposition}[theorem]{Proposition}

\newtheorem{corollary}[theorem]{Corollary}

\theoremstyle{definition}

\numberwithin{equation}{section}

%Begin Document
\begin{document}
\predate{}
\postdate{}
\date{}
\maketitle
\begin{abstract}
    We use the large isometry group of the Stenzel asymptotically conical Calabi-Yau metric on $T^{\star}S^{4}$ to study the relationship between the Spin(7) instanton and Hermitian-Yang Mills (HYM) equations. We reduce both problems to tractable ODEs and look for invariant solutions. In the abelian case, we establish local equivalence and prove a global nonexistence result. We analyze the nonabelian equations with structure group SO(3) and construct the moduli space of invariant Spin(7) instantons in this setting. This includes an explicit one parameter family of irreducible Spin(7) instantons only one of which is HYM. We thus negatively resolve the question regarding the equivalence of the two gauge theoretic PDEs. The HYM connections play a role in the compactification of this moduli space, exhibiting a phenomenon that we aim to further look into in future work.
\end{abstract}
\tableofcontents
\section{Introduction}
\subsection{Summary}
The inclusion of SU(4) in Spin(7) demonstrates that a Calabi-Yau 4-fold is -in a natural way- a Spin(7) manifold. The SU(4) and Spin(7) structures give rise to associated generalised instanton equations: the \textit{Hermitian Yang-Mills} (HYM) \textit{equations} and the Spin(7) \textit{instanton equations} respectively. It is natural to inquire about the relationship of these two gauge theoretic problems. One immediately observes that HYM is a stronger condition. In the compact case, it is known  that as long as an HYM connection exists, the two types of instantons coincide (Lewis \cite{Lewis}). Consequently, if one hopes to display a compact counterexample to equivalence, there must not be any HYM connections at all. Furthermore, we have a general existence theorem for HYM connections over stable holomorphic bundles (Uhlenbeck, Yau \cite{Uh}). This restricts the choices of bundles one could look at. Finally, compact, irreducible special holonomy manifolds admit no continuous symmetries (Joyce, \cite{Joyce}). This precludes the use of symmetry techniques. We are thus motivated to look for a non-compact counterexample. Since Lewis's argument is essentially an energy estimate, it does not apply to the noncompact setting.

$\newline$
We study a non-compact cohomogeneity one CY $4$-fold: the cotangent bundle of the $4$-sphere equipped with the Stenzel metric (Stenzel \cite{Stenz}). We use the natural cohomogeneity one SO(5)-action to reduce the instanton equations to tractable ODEs and proceed to study the SO(5)-invariant solutions. In section 2, we study the abelian equations. We establish the local equivalence of the two problems and prove a global nonexistence result. In section 3, we study the nonabelian equations corresponding to the structure group SO(3). We classify the relevant cohomogeneity one bundles and their invariant connections and we adjust the extension criterion of Eschenburg and Wang (Eschenburg, Wang\cite{Esch}) to this setting. We construct the full moduli space of SO(5)-invariant Spin(7) instantons with structure group SO(3). This contains solutions living on two distinct bundles which agree outside of a codimension $4$ Cayley submanifold. Each bundle carries a $1$-parameter family of instantons. Each of these families contains precisely one HYM connection in its interior. This negatively resolves the question regarding the equivalence of the two equations. One of the families is a closed interval. The other is a half-open half-closed interval. Its missing endpoint is the (unique) HYM connection on the other bundle. Our example suggests that the HYM connections might play a role in the compactification of Spin(7) instanton moduli spaces (over noncompact CY 4-folds), a phenomenon we intend to further look into in future work.

\subsection{The Stenzel Manifold}
In this section we provide a brief introduction to the Stenzel CY $4$-fold $X^{8}$. For details, we refer to the articles (Stenzel \cite{Stenz}), (Oliveira \cite{Oli}). The latter carries out the corresponding calculations in complex dimension $3$. The overall technique for studying invariant objects in cohomogeneity one is essentially the same as in the article (Lotay-Oliveira \cite{Lot}).
\subsubsection{The Underlying Manifold and the SO(5)-Action}
The underlying space of $X^{8}$ is the cotangent bundle of the $4$-sphere. The natural SO(5) action on $S^{4}$ extends to $T^{\star}S^{4}$ by pullback. The singular orbit is the zero section $S^{4}$. Its stabiliser is the group SO(4). The principal orbits are the positive radius sphere bundles in the metric inherited by $\mathbb{R}^{10}$. Their stabiliser is the group SO(3). They are $7$-dimensional Stiefel manifolds.

$\newline$
The underlying space of $X^{8}$ can be equivalently realised as a complex quadric in $\mathbb{C}^{5}$. It thus inherits a natural complex structure. Consider the degree $2$ homogeneous polynomial:
\begin{equation}\nonumber
F\defeq z_{1}^{2}+...+z_{5}^{2}.
\end{equation}
We split the complex coordinates of $\mathbb{C}^{5}$ into their real and imaginary parts $z_{j}=x_{j}+iy_{j}$ and introduce the functions:
\begin{equation}\nonumber
r^{2}\defeq|z_{1}|^{2}+|z_{2}|^{2}+|z_{3}|^{2}+|z_{4}|^{2}+|z_{5}|^{2},
\end{equation}
\begin{equation}\nonumber
R_{+}^{2}\defeq x_{1}^{2}+x_{2}^{2}+x_{3}^{2}+x_{4}^{2}+x_{5}^{2},\;\;\;R_{-}^{2}\defeq y_{1}^{2}+y_{2}^{2}+y_{3}^{2}+y_{4}^{2}+y_{5}^{2}.
\end{equation}
The following relations follow:
\begin{equation}\nonumber
R_{+}^{2}=\frac{r^{2}+1}{2},\;\; R_{-}^{2}=\frac{r^{2}-1}{2},\;\;r^{2}=R_{+}^{2}+R_{-}^{2}.
\end{equation}
Define the map:
$$\Psi:\mathbb{C}^{5}\to\mathbb{R}^{10},$$
\begin{equation}\label{Diffeo}
(z_{1},...,z_{5})\mapsto \left(\frac{x}{R_{+}},y^{\intercal}\right).
\end{equation}
It may be easily seen that this cuts down to a diffeomorphism:
\begin{equation}
\Psi:F^{-1}(1)\xrightarrow{\sim}T^{\star}S^{4}.
\end{equation}
The minimum value of $r$ on $X^{8}$ is $r=1$ and the associated level set corresponds to the singular orbit. The latter sits inside $X^{8}$ as an embedded totally real submanifold (Patrizio \cite{Pat}). We will denote the principal orbit at radius $r>1$ as $\mathcal{O}_{r}$. We define the reference points:
\begin{equation}\label{SpecialPoint}
p_{r}\defeq\left(R_{+},iR_{-},0,0,0\right)^{\intercal}\in \mathcal{O}_{r},\;\;\;\;p_{1}\defeq\left(1,0,0,0,0\right)^{\intercal}\in S^{4}.
\end{equation}
They form a ray from $p_{1}\in S^{4}$ to infinity. This choice fixes the embeddings of the principal and singular stabiliser groups in SO(5). They are the lower right copies of SO(3) and SO(4) respectively. Furthermore, the choice (\ref{SpecialPoint}) induces a projection map exhibiting $\mathcal{O}_{r}$ as a coset manifold:
\begin{align}
\pi:\text{SO}(5)&\to \mathcal{O}_{r},\nonumber\\
g&\mapsto gp_{r}.\label{OrbitBundleProjection}
\end{align}
The complement of the singular orbit splits as:
\begin{equation}\label{SphereBundle}
T^{\star}S^{4}-S_{4}\cong(0,\infty)\times\frac{\text{SO}(5)}{\text{SO(3)}}.
\end{equation}
We study the adjoint action of SO(5) to obtain a natural frame on $T_{p_{r}}\mathcal{O}_{r}$. The Lie algebra $\mathfrak{so}(5)$ consists of all $5\times 5$ antisymmetric matrices under the commutator bracket. It is given by:
\begin{equation}\nonumber
\mathfrak{so}(5)=\text{Span}\left\{C_{ij}\;|\;1\le i<j\le 5\right\},
\end{equation}
where $C_{ij}=e_{ij}-e_{ji}$ and $e_{ij}$ is the matrix with $ij$ entry equal to $1$ and all other entries vanishing. The bracket is characterized by the relations:
\begin{align}
\left[C_{ij},C_{ik}\right]&=-C_{jk},\label{Bracket1}\\
\left[C_{ij},C_{kl}\right]&=0\text{ for }i\neq j\neq k\neq l.\label{Bracket2}
\end{align}
We introduce the following notation:
\begin{align}
X_{1}&\defeq C_{12},\; X_{2}\defeq C_{13},\; X_{3}\defeq C_{14},\; X_{4}\defeq C_{15},\; X_{5}\defeq C_{23}\nonumber\\
X_{6}&\defeq C_{24},\; X_{7}\defeq C_{25},\; X_{8}\defeq C_{34},\; X_{9}\defeq C_{35},\; X_{10}\defeq C_{45}.\nonumber
\end{align}
Let $\rho$ be the restriction of $\text{Ad}_{\text{SO(5)}}$ to Stab($p_{r}$). An element $g\in \text{SO(3)}$ acts on $A\in\mathfrak{so}(5)$ by conjugation. We split this representation into irreducibles:
\begin{equation}\label{irreps}
\mathfrak{so}(5)=\langle X_{1} \rangle\oplus \langle X_{2},X_{3},X_{4} \rangle\oplus \langle X_{5},X_{6},X_{7} \rangle\oplus \langle X_{10},-X_{9},X_{8} \rangle.
\end{equation}
The first summand is trivial and the other three summands are isomorphic to the vector representation of SO(3) (the order in which the $X_{i}^{'s}$ appear corresponds to the standard basis of $\mathbb{R}^{3}$). The Lie algebra of the stabiliser is given by the third summand. We define the natural reductive complement:
\begin{equation}\label{Reductive}
\mathfrak{m}=\langle X_{1} \rangle\oplus \langle X_{2},X_{3},X_{4} \rangle\oplus \langle X_{5},X_{6},X_{7} \rangle.
\end{equation}
Owing to (\ref{irreps}), it is stable under $\rho$ yielding the isotropy representation. Using (\ref{Diffeo}) and (\ref{OrbitBundleProjection}) we find that:
\begin{align}\label{projection1}
d\pi_{|_{\text{Id}}}:\mathfrak{m}&\xrightarrow{\sim}T_{p_{r}}\mathcal{O}_{r},\nonumber\\
A&\mapsto\left(R_{+}c_{1}(A),-R_{-}r_{2}(A)\right).
\end{align}
Here $c_{1}(\cdot)$ denotes the operation of taking the first column and $r_{2}(\cdot)$ denotes the operation of taking the second row. Using \ref{projection1} we obtain the equations:
\begin{align}
& d\pi_{|_{\text{Id}}}X_{1}=-R_{+}\partial_{x^{2}_{|_{p_{r}}}}+R_{-}\partial_{y^{1}_{|_{p_{r}}}},\label{projection2}\\
& d\pi_{|_{\text{Id}}}X_{2}=-R_{+}\partial_{x^{3}_{|_{p_{r}}}},\label{projection3}\\
& d\pi_{|_{\text{Id}}}X_{3}=-R_{+}\partial_{x^{4}_{|_{p_{r}}}},\label{projection4}\\
& d\pi_{|_{\text{Id}}}X_{4}=-R_{+}\partial_{x^{5}_{|_{p_{r}}}},\label{projection5}\\
& d\pi_{|_{\text{Id}}}X_{5}=-R_{-}\partial_{y^{3}_{|_{p_{r}}}},\label{projection6}\\
& d\pi_{|_{\text{Id}}}X_{6}=-R_{-}\partial_{y^{4}_{|_{p_{r}}}},\label{projection7}\\
& d\pi_{|_{\text{Id}}}X_{7}=-R_{-}\partial_{y^{5}_{|_{p_{r}}}}.\label{projection8}
\end{align}
It is evident from (\ref{projection1})-(\ref{projection4}) that (for r=1) $X_{1},X_{2},X_{3},X_{4}$ correspond to infinitesimal motions in the horizontal direction along the base $S^{4}$. Similarly, (\ref{projection5})-(\ref{projection8}) demonstrate that (for r>1) $X_{5},X_{6},X_{7}$ correspond to infinitesimal vertical motions along the fiber $S^{3}$ of the sphere bundle.

$\newline$
To obtain a basis of $T_{p_{r}}X^{8}$ we need to combine $X_{1},...,X_{7}$ with a radial vector:
\begin{proposition}
There exists a unique smooth vector field $\partial_{r}$ on $X^{8}-S^{4}$ characterised by the following properties:
\begin{enumerate}
    \item{The vector field $\partial_{r}$ is tangent to $(0,\infty)$ in the splitting \ref{SphereBundle}}.
    \item{$dr\left(\partial _{r}\right)=1$}.
\end{enumerate}
Let $(x,y)\in X^{8}\subset\mathbb{C}^{5}$. The vector field $\partial_{r}$ can be expressed as follows in terms of the standard coordinate vector fields on $\mathbb{C}^{5}$:
\begin{equation}\label{radial}
    \partial_{r_{|_{(x,y)}}}=\frac{r}{2R_{+}^{2}}\left(\sum_{j=1}^{5}x^{j}\partial_{x^{j}_{|_{(x,y)}}}\right)+\frac{r}{2R_{-}^{2}}\left(\sum_{j=1}^{5}y^{j}\partial_{y^{j}_{|_{(x,y)}}}\right).
\end{equation}
\end{proposition}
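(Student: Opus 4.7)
For uniqueness, the splitting (\ref{SphereBundle}) decomposes $T_p(X^8 - S^4)$ at every point $p$ into the $1$-dimensional line tangent to the $(0,\infty)$ factor and the $7$-dimensional tangent space of the principal orbit through $p$. Condition (1) forces $\partial_r|_p$ to lie in the first summand, and condition (2) then pins down the scalar. For existence, the plan is to take the right-hand side of (\ref{radial}) as a candidate vector field, initially defined on $\mathbb{C}^5 \setminus (\{R_+ = 0\} \cup \{R_- = 0\})$, and to verify the three required properties: tangency to $X^8 \subset \mathbb{C}^5$, the identity $dr(\partial_r)=1$, and tangency to the $(0,\infty)$ factor of the splitting.

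Tangency to $X^8 = \{R_+^2 - R_-^2 = 1,\; \langle x, y\rangle = 0\}$ (the vanishing locus of the real and imaginary parts of $F-1$) follows from a short computation: applying the candidate to $R_\pm^2$ returns $r$ in both cases, so it annihilates $R_+^2 - R_-^2$, and applied to $\langle x, y\rangle = \sum_j x_j y_j$ it returns $\bigl(\tfrac{r}{2R_+^2} + \tfrac{r}{2R_-^2}\bigr)\langle x, y\rangle$, which vanishes on $X^8$. The identity $dr(\partial_r)=1$ is then immediate from $r\, dr = \sum_j(x_j\, dx_j + y_j\, dy_j)$, which follows by differentiating $r^2 = R_+^2 + R_-^2$.

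For the remaining condition I would invoke SO(5)-invariance. The expression (\ref{radial}) is manifestly invariant under the diagonal SO(5)-action on $(x,y)\in\mathbb{R}^{10}$, so it suffices to check the splitting condition at a single point of each orbit. Evaluating at the reference point $p_r$ yields $\tfrac{r}{2R_+}\partial_{x^1} + \tfrac{r}{2R_-}\partial_{y^2}$, which is precisely $\tfrac{d}{dr}p_r$ for the radial curve $r\mapsto p_r = (R_+(r), iR_-(r), 0, 0, 0)$. By construction this curve is the image under the diffeomorphism of (\ref{SphereBundle}) of $(0,\infty)\times\{[\text{Id}]\}$, so its tangent vector at $p_r$ spans the radial line of the splitting there; SO(5)-invariance then propagates the conclusion to all of $X^8 - S^4$. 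No step constitutes a genuine obstacle; the only conceptual subtlety is identifying the abstract radial direction in (\ref{SphereBundle}) with $\tfrac{d}{dr}p_r$, which is essentially built into the choice of reference point (\ref{SpecialPoint}).
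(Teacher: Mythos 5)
Your proof is correct; the paper actually states this proposition without proof, and your argument supplies exactly the natural verification (tangency of the candidate to $X^{8}$ via annihilating $R_{+}^{2}-R_{-}^{2}$ and $\langle x,y\rangle$, the normalisation $dr(\partial_{r})=1$ from $r\,dr=\sum_{j}(x_{j}\,dx_{j}+y_{j}\,dy_{j})$, and SO(5)-equivariance to reduce the splitting condition to the reference ray $p_{r}$, where the candidate visibly equals $\tfrac{d}{dr}p_{r}$). All computations check out and the uniqueness argument from the direct-sum decomposition of the tangent space is sound.
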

Evaluating the expression (\ref{radial}) at $p_{r}$ we obtain:
\begin{equation}\label{radialp}
\partial_{r}=\frac{r}{2R_{+}}\partial_{x^{1}}+\frac{r}{2R_{-}}\partial_{y^{2}}.
\end{equation}
Over $p_{r}$, tensors can be written as linear combinations of tensor products of $X_{i}$, $\partial_{r}$ and the dual coframe $\theta^{i},\; dr$. The tensor in question is invariant if and only if it is stabilised by the isotropy action. In that case, the basis expansion at $p_{r}$ is well defined over $X^{8}-S^{4}$. Of all the vectors in our frame, only $\partial_{r}$ and $X_{1}$ satisfy this condition. From here on, we will suppress application of $d\pi$ and evaluation at $p_{r}$. Using (\ref{projection2}-\ref{radialp}) we conclude that:
\begin{align}
& dx^{1}=\frac{r}{2R_{+}}dr,\;\;\; dy^{1}=R_{-}\theta^{1},\label{dxdyatp1}\\
& dx^{2}=-R_{+}\theta^{1},\;\;\; dy^{2}=\frac{r}{2R_{-}}dr,\label{dxdyatp2}\\
& dx^{3}=-R_{+}\theta^{2},\;\;\; dy^{3}=-R_{-}\theta^{5},\label{dxdyatp3}\\
& dx^{4}=-R_{+}\theta^{3},\;\;\; dy^{4}=-R_{-}\theta^{6},\label{dxdyatp4}\\
& dx^{5}=-R_{+}\theta^{4},\;\;\; dy^{5}=-R_{-}\theta^{7}.\label{dxdyatp5}
\end{align}
Using equations (\ref{dxdyatp1}-\ref{dxdyatp5}) we obtain:
\begin{align}
&dz^{1}=\frac{r}{2R_{+}}dr+iR_{-}\theta^{1},\label{dzatp1}\\
&dz^{2}=-R_{+}\theta^{1}+i\frac{r}{2R_{-}}dr,\label{dzatp2}\\
&dz^{3}=-R_{+}\theta^{2}-iR_{-}\theta^{5},\label{dzatp3}\\
&dz^{4}=-R_{+}\theta^{3}-iR_{-}\theta^{6},\label{dzatp4}\\
&dz^{5}=-R_{+}\theta^{4}-iR_{-}\theta^{7}.\label{dzatp5}
\end{align}
\subsubsection{The Stenzel Metric}
Since the second cohomology group vanishes, any K\"{a}hler structure comes from a global K\"{a}hler potential $\mathcal{F}(r^{2})$. The associated K\"{a}hler form will then be:
\begin{equation}\nonumber
\omega=\frac{i}{2}\partial\overline{\partial}\mathcal{F}(r^{2}).
\end{equation}
We introduce the functions:
\begin{equation}\label{P}
P(r)\defeq\frac{r}{2}\left(\frac{R_{+}}{R_{-}}+\frac{R_{-}}{R_{+}}\right)\mathcal{F}'(r^{2})+2rR_{+}R_{-}\mathcal{F}''(r^{2}),
\end{equation}
\begin{equation}\label{Q}
Q(r)\defeq R_{+}R_{-}\mathcal{F}'(r^{2}).
\end{equation}
A short calculation demonstrates that:
\begin{equation}\label{InvKahlerForm}
\omega=P(r)dr\wedge\theta^{1}+Q(r)\left(\theta^{25}+\theta^{36}+\theta^{47}\right).
\end{equation}
The volume form of the resulting metric looks like:
\begin{equation}\label{omegaVol}
\text{Vol}_{\omega}=\frac{\omega^{4}}{4!}=-PQ^{3}dr\wedge\theta^{1234567}.
\end{equation}
The complex structure $J$ can be written in terms of invariant forms:
\begin{align}
JX_{1}&=-\frac{2R_{+}R_{-}}{r}\partial_{r}, \;\; J\partial_{r}=\frac{r}{2R_{+}R_{-}}X_{1},\label{CplxX1}\\
&JX_{2}=\frac{R_{+}}{R_{-}}X_{5}, \;\;\; JX_{5}=-\frac{R_{-}}{R_{+}}X_{2},\label{CplxX2}\\
&JX_{3}=\frac{R_{+}}{R_{-}}X_{6}, \;\;\; JX_{6}=-\frac{R_{-}}{R_{+}}X_{3},\label{CplxX3}\\
&JX_{4}=\frac{R_{+}}{R_{-}}X_{7}, \;\;\; JX_{7}=-\frac{R_{-}}{R_{+}}X_{4}.\label{CplxX4}
\end{align}
Formulae (\ref{InvKahlerForm}), (\ref{CplxX1})-(\ref{CplxX4}) can be used to obtain the associated Riemannian metric:
\begin{align}
g&=\frac{rP}{2R_{+}R_{-}}dr\otimes dr+\frac{2R_{+}R_{-}P}{r}\theta^{1}\otimes\theta^{1}\label{InvMetric}\\
&+\frac{R_{+}Q}{R_{-}}\left(\theta^{2}\otimes\theta^{2}+\theta^{3}\otimes\theta^{3}+\theta^{4}\otimes\theta^{4}\right)\nonumber\\
&+\frac{R_{-}Q}{R_{+}}\left(\theta^{5}\otimes\theta^{5}+\theta^{6}\otimes\theta^{6}+\theta^{7}\otimes\theta^{7}\right).\nonumber
\end{align}
Among the invariant K\"{a}hler structures discussed so far, precisely one is Calabi-Yau: the Stenzel metric. We begin with a simple proposition characterizing the canonical bundle of $X^{8}$:
\begin{proposition}
The bundle $K_{X^{8}}$ is holomorphically trivial.
\end{proposition}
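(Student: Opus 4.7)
The plan is to exhibit an explicit nowhere-vanishing global holomorphic section of $K_{X^{8}}$ via a Poincar\'e residue construction. Since $X^{8}$ is realised as the smooth affine quadric $F^{-1}(1)\subset\mathbb{C}^{5}$ with $F=z_{1}^{2}+\cdots+z_{5}^{2}$, the natural candidate is the $4$-form $\Omega$ uniquely characterised on $X^{8}$ by the relation
\begin{equation}\nonumber
dF\wedge\Omega=dz^{1}\wedge dz^{2}\wedge dz^{3}\wedge dz^{4}\wedge dz^{5}.
\end{equation}
I would first observe that $dF=2\sum z_{j}\,dz^{j}$ never vanishes on $X^{8}$: the equation $F=1$ forces some $z_{j}\neq 0$, so $dF\neq 0$ everywhere on $F^{-1}(1)$. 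In particular, $X^{8}$ is smooth and the above relation determines a unique $(4,0)$-form after restriction to $X^{8}$.

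Next I would give local trivialisations. On the Zariski-open chart $U_{j}\defeq\{z_{j}\neq 0\}\cap X^{8}$, one can solve $F=1$ holomorphically for $z_{j}$, and direct verification shows
\begin{equation}\nonumber
\Omega|_{U_{j}}=\frac{(-1)^{j-1}}{2z_{j}}\,dz^{1}\wedge\cdots\wedge\widehat{dz^{j}}\wedge\cdots\wedge dz^{5},
\end{equation}
where the hat denotes omission. Each such local expression is manifestly holomorphic and nowhere vanishing on $U_{j}$. On overlaps $U_{i}\cap U_{j}$, the two formulas agree: both are uniquely characterised by the defining equation and using $dF|_{X^{8}}=0$ to substitute $dz^{i}$ in terms of the other differentials converts one presentation into the other. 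Hence the $\Omega|_{U_{j}}$ patch together to a global holomorphic $4$-form on $X^{8}$.

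Finally, since the charts $\{U_{j}\}_{j=1,\ldots,5}$ cover $X^{8}$ (again because $F=1$ forces at least one coordinate to be nonzero) and $\Omega$ is nowhere vanishing on each $U_{j}$, the form $\Omega$ is a global holomorphic trivialisation of $K_{X^{8}}$. This is a clean, standard argument, so I do not anticipate any serious obstacle; the only mildly delicate point is verifying the sign-consistent gluing across the five charts, which comes down to carefully tracking the permutation signs produced when one uses $dF|_{X^{8}}=0$ to exchange $dz^{i}$ and $dz^{j}$, and this is routine.
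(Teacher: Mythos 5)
Your proof is correct and is essentially the construction the paper uses: the paper's local forms $\Omega_{i}=\tfrac{1}{z^{i}}dz^{i+1}\wedge\cdots\wedge dz^{i-1}$ agree with your $\tfrac{(-1)^{j-1}}{2z_{j}}dz^{1}\wedge\cdots\wedge\widehat{dz^{j}}\wedge\cdots\wedge dz^{5}$ up to the harmless overall factor $\tfrac{1}{2}$. The only difference is that the paper leaves the gluing as ``easily checked,'' whereas you justify it cleanly via the Poincar\'e residue relation $dF\wedge\Omega=dz^{1}\wedge\cdots\wedge dz^{5}$ together with $\iota_{X^{8}}^{\star}dF=0$.
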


\begin{proof}
Let $S_{i}\subset\mathbb{C}^{5}$ be the open subset where $z_{i}\neq 0$. Introduce the following $(4,0)$-form on $S_{i}$:
\begin{equation}\label{HolVolForm}
    {\Omega_{i}}\defeq\frac{1}{z^{i}}dz^{i+1}\wedge dz^{i+2}\wedge...\wedge dz^{i-1}.
\end{equation}
Here the indices are reduced mod $5$. It is easily checked that the forms $\iota_{X^{8}}^{\star}\Omega_{i}$ glue to a global holomorphic volume form on $X^{8}$.
\end{proof}
The trivialization $\Omega$ is easily written in terms of invariant forms:
\begin{align}
\mathfrak{Re}(\Omega) &=R_{+}^{3}\theta^{1234}-R_{+}R_{-}^{2}\left(\theta^{1267}+\theta^{1537}+\theta^{1564}\right)\nonumber\\
&+\frac{r}{2}dr\wedge\left(R_{+}\left(\theta^{237}+\theta^{264}+\theta^{534}\right)-\frac{R_{-}^{2}}{R_{+}}\theta^{567}\right),\label{InvRealOmega}
\end{align}

\begin{align}
\mathfrak{Im}(\Omega)=&-R_{-}^{3}\theta^{1567}+R_{+}^{2}R_{-}\left(\theta^{1237}+\theta^{1264}+\theta^{1534}\right)\nonumber\\
&+\frac{r}{2}dr\wedge\left(R_{-}\left(\theta^{267}+\theta^{537}+\theta^{564}\right)-\frac{R_{+}^{2}}{R_{-}}\theta^{234}\right).\label{InvImaginaryOmega}
\end{align}
The associated volume form is then given by:
\begin{equation}\label{OmegaVol}
\text{Vol}_{\Omega}=(-1)^{\frac{n(n-1)}{2}}\left(\frac{i}{2}\right)^{n}\Omega\wedge\overline{\Omega}=-\frac{r}{2}R_{+}^{2}R_{-}^{2}dr\wedge\theta^{1234567}. 
\end{equation}
The Calabi-Yau equation is equivalent to volume compatibility:
\begin{equation}\nonumber
\text{Vol}_{\omega}=\text{Vol}_{\Omega}.
\end{equation}
Using (\ref{omegaVol}) and (\ref{OmegaVol}), we discover that this amounts to an ODE for $\mathcal{F}(r^{2})$:
\begin{equation}\label{CalabiYauEquation}
PQ^{3}=\frac{r}{2}R_{+}^{2}R_{-}^{2}.
\end{equation}
Equation (\ref{CalabiYauEquation}) can be solved explicitly. We obtain:
\begin{equation}\label{PCY}
P(r)=\left(\frac{3}{4}\right)^{\frac{3}{4}}\frac{r(r^{2}+1)}{(r^{2}+2)^{\frac{3}{4}}(r+1)^{\frac{1}{2}}(r-1)^{\frac{1}{2}}},
\end{equation}
 \begin{equation}\label{QCY}
Q(r)=\frac{1}{2}\left(\frac{4}{3}\right)^{\frac{1}{4}}(r^{2}+2)^{\frac{1}{4}}(r+1)^{\frac{1}{2}}(r-1)^{\frac{1}{2}}.
 \end{equation}
 We compute the pointwise norms of the vectors in the standard framing:
 \begin{equation}\label{X1lengthCY}
|X_{1}|^{2}=\left(\frac{3}{4}\right)^{\frac{3}{4}}\frac{(r^{2}+1)^{\frac{3}{2}}}{(r^{2}+2)^{\frac{3}{4}}},
\end{equation}

\begin{equation}\label{RadiallengthCY}
|\partial_{r}|^{2}=\left(\frac{3}{4}\right)^{\frac{3}{4}}\frac{r^{2}(r^{2}+1)^{\frac{1}{2}}}{(r^{2}+2)^{\frac{3}{4}}(r+1)(r-1)},
\end{equation}

\begin{align}
&|X_{2}|^{2}=|X_{3}|^{2}=|X_{4}|^{2}=\frac{1}{2}\left(\frac{4}{3}\right)^{\frac{1}{4}}(r^{2}+1)^{\frac{1}{2}}(r^{2}+2)^{\frac{1}{4}},\label{X234lengthCY}\\
&|X_{5}|^{2}=|X_{6}|^{2}=|X_{7}|^{2}=\frac{1}{2}\left(\frac{4}{3}\right)^{\frac{1}{4}}\frac{(r^{2}+2)^{\frac{1}{4}}(r+1)(r-1)}{(r^{2}+1)^{\frac{1}{2}}}.\label{X567lengthCY}
\end{align}
As $r\to 1$, $|\partial_{r}|^{2}$ blows up monotonically, $|X_{1}|^{2}$, $|X_{2}|^{2}$, $|X_{3}|^{2},|X_{4}|^{2}$ approach $1$ and $|X_{5}|^{2},|X_{6}|^{2},|X_{7}|^{2}$ tend to $0$. Over $S^{4}$, the kernel of the projection map (\ref{projection1}) extends to $\mathfrak{so}(4)$ so that $X_{5},X_{6},X_{7}$ project to $0$. Consequently, the decay of their norms holds for any smooth metric. We can pull $g$ back to the singular orbit $S^{4}$ to find that it is round of unit radius.

$\newline$
The Cayley Calibration of a CY 4-fold is given by (Salamon-Walpuski \cite{Sal} p.81):
\begin{equation}\label{CayleyCalib}
\Phi=\frac{\omega^{2}}{2}+\mathfrak{Re}(\Omega).
\end{equation}
Using (\ref{CayleyCalib}), we obtain:
\begin{align}\label{InvCayley}
\Phi&=dr\wedge\left[PQ\left(\theta^{125}+\theta^{136}+\theta^{147}\right)+\frac{rR_{+}}{2}\left(\theta^{237}+\theta^{264}+\theta^{534}\right)-\frac{rR_{-}^{2}}{2R_{+}}\theta^{567}\right]\nonumber\\
&+R_{+}^{3}\theta^{1234}-R_{+}R_{-}^{2}\left(\theta^{1267}+\theta^{1537}+\theta^{1564}\right)+Q^{2}\left(\theta^{2536}+\theta^{2547}+\theta^{3647}\right).
\end{align}
When we pull $\Phi$ back to the singular orbit $S^{4}$, only the $\theta^{1234}$ term survives. On $S^{4}$ we have $r=1$. We therefore find that:
\begin{equation}\label{Calibrated}
\iota_{S^{4}}^{\star}\Phi=\theta^{1234}.
\end{equation}
We conclude that the singluar orbit is calibrated for $\Phi$ and is therefore a Cayley submanifold of the Spin(7) manifold $(X^{8},\Phi)$. As such, it is volume minimizing in its homology class (Joyce \cite{Joyce}).

\newpage
\section{SO(5)-Invariant Instantons with Structure Group U(1)}
\subsection{Cohomogeneity One Bundles with Structure Group U(1)}\label{AbelianBundles}
Let $r>1$. The homogeneous U(1) bundles over the orbit $\mathcal{O}_{r}$ correspond to element-conjugacy (i.e. conjugation by a fixed element in the target) classes of Lie group homomorphisms:
\begin{equation}\label{HomogU(1)Bundles}
\lambda:\text{SO}(3)\to\text{U}(1).
\end{equation}
For the classification of homogeneous bundles and invariant connections see (Wang \cite{wang}, Oliveira \cite{Oli} Section 3.1, Lotay-Oliveria \cite{Lot} Section 2.4). Our notation and conventions agree with the latter.

$\newline$
Since the target is abelian, the element-conjugacy relation is trivial: the classes are singletons. The only map of type (\ref{HomogU(1)Bundles}) is $\phi=1$. Consequently, the only homogeneous U(1) bundle over $\mathcal{O}_{r}$ -up to equivariant princpal bundle isomorphism- is the trivial one:
\begin{equation}
P_{1}=\mathcal{O}_{r}\times \text{U}(1)=\frac{\text{SO}(5)}{\text{SO}(3)}\times\text{U}(1).
\end{equation}
SO(5)- invariant U(1)-connections on $P_{1}$ are parameterised by representation morphisms:
\begin{equation}
\Lambda:\left(\mathfrak{m},\text{Ad}_{\text{SO}(5)_{|_{\text{SO}(3)}}}\right)\to\left(\mathfrak{u}(1),\text{Ad}_{\text{U}(1)}\circ\lambda\right)=\left(i\mathbb{R},1\right).
\end{equation}
Recalling the decomposition (\ref{Reductive}) and applying Schur's lemma, we obtain that:
\begin{equation}
\text{Hom}_{\text{SO}(3)}\left(\mathfrak{m},\mathfrak{u}(1)\right)=i\mathbb{R}.
\end{equation}
Here, the imaginary number $i\alpha$ corresponds to:
\begin{equation}
\Lambda_{\alpha}\defeq i\alpha\theta^{1}.
\end{equation}
The cohomogeneity one bundle over $X^{8}-S^{4}$ associated to $P_{1}$ is obtained by pulling back along the map:
\begin{equation}\nonumber
X^{8}-S^{4}\xrightarrow{\sim}(1,\infty)\times\frac{\text{SO(5)}}{\text{SO(3)}}\twoheadrightarrow\frac{\text{SO(5)}}{\text{SO(3)}}.
\end{equation}
We slightly abuse notation by suppressing the pullback symbol and denoting the resulting bundle by $P_{1}$. It is trivial and it admits a unique extension across the singular orbit given by $X^{8}\times\text{U}(1)$.

$\newline$
Connections over $X^{8}-S^{4}$ can be put in temporal gauge through an equivariant gauge transformation (Lotay-Oliveira \cite{Lot} p. 21, Remark 5). Consequently, each invariant connection on $P_{1}$ is equivariantly gauge equivalent to one lying in the space:
\begin{equation}\label{U(1)InvConnection}
    \mathcal{A}_{\text{inv}}\left(P_{1}\right)=\left\{i \alpha(r)\theta^{1} \;|\;a\in C^{\infty}\left(0,\infty\right)\right\}\subset\mathcal{A}\left(P_{1}\right).
\end{equation}
These connections can only be related by an $r$-independent gauge transformation. If such a gauge transformation is equivariant, it is given by a fixed element of U(1) and it stabilises all connections. It follows that no two distinct elements of $\mathcal{A}_{\text{inv}}\left(P_{1}\right)$ are equivariantly gauge equivalent.

$\newline$
We compute the curvature of $A\in\mathcal{A}\left(P_{1}\right)$:
\begin{align}\label{curvature}
F_{A}&=dA\nonumber\\
&=i\frac{d\alpha}{dr}dr\wedge\theta^{1}+i\alpha(r)d\theta^{1}.
\end{align}
To simplify the second term we use the Maurer-Cartan relations (Kobayashi, Nomizu \cite{Kob} p. 41). For this calculation we require the structure constants of $\mathfrak{so}(5)$. They can be computed using (\ref{Bracket1}) and (\ref{Bracket2}). Carrying out the calculation gives:
\begin{equation}\nonumber
d\theta^{1}=\theta^{25}+\theta^{36}+\theta^{47}.
\end{equation}
Incorporating this into (\ref{curvature}), we obtain:
\begin{equation}\label{U(1)InvCurvature}
F_{A}=i\frac{d\alpha}{dr}dr\wedge\theta^{1}+i\alpha(r)\left(\theta^{25}+\theta^{36}+\theta^{47}\right).
\end{equation}
The Ambrose-Singer holonomy theorem implies that any non-flat U(1) connection is irreducible. Consequently, all elements of $\mathcal{A}_{\text{inv}}\left(P_{1}\right)$ -excluding the trivial connection- are irreducible.
\subsection{The SO(5)-Invariant ODEs}
The Spin(7) instanton equation reads:
\begin{equation}\label{Spin(7)InstantonEquation}
    \star_{g}F_{A}=-\Phi\wedge F_{A}.
\end{equation}
Since the metric diagonalises we have:
\begin{equation}\label{HodgeStar}
\star_{g}\theta^{i_{1}}\wedge...\wedge\theta^{i_{k}}=\frac{\sqrt{\det(g)}}{g_{i_{1}i_{1}}...g_{i_{k}i_{k}}}\theta^{i_{k+1}}\wedge...\wedge\theta^{i_{n}}.
\end{equation}
Here, $i_{1},...,i_{n}$ is an even permutation of $1,...,n$. Using (\ref{HodgeStar}) and (\ref{InvMetric}) we compute:
\begin{align}
&\star_{g}dr\wedge\theta^{1}=-\frac{Q^{3}}{P}\theta^{234567},\nonumber\\
&\star_{g}\theta^{25}=-PQ dr\wedge\theta^{13467},\nonumber\\
&\star_{g}\theta^{36}=-PQ dr\wedge\theta^{12356}.\nonumber
\end{align}
Incorporating these in (\ref{U(1)InvCurvature}), we obtain:
\begin{equation}\label{HodgeStarCurvature}
\star_{g}F_{A}=-i\frac{Q^{3}}{P}\frac{d\alpha}{dr}\theta^{234567}-iPQ\alpha dr\wedge\left(\theta^{13467}+\theta^{12457}+\theta^{12356}\right).
\end{equation}
We now use (\ref{InvCayley}) and (\ref{U(1)InvCurvature}) to compute:
\begin{equation}\label{PhiWedgeCurvature}
\Phi\wedge F_{A}=-3iQ^{2}\alpha(r)\theta^{234567}-i\left(Q^{2}\frac{d\alpha}{dr}+2PQ\alpha(r)\right)dr\wedge\left(\theta^{13467}+\theta^{12457}+\theta^{12356}\right).
\end{equation}
Imposing (\ref{Spin(7)InstantonEquation}) and comparing coefficients gives two equations. These are the same and read:
\begin{equation}\label{U(1)Spin(7)ODE}
\frac{d\alpha}{dr}=-3\frac{P}{Q}\alpha.
\end{equation}

$\newline$
The Hermitian Yang-Mills equations are:
\begin{align}
 F_{A}\wedge\star\omega&=0,\label{HYMEquation1}\\
 F_{A}\wedge\Omega&=0\label{HYMEquation2}.
\end{align}
We find that (\ref{HYMEquation2}) holds identically. This can be seen by direct computation using (\ref{InvRealOmega}), (\ref{InvImaginaryOmega}) and (\ref{U(1)InvCurvature}). It follows that an SO(5)-invariant U(1)-connection $A$ is HYM if and only if (\ref{HYMEquation1}) holds.

$\newline$
Over a Hermitian manifold of complex dimension $n$, we have:
\begin{equation}\label{U(1)StarOmega}
\star_{g}\omega=\frac{\omega^{n-1}}{(n-1)!}.
\end{equation}
Using (\ref{InvKahlerForm}), we compute:
\begin{equation}\label{CubeInvKahlerForm}
\omega^{3}=6PQ^{2}dr\wedge\left(\theta^{12536}+\theta^{12547}+\theta^{13647}\right)+6Q^{3}\theta^{253647}.
\end{equation}
Using (\ref{U(1)StarOmega}), (\ref{CubeInvKahlerForm}) and (\ref{U(1)InvCurvature}) we calculate:
\begin{align}
F_{A}\wedge\star\omega&=F_{A}\wedge\frac{\omega^{3}}{3!}\nonumber\\
&=-i\left(Q^{3}\frac{d\alpha}{dr}+3PQ^{2}\alpha(r)\right)dr\wedge\theta^{1234567}.\nonumber
\end{align}
It follows that an SO(5)-invariant U(1)-connection is HYM if and only if:
\begin{equation}\label{U(1)HYMODE}
\frac{d\alpha}{dr}=-3\frac{P}{Q}\alpha.
\end{equation}
We observe that this equation is the same as (\ref{U(1)Spin(7)ODE}).

$\newline$
Using the uniqueness part of the standard Picard theorem, we obtain:
\begin{theorem}
An \emph{SO(5)}-invariant \emph{U(1)}-connection over $X^{8}-S^{4}$ equipped with the Stenzel Calabi-Yau structure is a \emph{Spin(7)} instanton if and only if it is Hermitian-Yang-Mills.
\end{theorem}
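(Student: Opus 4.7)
The plan is to exploit the fact that the two symmetry reductions already performed yield literally the same first-order ODE, so that once I have reduced to a canonical representative of each gauge class, the equivalence becomes essentially a tautology.

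First, I would reduce to temporal gauge. By the discussion surrounding (\ref{U(1)InvConnection}), every SO(5)-invariant U(1)-connection on $P_{1}$ over $X^{8}-S^{4}$ is equivariantly gauge equivalent to a unique connection of the form $A_{\alpha} = i\alpha(r)\theta^{1}$ with $\alpha \in C^{\infty}(1,\infty)$. Since both the Spin(7) instanton condition (\ref{Spin(7)InstantonEquation}) and the HYM system (\ref{HYMEquation1})--(\ref{HYMEquation2}) are gauge-invariant, it is enough to establish the equivalence for representatives of this form.

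Second, I would compare the two reduced equations. For $A_{\alpha}$, the Spin(7) equation was shown above to be equivalent to (\ref{U(1)Spin(7)ODE}), while the HYM system reduces to (\ref{U(1)HYMODE}) (its $\Omega$-component being automatic by a type argument already noted in the excerpt). These two ODEs are identically
\[
\frac{d\alpha}{dr} = -3\frac{P}{Q}\alpha,
\]
so their solution sets coincide. This is where Picard--Lindel\"of enters, in a mild way: since $P/Q$ is smooth on $(1,\infty)$, any initial value $\alpha(r_{0})=\alpha_{0}$ determines a unique global solution, so the invariant Spin(7) moduli and the invariant HYM moduli are one and the same one-parameter family indexed by this initial datum.

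I anticipate no substantive obstacle beyond the routine bookkeeping already carried out: the genuine content of the statement lies entirely in the preceding curvature, Hodge star, Cayley calibration, and $\omega^{3}$ computations which, after being assembled, yield coefficients that agree on the nose. The only care required is in verifying that the $\Omega$-part of the HYM system imposes no additional constraint; this is handled directly via (\ref{InvRealOmega}), (\ref{InvImaginaryOmega}) and (\ref{U(1)InvCurvature}).
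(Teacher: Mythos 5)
Your proposal is correct and follows the paper's own route: reduce to temporal gauge, observe that the $\Omega$-component of HYM holds identically, and note that the Spin(7) and HYM conditions both reduce to the identical ODE $\frac{d\alpha}{dr}=-3\frac{P}{Q}\alpha$, so the solution sets coincide. The only cosmetic difference is that you invoke Picard--Lindel\"of merely to organize the common solution set by initial data, which matches the paper's (equally mild) appeal to uniqueness.
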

\subsection{Explicit Solution and Breakdown Near the Singular Orbit}
We study the ODE (\ref{U(1)HYMODE}). Using (\ref{PCY}) and (\ref{QCY}) we write it as:
\begin{equation}\label{U(1)ODE}
\frac{da}{dr}=-\frac{9}{2}\frac{r(r^{2}+1)}{(r^{2}+2)(r+1)(r-1)}\alpha(r).
\end{equation}
We integrate (\ref{U(1)ODE}) directly to see that the solution takes the following form for some $K\in\mathbb{R}$:
\begin{equation}\label{U(1)sol}
\alpha(r)=\frac{K}{(r^{2}+2)^{\frac{3}{4}}(r+1)^{\frac{3}{2}}(r-1)^{\frac{3}{2}}}.
\end{equation}
An elementary calculation yields:
\begin{equation}\label{U(1)solDer}
\frac{da}{dr}=-\frac{9K}{2}\frac{r(r^{2}+1)}{(r^{2}+2)^{\frac{7}{4}}(r+1)^{\frac{5}{2}}(r-1)^{\frac{5}{2}}}.
\end{equation}
Recalling the formulae (\ref{U(1)InvConnection}) and (\ref{U(1)InvCurvature}) and incorporating (\ref{U(1)sol}) and (\ref{U(1)solDer}), we obtain:
\begin{theorem}\label{U(1)P1InstantonClassificationTheorem}
Let $M=X^{8}-S^{4}$ be equipped with the Stenzel Calabi-Yau structure. Let $P_{1}$ be the unique cohomogeneity one \emph{U(1)}-bundle over $M$ (i.e. the trivial bundle). There exists a one-parameter family of smooth \emph{SO(5)}-invariant \emph{Spin(7)} instantons $A_{K}\in\mathcal{A}_{\text{inv}}\left(P_{1}\right)$:
\begin{equation}
A_{K}=\frac{iK}{(r^{2}+2)^{\frac{3}{4}}(r+1)^{\frac{3}{2}}(r-1)^{\frac{3}{2}}}\theta^{1}\;\text{ where }K\in\mathbb{R}.
\end{equation}
All elements of this family are HYM. They are all irreducible -apart from the trivial connection- and no two of them are gauge equivalent. Furthermore, these are all the invariant \emph{Spin(7)} instantons on $P_{1}$.

$\newline$
The curvature of $A_{K}$ is given by:
\begin{equation}\label{U(1)InstantonCurv}
F_{A_{K}}=iK\left(-\frac{9}{2}\frac{r(r^{2}+1)}{(r^{2}+2)^{\frac{7}{4}}(r+1)^{\frac{5}{2}}(r-1)^{\frac{5}{2}}}dr\wedge\theta^{1}+\frac{\theta^{25}+\theta^{36}+\theta^{47}}{(r^{2}+2)^{\frac{3}{4}}(r+1)^{\frac{3}{2}}(r-1)^{\frac{3}{2}}}\right).
\end{equation}
\end{theorem}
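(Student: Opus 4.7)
The plan is to assemble the theorem from three ingredients already established in the excerpt: the parameterization of invariant connections on $P_1$, the reduction of both gauge-theoretic equations to a single scalar ODE, and the explicit integration of that ODE.

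First, by the discussion surrounding (\ref{U(1)InvConnection}), every SO(5)-invariant connection on $P_1$ over $M$ is equivariantly gauge equivalent to a unique element $A = i\alpha(r)\theta^1$ with $\alpha \in C^\infty(0,\infty)$. The classification problem thus reduces to determining the smooth functions $\alpha$ on $(1,\infty)$ for which $A$ is a Spin(7) instanton. By (\ref{U(1)Spin(7)ODE}) this is equivalent to the first-order linear ODE $d\alpha/dr = -3(P/Q)\alpha$, which is precisely the HYM ODE (\ref{U(1)HYMODE}); hence any Spin(7) instanton in this family is automatically HYM, and conversely.

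Second, I substitute the Stenzel-specific expressions (\ref{PCY})-(\ref{QCY}) for $P$ and $Q$ and simplify the ratio $P/Q$ to obtain (\ref{U(1)ODE}). This ODE is separable: writing $d\alpha/\alpha$ on the left and integrating the rational function of $r$ on the right by partial fractions (the substitution $u = r^2$ reduces the integrand to simple fractions in $u+2$ and $u-1$), the antiderivative exponentiates to the factored form (\ref{U(1)sol}), parameterized by the single constant of integration $K \in \mathbb{R}$. Picard uniqueness at any interior point of $(1,\infty)$ precludes further solutions, and differentiating gives (\ref{U(1)solDer}).

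Third, the remaining assertions follow immediately from earlier results: smoothness on $M$ holds because the denominator in (\ref{U(1)sol}) never vanishes for $r>1$; irreducibility of every nontrivial $A_K$ is the Ambrose-Singer remark at the close of Section 2.1; and pairwise gauge-inequivalence of distinct family members was established in the paragraph following (\ref{U(1)InvConnection}). The explicit curvature formula (\ref{U(1)InstantonCurv}) is obtained by substituting (\ref{U(1)sol}) and (\ref{U(1)solDer}) into (\ref{U(1)InvCurvature}). Since each step is either a direct substitution, a routine partial-fraction integration, or a citation of earlier material, there is no substantive obstacle; the only place that requires care is verifying that the antiderivative of $r(r^2+1)/((r^2+2)(r^2-1))$ really does combine, after multiplication by $-9/2$ and exponentiation, into the factored denominator $(r^2+2)^{3/4}(r+1)^{3/2}(r-1)^{3/2}$ appearing in (\ref{U(1)sol}).
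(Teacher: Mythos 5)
Your proposal follows essentially the same route as the paper: parameterize invariant connections in temporal gauge, observe that the Spin(7) and HYM conditions reduce to the identical linear ODE, integrate it explicitly, and import the irreducibility and uniqueness statements from Section 2.1. The one imprecision is your final citation for pairwise gauge-inequivalence: the paragraph following (\ref{U(1)InvConnection}) only establishes that distinct elements of $\mathcal{A}_{\text{inv}}(P_{1})$ are not \emph{equivariantly} gauge equivalent, whereas the theorem asserts inequivalence under arbitrary gauge transformations; the paper bridges this by invoking the fact that gauge equivalent, irreducible, invariant connections are automatically equivariantly gauge equivalent, a step your argument should include (or replace by noting that for an abelian structure group the curvature is gauge-invariant and the $F_{A_{K}}$ are pairwise distinct).
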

We need to justify our claim that these solutions lie in distinct gauge equivalence classes. We have seen that this holds when we quotient by the group of equivariant gauge transformations. The claim follows from the fact that gauge equivalent, irreducible, invariant connections are automatically equivariantly gauge equivalent.

$\newline$
Using (\ref{X1lengthCY}), (\ref{RadiallengthCY}), (\ref{X234lengthCY}) and (\ref{X567lengthCY}), we find that:
\begin{equation}\label{AbelianBlowupRate}
    \norm{F_{A}}_{g}^{2}=O\left(|r-1|^{-4}\right)\text{ as }r\to 1.
\end{equation}
In particular:
\begin{equation}\nonumber
    \lim_{r\to 1}\norm{F_{A}}_{g}^{2}=+\infty.
\end{equation}
Since the metric extends smoothly to the singular orbit, this behaviour is precluded for connections that are smooth over the whole space. We therefore obtain the following global nonexistence result:
\begin{theorem}
There exist no global, abelian, \emph{SO(5)}-invariant \emph{Spin(7)} instantons/ HYM connections on the Stenzel manifold $X^{8}$ apart from the trivial connection $A=0$ (corresponding to $K=0$).
\end{theorem}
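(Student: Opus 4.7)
The strategy is to combine the classification result of Theorem~\ref{U(1)P1InstantonClassificationTheorem} with the curvature blow-up rate \eqref{AbelianBlowupRate} to rule out all solutions except $K=0$. Concretely, suppose $A$ is a smooth global SO(5)-invariant Spin(7) instanton (equivalently, HYM, by the equivalence theorem proved earlier) on the unique cohomogeneity one U(1) bundle over $X^{8}$, namely $X^{8}\times \mathrm{U}(1)$. Restricting $A$ to $M=X^{8}-S^{4}$ gives an SO(5)-invariant instanton on $P_{1}$, which by the temporal gauge argument recalled before \eqref{U(1)InvConnection} is equivariantly gauge equivalent to some element of $\mathcal{A}_{\text{inv}}(P_{1})$. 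By Theorem~\ref{U(1)P1InstantonClassificationTheorem}, this element must be $A_{K}$ for some $K\in\mathbb{R}$.

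Next I would observe that any equivariant gauge transformation relating $A$ to $A_{K}$ on $M$ is the restriction of a global smooth equivariant gauge transformation on $X^{8}\times \mathrm{U}(1)$ (it is determined by a single element of $\mathrm{U}(1)$, as noted in the discussion following \eqref{U(1)InvConnection}). In particular, after such a gauge transformation, $A$ itself is a smooth global connection whose restriction to $M$ is literally $A_{K}$. Since the pointwise curvature norm $\|F_{A}\|_{g}^{2}$ is gauge invariant, it extends continuously to $S^{4}$ as a smooth function on $X^{8}$, and therefore must remain bounded on a compact neighborhood of the singular orbit.

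On the other hand, \eqref{AbelianBlowupRate} asserts
\begin{equation}\nonumber
\|F_{A_{K}}\|_{g}^{2}=O\!\left(|r-1|^{-4}\right)\qquad\text{as }r\to 1,
\end{equation}
and inspection of the explicit formula \eqref{U(1)InstantonCurv} together with the norm identities \eqref{X1lengthCY}--\eqref{X567lengthCY} shows that this blow-up is genuine whenever $K\neq 0$ (the leading coefficient of $|r-1|^{-4}$ is a nonzero multiple of $K^{2}$). Combining the two requirements forces $K=0$, and $A_{0}=0$ is the trivial connection.

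The main obstacle in writing this out rigorously is not the blow-up estimate itself, which is immediate from the formulas already in hand, but rather the transition from an arbitrary smooth global invariant connection to one of the model solutions $A_{K}$: one must verify that the equivariant temporal gauge transformation used on $M$ extends smoothly across $S^{4}$, and that the classification on $M$ indeed exhausts all invariant connections once this gauge is fixed. Both facts follow from the structure of $P_{1}$ described in Section~\ref{AbelianBundles} and the standard fact that on the trivial $\mathrm{U}(1)$ bundle the remaining equivariant gauge freedom is just the constant group $\mathrm{U}(1)$, which acts trivially on connections.
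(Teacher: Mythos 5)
Your proposal is correct and follows essentially the same route as the paper: classify the invariant solutions on $X^{8}-S^{4}$ as the family $A_{K}$, observe from \eqref{U(1)InstantonCurv} and the norm formulae \eqref{X1lengthCY}--\eqref{X567lengthCY} that $\norm{F_{A_{K}}}_{g}^{2}$ blows up like a nonzero multiple of $K^{2}|r-1|^{-4}$ near the singular orbit, and conclude that smoothness across $S^{4}$ forces $K=0$. Your extra attention to the gauge-fixing step (and the observation that gauge invariance of the pointwise curvature norm makes extending the gauge transformation unnecessary) is a welcome tightening of a point the paper passes over quickly.
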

As a closing remark, we note that breakdown around Cayley $4$-folds is an interesting feature of the Spin(7)-instanton equation. It is related to the non-compactness of the moduli space. In Donaldson theory, noncompactness occurs in the form of a sequence of ASD instantons failing to have a limit due to finitely many point singularities. In the $8$-dimensional Spin(7) setting, points are typically replaced by four-dimensional Cayley submanifolds.

$\newline$
The non-existence result we encountered has to do with abelian gauge theory being too coarse to capture the behaviour we would like to see. In the following section we study the nonabelian equations associated to the structure group SO(3). The nonlinearity induced by the noncommutativity of the group \textit{smoothes} the equations and we are able to obtain solutions that extend over the singular orbit $S^4$.

\newpage
\section{SO(5)-Invariant Instantons with Structure Group SO(3): Preliminaries and Analysis on the Trivial Bundle}
\subsection{Cohomogeneity One Bundles with Structure Group SO(3)}
\subsubsection{Bundles and Bundle Extensions}
Let $r>1$. The homogeneous SO(3) bundles over the orbit $\mathcal{O}_{r}$ correspond to element-conjugacy classes of Lie group homomorphisms:
\begin{equation}
\lambda:\text{SO}(3)\to\text{SO}(3).
\end{equation}
There are two such classes. They are represented by the trivial map and the identity respectively. Consequently, there are precisely two homogeneous principal SO(3) bundles over $\mathcal{O}_{r}$ -up to equivariant principal bundle isomorphism. We denote these by $P_{1}$ and $P_{\text{Id}}$. Slightly abusing notation, we also denote by $P_{1}$ and $P_{\text{Id}}$ the pullbacks of the respective bundles along the map:
\begin{equation}\nonumber
X^{8}-S^{4}\xrightarrow{\sim}(1,\infty)\times\frac{\text{SO(5)}}{\text{SO(3)}}\twoheadrightarrow\frac{\text{SO(5)}}{\text{SO(3)}}.
\end{equation}
We now classify smooth homogeneous extensions of $P_{1}$ and $P_{\text{Id}}$ across the singular orbit $S^{4}$. These correspond to element-conjugacy classes of Lie group homomorphisms:
\begin{equation}\label{homomorphSO(3)ext}
    \mu:\text{SO}(4)\to\text{SO(3)}.
\end{equation}
Once such a map is chosen, one uses it to form the associated homogeneous bundle $P_{\mu}$ over $S^{4}$. The extension is then determined by pulling $P_{\mu}$ back over $X^{8}$ through the natural projection:
\begin{equation}\nonumber
X^{8}\cong T^{\star}S^{4}\twoheadrightarrow S^{4}.
\end{equation}
The element-conjugacy class of the restriction of $\mu$ to the lower diagonal copy of SO(3) determines which bundle is being extended.

$\newline$
We are therefore required to classify element-conjugacy classes of homomorphisms of type (\ref{homomorphSO(3)ext}). Natural representatives are described by passing through the respective universal covers. We have the two-sheeted covering maps:
\begin{align}
    &\pi_{\text{Spin}(4)}:\text{Sp}(1)^{2}\twoheadrightarrow \text{SO}(4),\nonumber\\
    &\pi_{\text{Spin}(3)}:\text{Sp}(1)\twoheadrightarrow \text{SO}(3);\nonumber
\end{align}
where:
\begin{align}
    &\pi_{\text{Spin}(4)}(x,y):\mathbb{H}\to\mathbb{H},\nonumber\\
    &\;\;\;\;\;\;\;\;\;\;\;\;\;\;\;\;\;\;\;\;\;\;\;\;q\mapsto xqy^{-1},\label{Spin(4)}\\
    &\pi_{\text{Spin}(3)}(x)=\pi_{\text{Spin}(4)}(x,x)_{|_{\mathfrak{Im}(\mathbb{H})}}.\label{Spin(3)}
\end{align}
It is clear that the upper left copy of $\text{SO}(3)$ inside $\text{SO}(4)$ corresponds to the anti-diagonal copy of Sp(1) in $\text{Sp}(1)^{2}$, while the lower right copy of SO(3) corresponds to the diagonal one. Considering (\ref{Spin(4)}) and (\ref{Spin(3)}), we obtain:
\begin{equation}\nonumber
    \text{SO}(4)=\frac{\text{Sp(1)}^{2}}{\left\{(1,1),(-1,-1)\right\}},\;\;\;\text{SO}(3)=\frac{\text{Sp(1)}}{\pm 1}.
\end{equation}
There are precisely three element-conjugacy classes of homomorphisms of type (\ref{homomorphSO(3)ext}), one of them being that of the trivial map. The two nontrivial classes are represented by the two projections
\begin{equation}\nonumber
    \pi_{1},\pi_{2}:\text{SO}(4)=\frac{\text{Sp(1)}^{2}}{\left\{(1,1),(-1,-1)\right\}}\twoheadrightarrow\frac{\text{Sp(1)}}{\pm 1}\times \frac{\text{Sp(1)}}{\pm 1}\twoheadrightarrow\frac{\text{Sp(1)}}{\pm 1}=\text{SO}(3).
\end{equation}
Hence, there are precisely three principal SO(3)-bundles of cohomogeneity-one over $X^{8}$. We denote these as $P_{1}$, $P_{\pi_{1}}$ and $P_{\pi_{2}}$. The first is the trivial bundle. It extends the trivial bundle on $X^{8}-S^{4}$. The other two bundles are nontrivial (see section \ref{BundNotTriv}). They provide distinct extensions of $P_{\text{Id}}$.

\subsubsection{Invariant Connections on the Complement of the Singular Orbit}
We now classify the invariant connections on the bundles $P_{1}$ and $P_{\text{Id}}$ over $X^{8}-S^{4}$. For each of these connections we compute the associated curvature tensor in terms of the standard framing.

$\newline$
In general, an invariant connection $A\in\mathcal{A}_{\text{inv}}\left(P_{\lambda}\right)$ corresponds to a map of representations:
\begin{equation}\label{InvConSO(3)}
    \Lambda:\left(\mathfrak{m},\text{Ad}_{\text{SO}(5)_{|_{\text{SO}(3)}}}\right)\to \left(\mathfrak{so}(3),\text{Ad}_{\text{SO}(3)}\circ\lambda\right).
\end{equation}
Given such a map, we use the canonical invariant connection $d\lambda$ as a reference and write:
\begin{equation}
A=d\lambda+\Lambda.
\end{equation}
We first deal with $P_{1}$. In this case $\lambda=1$ and the target representation is trivial. Recalling the splitting (\ref{Reductive}) and applying Schur's lemma, we see that $\Lambda$ must take the form:
\begin{equation}\label{LambdSO(3)triv}
\Lambda = \theta^{1}\otimes\left(a^{1}\;e_{1}+a^{2}\;e_{2}+a^{3}\;e_{3}\right).
\end{equation}
The canonical invariant connection $A^{can}_{1}$ is represented by $d\;1=0$. Evidently, it is flat.

$\newline$
As in section \ref{AbelianBundles}, any connection over $X^{8}-S^{4}$ can be brought to temporal gauge by an equivariant gauge transformation. It follows that any invariant connection on $P_{1}$ is equivariantly gauge equivalent to one lying in the space:
\begin{equation}\label{InvConSO(3)triv}
    \mathcal{A}_{\text{inv}}\left(P_{1}\right)=\left\{\theta^{1}\otimes\left(a^{1}(r)\;e_{1}+a^{2}(r)\;e_{2}+a^{3}(r)\;e_{3}\right)\;|\;a^{1},a^{2},a^{3}\;\in C^{\infty}\left(0,\infty\right)\right\}.
\end{equation}
A gauge transformation relating two elements of $\mathcal{A}_{\text{inv}}\left(P_{1}\right)$ must be $r$-independent. If it is equivariant, it is given by a fixed element of SO(3) acting on $\mathcal{A}_{\text{inv}}\left(P_{1}\right)$ by conjugation. It follows that the elements of $\mathcal{A}_{\text{inv}}\left(P_{1}\right)$ need not lie in distinct equivariant gauge equivalence classes.

$\newline$
A calculation analogous to the one in section $2.1$ yields:
\begin{equation}\label{CurvSO(3)Triv}
 F_{A}=\left(\frac{da^{i}}{dr}dr\wedge\theta^{1}+a^{i}\left(\theta^{25}+\theta^{36}+\theta^{47}\right)\right)\otimes e_{i}.  
\end{equation}
The Ambrose-Singer holonomy theorem implies that the elements of $\mathcal{A}\left(P_{1}\right)$ need not be irreducible. This happens -for instance- if one of the components $a^{i}$ vanishes identically.

$\newline$
We now work on $P_{\text{Id}}$. In this case, the target representation is the adjoint representation of SO(3). Recalling the decomposition (\ref{Reductive}) and applying Schur's lemma, we see that equivariant maps of type (\ref{InvConSO(3)}) always vanish on the first summand and either restrict to isomorphisms or the zero map on the second and third summands. The automorphisms of $\text{Ad}_{\text{SO}(3)}$ are given by multiplication by fixed scalars. We concude that for $\lambda=\text{Id}$, maps of type (\ref{InvConSO(3)}) look like:
\begin{equation}\nonumber
    \Lambda=a\left(\theta^{2}\otimes e_{3}-\theta^{3}\otimes e_{2}+\theta^{4}\otimes e_{1}\right)+b\left(\theta^{5}\otimes e_{3}-\theta^{6}\otimes e_{2}+\theta^{7}\otimes e_{1}\right),\;\;a,b\in\mathbb{R}.
\end{equation}
The canonical invariant connection $A^{\text{can}}_{\text{Id}}=d\;\text{Id}_{\text{SO}(3)}$ on $P_{\text{id}}$ takes the form:
\begin{equation}\nonumber
    A^{\text{can}}_{\text{Id}}=\theta^{8}\otimes e_{1}+\theta^{9}\otimes e_{2}+\theta^{10}\otimes e_{3}.
\end{equation}
It is not flat. Its curvature is given by:
\begin{align}
    F_{A^{\text{can}}_{\text{Id}}}&=dA^{\text{can}}_{\text{Id}}+\frac{1}{2}\left[A^{\text{can}}_{\text{Id}}\wedge A^{\text{can}}_{\text{Id}}\right]\nonumber\\
    &=\left(\theta^{23}+\theta^{56}\right)\otimes e_{1} + \left(\theta^{24}+\theta^{57}\right)\otimes e_{2}+\left(\theta^{34}+\theta^{67}\right)\otimes e_{3}.\nonumber
\end{align}
The radial component of an invariant tensorial $1$-form is an invariant section of the adjoint bundle. In this context, these objects correspond to fixed points of $\text{Ad}_{\text{SO(3)}}$. This representation has no fixed points, implying that all invariant connections are already in temporal gauge. Consequently, the space of invariant connections is given by:
\begin{align}
    &\mathcal{A}_{\text{inv}}\left(P_{1}\right)=\nonumber\\
    &\left\{A^{\text{can}}_{\text{Id}}+a(r)\left(\theta^{2}e_{3}-\theta^{3} e_{2}+\theta^{4}e_{1}\right)+b(r)\left(\theta^{5}e_{3}-\theta^{6}e_{2}+\theta^{7} e_{1}\right)|a,b\in C^{\infty}\left(0,\infty\right)\right\}.\label{InvConSO(3)Id}
\end{align}
Equivariant gauge transformations correspond to central elements of SO(3). Since SO(3) is centerless, the only possibility is the identity. Consequently, each invariant connection constitutes its own equivariant gauge equivalence class.

$\newline$
To compute the curvature of a general element $A=A^{\text{can}}_{\text{Id}}+\Lambda\in\mathcal{A}_{inv}(P_{\text{Id}})$,
we use the formula:
\begin{equation}\nonumber
F_{A}=F_{A^{\text{can}}_{\text{Id}}}+d_{A^{\text{can}}_{\text{Id}}}\Lambda+\frac{1}{2}\left[\Lambda\wedge\Lambda\right].
\end{equation}
Routine calculation yields:
\begin{align}
    d_{A^{\text{can}}_{\text{Id}}}\Lambda&=d\Lambda+\left[A^{\text{can}}_{\text{Id}}\wedge\Lambda\right]=\nonumber\\
    &\left(b\;\theta^{14}-a\;\theta^{17}\right)\otimes e_{1}+\left(a\;\theta^{16}-b\;\theta^{13}\right)\otimes e_{2}+\left(b\;\theta^{12}-a\;\theta^{15}\right)\otimes e_{3}\nonumber\\
    &+\frac{da}{dr}dr\wedge\left(\theta^{2}\otimes e_{3}-\theta^{3}\otimes e_{2}+\theta^{4}\otimes e_{1}\right)+\frac{db}{dr}dr\wedge\left(\theta^{5}\otimes e_{3}-\theta^{6}\otimes e_{2}+\theta^{7}\otimes e_{1}\right).
\end{align}
The final summand is also easily seen to take the form:
\begin{align}
    \frac{1}{2}\left[\Lambda\wedge\Lambda\right]=&\left(-a^{2}\;\theta^{23}-ab\;\theta^{26}+ab\; \theta^{35}-b^{2}\;\theta^{56}\right)\otimes e_{1}\nonumber\\
    &\left(-a^{2}\;\theta^{24}-ab\;\theta^{27}+ab\; \theta^{45}-b^{2}\;\theta^{57}\right)\otimes e_{2}\nonumber\\
    &\left(-a^{2}\;\theta^{34}-ab\;\theta^{37}+ab\; \theta^{46}-b^{2}\;\theta^{67}\right)\otimes e_{3}.
\end{align}
Overall, we obtain the following expression for the curvature:
\begin{align}\label{InvCurvSO(3)Id}
    F_{A}&=\\
    &\;\;\;\; \left((1-a^{2})\;\theta^{23}+(1-b^{2})\;\theta^{56}-ab\; \theta^{26}+ab\; \theta^{35}+b\;\theta^{14}-a\;\theta^{17}+\frac{da}{dr}dr\wedge\theta^{4}+\frac{db}{dr}dr\wedge\theta^{7}\right)\otimes e_{1}\nonumber\\
     &+\left((1-a^{2})\;\theta^{24}+(1-b^{2})\;\theta^{57}-ab\; \theta^{27}+ab\; \theta^{45}-b\;\theta^{13}+a\;\theta^{16}-\frac{da}{dr}dr\wedge\theta^{3}-\frac{db}{dr}dr\wedge\theta^{6}\right)\otimes e_{2}\nonumber\\
      &+\left((1-a^{2})\;\theta^{34}+(1-b^{2})\;\theta^{67}-ab\; \theta^{37}+ab\; \theta^{46}+b\;\theta^{12}-a\;\theta^{15}+\frac{da}{dr}dr\wedge\theta^{2}+\frac{db}{dr}dr\wedge\theta^{5}\right)\otimes e_{3}.\nonumber
\end{align}
The Ambrose-Singer holonomy theorem implies that all elements of $\mathcal{A}_{\text{inv}}\left(P_{\text{Id}}\right)$ are irreducible. Since gauge equivalent, irreducible, invariant connections are equivariantly gauge equivalent, the elements of $\mathcal{A}_{\text{inv}}\left(P_{\text{Id}}\right)$ all lie in distinct gauge equivalence classes.

\subsubsection{Invariant Connections on the Extended Bundles}
It remains to understand how to describe invariant connections on the extensions of $P_{1}$ and $P_{\text{Id}}$ over ${S^{4}}$. For $P_{1}$ this is easy. The unique extension is given by the trivial bundle. The canonical invariant connection is still equal to zero. It follows that the $\text{ad}(P_{1})$-valued forms \ref{InvConSO(3)triv} are still meaningful over the extended bundle and describe the relevant invariant connections with this choice of reference.

$\newline$
The situation is slightly more subtle for $P_{\text{Id}}$. The canonical invariant connection of $P_{\text{Id}}$ disagrees with those of $P_{\pi_{1}}$ and $P_{\pi_{2}}$. In fact, $A^{\text{can}}_{\text{Id}}$ does not even extend to a connection on either of these bundles. To see this, we compute the canonical invariant connections of $P_{\pi_{1}}$, $P_{\pi_{2}}$. These are given by:
\begin{align}
    A_{\pi_{1}}^{\text{can}}&=d\pi_{1}\nonumber\\
    &=\left(\theta^{8}+\theta^{7}\right)\otimes e_{1}+\left(\theta^{9}-\theta^{6}\right)\otimes e_{2}+\left(\theta^{10}+\theta^{5}\right)\otimes e_{3}\nonumber\\
    &=A_{\text{Id}}^{\text{can}}+\left(\theta^{5}\otimes e_{3}-\theta^{6}\otimes e_{2}+\theta^{7}\otimes e_{1}\right),\label{CanonConPi1}
\end{align}
\begin{align}
    A_{\pi_{2}}^{\text{can}}&=d\pi_{2}\nonumber\\
    &=\left(\theta^{8}-\theta^{7}\right)\otimes e_{1}+\left(\theta^{9}+\theta^{6}\right)\otimes e_{2}+\left(\theta^{10}-\theta^{5}\right)\otimes e_{3}\nonumber\\
    &=A_{\text{Id}}^{\text{can}}-\left(\theta^{5}\otimes e_{3}-\theta^{6}\otimes e_{2}+\theta^{7}\otimes e_{1}\right).\label{CanonConPi2}
\end{align}
Let $A\in \mathcal{A}_{\text{inv}}(P_{\text{Id}})$ be an invariant connection over $X^{8}-S^{4}$. Then:
\begin{equation}
    A=A^{\text{can}}_{\text{Id}}+a(r)\left(\theta^{2}\otimes e_{3}-\theta^{3}\otimes  e_{2}+\theta^{4}\otimes e_{1}\right)+b(r)\left(\theta^{5}\otimes e_{3}-\theta^{6}\otimes e_{2}+\theta^{7}\otimes  e_{1}\right).
\end{equation}
We rewrite it using $A_{\pi_{1}}^{\text{can}}$ and $A_{\pi_{2}}^{\text{can}}$ as the reference. We obtain:
\begin{equation}\label{Pi1InvCon}
    A=A^{\text{can}}_{\pi_{1}}+a(r)\left(\theta^{2}\otimes e_{3}-\theta^{3}\otimes  e_{2}+\theta^{4}\otimes e_{1}\right)+\left(b(r)-1\right)\left(\theta^{5}\otimes e_{3}-\theta^{6}\otimes e_{2}+\theta^{7}\otimes  e_{1}\right),
\end{equation}
\begin{equation}\label{Pi2InvCon}
    A=A^{\text{can}}_{\pi_{2}}+a(r)\left(\theta^{2}\otimes e_{3}-\theta^{3}\otimes  e_{2}+\theta^{4}\otimes e_{1}\right)+\left(b(r)+1\right)\left(\theta^{5}\otimes e_{3}-\theta^{6}\otimes e_{2}+\theta^{7}\otimes  e_{1}\right).
\end{equation}
The forms $\theta^{5}$, $\theta^{6}$ and $\theta^{7}$ blow up as $r\to 1$. We conclude that a necessary condition for $A$ to extend to $P_{\pi_{1}}$ is:
\begin{equation}\label{ContPi1}
    \lim_{r\to 1}b(r)=1.
\end{equation}
Similarly, if $A$ extends to $P_{\pi_{2}}$ we have:
\begin{equation}\label{ContPi2}
    \lim_{r\to 1}b(r)=-1.
\end{equation}
The connection $A^{\text{can}}_{\text{Id}}$ corresponds to $a=b=0$. Both conditions (\ref{ContPi1}) and (\ref{ContPi2}) fail. Consequently $A^{\text{can}}_{\text{Id}}$ does not extend to either $P_{\pi_{1}}$ or $P_{\pi_{2}}$.
\subsection{The SO(5)-Invariant ODEs on \texorpdfstring{$P_{1}$}{text}: Derivation and Explicit Solution}
We proceed to study the Spin(7) instanton and Hermitian-Yang Mills equations on the bundle $P_{1}$. A general invariant connection $A$ is equivariantly gauge equivalent to one of the form (\ref{InvConSO(3)triv}). The associated curvature tensor $F_{A}$ is given by (\ref{CurvSO(3)Triv}). These expressions are manifestly similar to (\ref{U(1)InvConnection}) and (\ref{U(1)InvCurvature}). An identical computation to the one carried out in the abelian case gives:
\begin{equation}\nonumber
F_{A}\wedge\Omega=0,\nonumber
\end{equation}
\begin{equation}\nonumber
F_{A}\wedge\star\omega=-\left(Q^{3}\frac{da^{i}}{dr}+3PQ^{2}a^{i}(r)\right)dr\wedge\theta^{1234567}\otimes e_{i}\nonumber.
\end{equation}
Consequently, the invariant Hermitian Yang-Mils equations take the form:
\begin{equation}\label{HYMODEP1}
\frac{da^{i}}{dr}=-3\frac{P}{Q}a^{i}.
\end{equation}
Using (\ref{CayleyCalib}) and computing as in the abelian case we obtain:
\begin{equation}\nonumber
\Phi\wedge F_{A}=-\left[3Q^{2}a^{i}(r)\theta^{234567}+\left(Q^{2}\frac{da^{i}}{dr}+2PQ\alpha(r)\right)dr\wedge\left(\theta^{13467}+\theta^{12457}+\theta^{12356}\right)\right]\otimes e_{i},
\end{equation}
\begin{equation}\nonumber
\star_{g}F_{A}=-\left[\frac{Q^{3}}{P}\frac{da^{i}}{dr}\theta^{234567}+PQa^{i} dr\wedge\left(\theta^{13467}+\theta^{12457}+\theta^{12356}\right)\right]\otimes e_{i}.
\end{equation}
Equating these expressions yields the invariant Spin(7) instanton ODEs. They are identical to (\ref{HYMODEP1}).  We thus obtain the following local equivalence result:
\begin{theorem}
An \emph{SO}(5)-invariant \emph{SO}(3)-connection $A\in\mathcal{A_{\text{inv}}}\left(P_{1}\right)$ over $T^{\star}S^{4}-S^{4}$ equipped with the Stenzel Calabi-Yau structure is a \emph{Spin(7)} instanton if and only if it is Hermitian-Yang-Mills.
\end{theorem}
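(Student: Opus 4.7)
My plan is to reduce the nonabelian equations to three decoupled copies of the abelian equations that were already analysed in section $2$, exploiting the fact that on the trivial bundle $P_{1}$ the $\mathrm{ad}(P_{1})$-valued curvature splits diagonally over the standard basis $\{e_{1},e_{2},e_{3}\}$ of $\mathfrak{so}(3)$. The first move is to invoke the equivariant temporal gauge discussed around (\ref{InvConSO(3)triv}) to assume without loss of generality that $A=\theta^{1}\otimes(a^{i}e_{i})$. Its curvature (\ref{CurvSO(3)Triv}) then has the form $F_{A}=\sum_{i=1}^{3}F^{i}\otimes e_{i}$ with
\begin{equation}
F^{i}=\frac{da^{i}}{dr}\,dr\wedge\theta^{1}+a^{i}\left(\theta^{25}+\theta^{36}+\theta^{47}\right),
\end{equation}
which is structurally identical to the abelian curvature (\ref{U(1)InvCurvature}), with the real coefficient $a^{i}(r)$ replacing the imaginary $i\alpha(r)$.

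Next, I would observe that the forms $\Phi$, $\omega$, $\Omega$ and the Hodge operator $\star_{g}$ are all scalar-valued, so each of the operations appearing in the Spin(7) instanton equation (\ref{Spin(7)InstantonEquation}) and in the HYM equations (\ref{HYMEquation1})--(\ref{HYMEquation2}) commutes with the decomposition $F_{A}=\sum_{i}F^{i}\otimes e_{i}$. Thus $F_{A}\wedge\Omega=\sum_{i}(F^{i}\wedge\Omega)\otimes e_{i}$, $F_{A}\wedge\star\omega=\sum_{i}(F^{i}\wedge\star\omega)\otimes e_{i}$, and similarly for $\Phi\wedge F_{A}$ and $\star_{g}F_{A}$. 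Since the $e_{i}$ are linearly independent, each $\mathfrak{so}(3)$-component must vanish or match independently, so both gauge-theoretic equations split into three scalar equations, one per index $i$.

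I would then carry out exactly the calculations already performed for the abelian case, component by component, replacing $i\alpha$ by $a^{i}$. The identical vanishing of $F^{i}\wedge\Omega$ follows from the computation underlying the abelian statement immediately after (\ref{HYMEquation2}); the identity $F^{i}\wedge\star\omega=-\bigl(Q^{3}\tfrac{da^{i}}{dr}+3PQ^{2}a^{i}\bigr)dr\wedge\theta^{1234567}$ follows from the calculation leading to (\ref{U(1)HYMODE}); and the computations of $\Phi\wedge F^{i}$ and $\star_{g}F^{i}$ mirror (\ref{HodgeStarCurvature}) and (\ref{PhiWedgeCurvature}). Assembling these gives the two ODE systems $\frac{da^{i}}{dr}=-3\frac{P}{Q}a^{i}$ and $\frac{da^{i}}{dr}=-3\frac{P}{Q}a^{i}$ respectively, which are manifestly identical. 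Since both equations are gauge invariant and every invariant connection is equivariantly gauge equivalent to one in temporal gauge, the equivalence extends to all of $\mathcal{A}_{\text{inv}}(P_{1})$.

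There is no real obstacle here; the only thing to be careful about is the bookkeeping that ensures the three $\mathfrak{so}(3)$ components genuinely decouple, which rests on $P_{1}$ being trivial so that $\mathrm{ad}(P_{1})$ is a trivial bundle of Lie algebras and the standard basis vectors $e_{i}$ are globally defined parallel sections. This is precisely the feature that fails for $P_{\mathrm{Id}}$, where (\ref{InvCurvSO(3)Id}) shows that the $e_{i}$-components of $F_{A}$ are coupled through the $a$, $b$ and $ab$ cross-terms, anticipating the genuinely nonabelian analysis of the later sections.
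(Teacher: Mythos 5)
Your proposal is correct and follows essentially the same route as the paper: reduce to temporal gauge, observe that the curvature (\ref{CurvSO(3)Triv}) splits diagonally over the $e_{i}$ into three copies of the abelian curvature (\ref{U(1)InvCurvature}), and repeat the section 2 computations componentwise to find that both the Spin(7) and HYM conditions reduce to the same ODE $\frac{da^{i}}{dr}=-3\frac{P}{Q}a^{i}$. Your closing remark correctly identifies why this decoupling is special to $P_{1}$ and fails for $P_{\text{Id}}$.
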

The ODE (\ref{HYMODEP1}) has already been studied in the context of the abelian equations. We immediately obtain an analogous existence/classification result:
\begin{theorem}\label{P1InstantonClassificationTheorem}
Let $M=X^{8}-S^{4}$ be equipped with the Stenzel Calabi-Yau structure.  There is a $3$-parameter family of invariant \emph{Spin(7)} instantons $A_{K^{1},K^{2},K^{3}}\in\mathcal{A}_{\text{inv}}\left(P_{1}\right)$:
\begin{equation}
A_{K^{1},K^{2},K^{3}}=\frac{\theta^{1}}{(r^{2}+2)^{\frac{3}{4}}(r+1)^{\frac{3}{2}}(r-1)^{\frac{3}{2}}}\otimes K^{i}e_{i}\text{ where }K^{i}\in\mathbb{R}.\nonumber
\end{equation}
They are all HYM. Any invariant \emph{Spin(7)} instanton on $P_{1}$ is equivariantly gauge equivalent to some element of this family.

$\newline$
The curvature of $A_{K^{1},K^{2},K^{3}}$ is given by:
\begin{equation}
F_{A_{K^{1},K^{2},K^{3}}}=\left(-\frac{9}{2}\frac{r(r^{2}+1)}{(r^{2}+2)^{\frac{7}{4}}(r+1)^{\frac{5}{2}}(r-1)^{\frac{5}{2}}}dr\wedge\theta^{1}+\frac{\theta^{25}+\theta^{36}+\theta^{47}}{(r^{2}+2)^{\frac{3}{4}}(r+1)^{\frac{3}{2}}(r-1)^{\frac{3}{2}}}\right)\otimes K^{i}e_{i}.\nonumber
\end{equation}
\end{theorem}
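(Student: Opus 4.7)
The plan is to piggy-back on the analysis already carried out in the abelian section. The preceding theorem identifies the Spin(7) instanton equation with the HYM equation on $P_{1}$, so it suffices to classify SO(5)-invariant HYM connections. By the temporal-gauge reduction recorded around (\ref{InvConSO(3)triv}), every such connection is equivariantly gauge equivalent to one of the form $A = \theta^{1} \otimes a^{i}(r) e_{i}$. Because $P_{1}$ is the \emph{trivial} SO(3)-bundle, the canonical reference connection vanishes and the curvature formula (\ref{CurvSO(3)Triv}) contains no $[\,\cdot\,,\,\cdot\,]$-quadratic term; consequently the HYM equation (\ref{HYMODEP1}) decouples into three independent scalar ODEs, one per component $a^{i}$.

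First, I would observe that each component satisfies the scalar ODE
\begin{equation}\nonumber
\frac{da^{i}}{dr} = -3\frac{P}{Q} a^{i},
\end{equation}
which is precisely (\ref{U(1)HYMODE}). Its general solution, already integrated in (\ref{U(1)sol}), is
\begin{equation}\nonumber
a^{i}(r) = \frac{K^{i}}{(r^{2}+2)^{\frac{3}{4}}(r+1)^{\frac{3}{2}}(r-1)^{\frac{3}{2}}}, \qquad K^{i} \in \mathbb{R}.
\end{equation}
Substituting back into the ansatz (\ref{InvConSO(3)triv}) produces the three-parameter family $A_{K^{1},K^{2},K^{3}}$ in the stated form, and the uniqueness clause of the Picard theorem (applied componentwise on the open interval $(1,\infty)$) shows that every solution of the system is of this shape; this furnishes both existence and the classification statement.

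Second, the HYM assertion is immediate from the local equivalence theorem invoked above. The explicit curvature formula is a direct transcription: substitute the closed-form $a^{i}(r)$ and its derivative (\ref{U(1)solDer}) into the invariant-curvature expression (\ref{CurvSO(3)Triv}), collecting terms by the basis $e_{i}$ of $\mathfrak{so}(3)$. The scalar prefactor of $dr \wedge \theta^{1}$ comes from $\frac{da^{i}}{dr}$ and the prefactor of $\theta^{25}+\theta^{36}+\theta^{47}$ from $a^{i}(r)$, exactly mirroring (\ref{U(1)InstantonCurv}) of Theorem \ref{U(1)P1InstantonClassificationTheorem}.

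There is no genuine obstacle: the entire statement is a direct Lie-algebra-valued generalization of the abelian result, the only conceptual point being the decoupling of components, which rests solely on the vanishing of the quadratic term in the curvature formula for the trivial bundle. (Note that distinct triples $(K^{1},K^{2},K^{3})$ related by a fixed element of SO(3) acting diagonally do yield gauge equivalent connections, consistent with the remark after (\ref{InvConSO(3)triv}); the theorem only claims that every invariant Spin(7) instanton is \emph{represented} in the family, not that the parametrization is injective.)
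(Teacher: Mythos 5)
Your proposal is correct and follows essentially the same route as the paper: reduce to the decoupled scalar ODE $\frac{da^{i}}{dr}=-3\frac{P}{Q}a^{i}$, which is exactly the abelian equation (\ref{U(1)HYMODE}) already integrated in (\ref{U(1)sol}), invoke Picard uniqueness for the classification, and substitute into (\ref{CurvSO(3)Triv}) for the curvature. The only cosmetic quibble is that the vanishing of the quadratic term in (\ref{CurvSO(3)Triv}) is due to $\theta^{1}\wedge\theta^{1}=0$ in $\left[\Lambda\wedge\Lambda\right]$ rather than solely to the triviality of the canonical connection, but this does not affect the argument.
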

The Ambrose-Singer holonomy theorem implies that all the (non-zero) instantons of theorem \ref{P1InstantonClassificationTheorem} are reducible. In fact, they have holonomy U(1). Let $A\neq 0$ be one of them. Let $Q\subset P_{1}$ denote the trivial subbundle with fiber U(1) obtained by exponentiating a nonzero vector in the holonomy algebra. $A$ restricts to an irreducible connection on $Q$. The resulting instanton is one of those promised by theorem \ref{U(1)P1InstantonClassificationTheorem}. This makes rigorous the apparent similarities with the situation in section $2$.

$\newline$
Unless $K^{1}=K^{2}=K^{3}=0$, the curvature norm of $A_{K^{1},K^{2},K^{3}}$ is unbounded as $r\to 1$. We thus obtain the following global non-existence result:
\begin{theorem}\label{NonexistenceTrivialBundle}
There are no global \emph{SO(5)}-invariant \emph{Spin(7)} instantons/ HYM connections on the trivial \emph{SO(3)}- bundle $P_{1}$ over $X^{8}$ -apart from the trivial connection A=0.
\end{theorem}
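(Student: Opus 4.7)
The plan is to reduce the global nonexistence claim to the local classification just obtained and then import the blowup estimate from the abelian setting. First, given any smooth \emph{SO(5)}-invariant \emph{Spin(7)} instanton (or HYM connection) $A$ on the trivial bundle $P_{1}$ over all of $X^{8}$, I would restrict it to $X^{8}-S^{4}$. By Theorem \ref{P1InstantonClassificationTheorem}, there exists an equivariant gauge transformation on $X^{8}-S^{4}$ bringing $A$ into the standard form $A_{K^{1},K^{2},K^{3}}$ for some real constants $K^{i}$.

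The second step is to compute the pointwise curvature norm of $A_{K^{1},K^{2},K^{3}}$ as $r\to 1$. The explicit expression for $F_{A_{K^{1},K^{2},K^{3}}}$ given in Theorem \ref{P1InstantonClassificationTheorem} is manifestly the tensor product of the abelian curvature two-form (with $iK$ replaced by $1$) and the Lie algebra element $K^{i}e_{i}$. Choosing $\{e_{i}\}$ orthonormal in $\mathfrak{so}(3)$, the norm factorises and the computation of the two-form factor is identical to the one producing (\ref{AbelianBlowupRate}). Feeding in (\ref{X1lengthCY}), (\ref{RadiallengthCY}), (\ref{X234lengthCY}) and (\ref{X567lengthCY}) for the norms of the framing vectors, I would obtain
\begin{equation}\nonumber
\norm{F_{A_{K^{1},K^{2},K^{3}}}}_{g}^{2}=\left(\sum_{i=1}^{3}(K^{i})^{2}\right)O\left(|r-1|^{-4}\right)\text{ as }r\to 1.
\end{equation}
In particular, if any $K^{i}$ is nonzero, the curvature norm diverges at the singular orbit.

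The conclusion follows from gauge invariance of $\norm{F_{A}}_{g}^{2}$: this quantity is unaffected by the equivariant gauge transformation of step one, so any smooth extension of $A$ across $S^{4}$ must have a continuous, hence locally bounded, curvature norm on a compact neighbourhood of the singular orbit. This is incompatible with the divergence above unless $K^{1}=K^{2}=K^{3}=0$, forcing $A=0$ on $X^{8}-S^{4}$ and therefore, by continuity, $A=0$ globally. There is no real obstacle here beyond a bookkeeping check that the equivariant gauge transformation of step one, defined only off the singular orbit, does not interfere with the argument; this is automatic because the entire reasoning runs through the gauge-invariant quantity $\norm{F_{A}}_{g}^{2}$.
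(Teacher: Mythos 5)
Your proposal is correct and follows essentially the same route as the paper: restrict to $X^{8}-S^{4}$, invoke the classification of Theorem \ref{P1InstantonClassificationTheorem} to put the connection in the form $A_{K^{1},K^{2},K^{3}}$, observe that the gauge-invariant quantity $\norm{F_{A}}_{g}^{2}$ blows up like $O\left(|r-1|^{-4}\right)$ unless all $K^{i}$ vanish, and conclude that no smooth extension across the singular orbit is possible since the Stenzel metric is smooth there. The factorisation of the curvature norm into the abelian two-form factor times $\left|K^{i}e_{i}\right|^{2}$ and the remark that gauge invariance makes the off-orbit gauge transformation harmless are exactly the (implicit) content of the paper's one-line argument.
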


\subsection{Extendibility of Connections Across the Singular Orbit}
It remains to study the equations on $P_{\text{Id}}$. This is the content of section \ref{FinalSection}. As we shall see, $P_{\text{Id}}$ admits solutions that are well behaved near $S^{4}$. As a preliminary step, we need to understand when a general $A\in\mathcal{A}_{\text{inv}}\left(P_{\text{Id}}\right)$ arises as the restriction of a global connection (either on $P_{\pi_{1}}$ or $P_{\pi_{2}}$). Our task is to formulate the criterion of Eschenburg and Wang (Eschenburg-Wang \cite{Esch}) in the context of gauge theory on $X^{8}$.
\subsubsection{Extendibility of Tensorial Forms: the Criterion of Eschenburg and Wang}
Let $S$ be a Lie group and let $\mu$ be a Lie group homomorphism:
\begin{equation}\nonumber
\mu:\text{SO}(4)\to S.
\end{equation}
Denote by $\lambda$ the restriction of $\mu$ to the bottom right copy of SO(3). Let $P$ be the cohomogeneity one principal $S$-bundle over $X^{8}$ whose restrictions over each orbit $\mathcal{O}_{r}$ ($r\ge 1$) are given by:
\[ 
P_{|_{\mathcal{O}_{r}}}= \left\{
\begin{array}{ll}
      \text{SO}(5)\times_{\left(\text{SO}(4),\mu\right)}S\text{ if }r=1\\
      \text{SO}(5)\times_{\left(\text{SO}(3),\lambda\right)}S\text{ if }r>1.
\end{array} 
\right. 
\]
Let $(V,\rho)$ be a representation of the group $S$. We can form the associated vector bundle over $X^{8}$:
\begin{equation}\nonumber
    \rho\left(P\right)\defeq P\times_{\rho}V.
\end{equation}
We consider the problem of extending SO(5)-invariant $\rho\left(P\right)$-valued k-forms across the singular orbit $S^{4}$.

$\newline$
Eschenburg and Wang give necessary and sufficient conditions for extending invariant linear tensors across the singular orbit of a cohomogeneity one space. Since we are interested in bundle-valued forms, their technique does not apply directly. We resolve this issue by passing to the total space $P$ and working with $V$-valued forms instead. In this section we set up the requisite framework to implement this idea.

$\newline$
The manifold $P$ is a cohomogeneity one space for the group $\text{SO(5)}\times S$. Its principal orbits are isomorphic to $P_{\lambda}$ and its singular orbit is the bundle $P_{\mu}$.

$\newline$
Define the reference points:
\[ 
x_{r}\defeq \left\{
\begin{array}{ll}
      [1,1]\in P_{\mu}\text{ if }r=1\\
      \left[1,1\right]\in P_{\lambda}\text{ if }r>1.
\end{array} 
\right. 
\]
With this definition, the point $x_{r}$ lies in the fiber above $p_{r}\in \mathcal{O}_{r}$ for all $r\ge 1$.

$\newline$
Using these reference points, the isotropy subgroups corresponding to the principal and singular orbits are respectively given by:
\begin{equation}\label{PStabPrincipal}
\text{Stab}\left(x_{r}\right)=\left\{(h,\lambda(h))\in\text{SO(5)}\times S \text{ such that }h\in \text{SO}(3)\right\}\cong\text{SO}(3),
\end{equation}
\begin{equation}\label{PStabSingular}
\text{Stab}\left(x_{1}\right)=\left\{(h,\mu(h))\in \text{SO(5)}\times S \text{ such that }h\in \text{SO}(4)\right\}\cong \text{SO}(4).
\end{equation}
In formulae (\ref{PStabPrincipal}) and (\ref{PStabSingular}), SO(4) and SO(3) denote the bottom right inclusions of these groups in SO(5). In what follows, when we consider the action of SO(4) on $P$, it will be through its embedding in $\text{SO}(5)\times S$ as the singular isotropy group (\ref{PStabSingular}).

$\newline$
Let $\omega$ be an invariant, tensorial form of type $\rho$. Its extendibility can be decided by studying the restriction $\omega_{|_{W}}$ along a particularly simple embedded submanifold $W\subset P$. This will make the problem tractable. Let $W$ be the union of the SO(4)-orbits of all points $x_{r}$ in $P$:
\begin{equation}\label{Wdfn2}
W\defeq\bigcup_{r\ge 1}\text{Stab}\left(x_{1}\right)\;x_{r}.
\end{equation}
This is a 4-dimensional linear SO(4)-representation. The SO(4)-action is obvious. The linear structure is inherited from $T^{\star}S^{4}_{p_{1}}$ through the projection map:
\begin{equation}\nonumber
\pi:P\twoheadrightarrow X^{8}.
\end{equation}
In particular, $\pi$ restricts to a diffeomorphism:
\begin{equation}\label{PiWdiffeo} 
  \pi:W\xrightarrow{\sim}T^{\star}_{p_{1}}S^{4}\subset X^{8}.
\end{equation}
The latter is a smoothly embedded submanifold of $X^{8}$ stable under the action of SO(4). The equivariance of $\pi$, implies that W and $T^{\star}_{p_{1}}S^{4}$ are isomorphic SO(4)-representations. Since $T^{\star}_{p_{1}}S^{4}$ is a vector space, it can be naturally identified with the tangent space at its origin (e.g. by the exponential map of the underlying additive group). Endowing the latter with the isotropy action, this identification becomes equivariant. These considerations allow us to view W as the vector representation of SO(4):
 \begin{equation}\label{WIdentified}
 W\cong\left<\partial_{y^{2}},\partial_{y^{3}},\partial_{y^{4}},\partial_{y^{5}}\right>.
\end{equation}
The extendibility problem for invariant tensors is addressed by examining their restrictions along $W$. We are thus interested in finding a useful way to describe these restrictions. Pull the bundle
$$\Lambda^{k}T^{\star}P\otimes \underline{V}$$
back to $W$ using the inclusion map. Since $W$ is linear, the pullback is trivial. We will now give a particular trivialization that elucidates the action of SO(4). Using the canonical invariant connection to decompose $TP$ into vertical and horizontal distributions, we obtain an equivariant identification:
\begin{equation}\label{PrincipalBundleTangentDecomp}
TP\cong \pi^{\star}TX^{8}\oplus\underline{\mathfrak{s}}.
\end{equation}
Furthermore, there is an obvious SO(4)-equivariant trivialization:
\begin{equation}\label{VectorBundleTangentDecomp}
    TX^{8}_{|_{\pi(W)}}\cong \pi(W)\times\left(\left<X_{1},...,X_{4}\right>\oplus \left<\partial_{y^{2}},\partial_{y^{3}},\partial_{y^{4}},\partial_{y^{5}}\right>\right).
\end{equation}
Putting these together we have:
\begin{equation}
\left(\Lambda^{k}T^{\star}P\otimes \underline{V}\right)_{|_{W}}\cong W\times\left(\Lambda^{k}\left<X_{1},...,X_{4}\right>^{\star}\otimes V\oplus \Lambda^{k}\left<\partial_{y^{2}},\partial_{y^{3}},\partial_{y^{4}},\partial_{y^{5}}\right>^{\star}\otimes V \oplus\Lambda^{k}\mathfrak{s}^{\star}\otimes V\right).\nonumber
\end{equation}
Here, the action of SO(4) is as follows: The action on $\mathfrak{s}$ is trivial. The action on $V$ is obtained by composing $\mu$ and $\rho$. Finally, the brackets $\left<X_{1},...,X_{4}\right>^{\star}$ and $\left<\partial_{y^{2}},\partial_{y^{3}},\partial_{y^{4}},\partial_{y^{5}}\right>^{\star}$ are vector representations.

$\newline$
We study the restriction of $\omega$ along $W_{0}$: the vector space $W$ punctured at its origin
\begin{equation}\nonumber
W_{0}\defeq W-\left\{x_{1}\right\}.
\end{equation}
Since tensorial forms vanish on vertical vectors, $\omega_{|_{W_{0}}}$ is a section of the trivial bundle with fiber equal to:
\begin{equation}\nonumber
    E\defeq\Lambda^{k}\left<X_{1},...,X_{4}\right>^{\star}\otimes V\oplus \Lambda^{k}\left<\partial_{y^{2}},\partial_{y^{3}},\partial_{y^{4}},\partial_{y^{5}}\right>^{\star}\otimes V.
\end{equation}

Due to the triviality of the bundle, the form $\omega|_{W_{0}}$ amounts to an SO(4)-equivariant function:
\begin{equation}\nonumber
    f:W_{0} \to E.
\end{equation}
The invariance of $\omega$ implies that no information is lost in passing from $\omega$ to $f$. In fact, $\omega$ is determined by the values of $f$ on the reference points $x_{r}$ forming a ray from the origin of $W$ to infinity. This recovers our usual description of invariant forms as curves in a group representation:
\begin{equation}\label{UsualFormforExtension1}
    \omega_{r}:(1,\infty)\to E.
\end{equation}
Eschenburg and Wang prove that the extendibility of $\omega$ is contingent to a representation-theoretic condition on the formal Taylor series expansion of an appropriate reparameterization of $\omega_{r}$. This condition reflects the behaviour of $f$ near $x_{1}$.

$\newline$
The requisite reparameterization is obtained as follows. Using (\ref{WIdentified}), the Euclidean metric on $\mathbb{R}^{10}$ induces an inner product on $W$. We consider the radial function of the associated norm. Concretely, we set:
\begin{equation}\nonumber
    t\defeq R_{-}=\left(\frac{r^{2}-1}{2}\right)^{\frac{1}{2}},\;\;\;r(t)=\left(2t^{2}+1\right)^{\frac{1}{2}}.
\end{equation}
We thus obtain a curve:
\begin{equation}\nonumber
    \gamma(t)\defeq \omega_{r(t)}.
\end{equation}
It is clear that $f$ determines $\gamma$ and vice-versa:
\begin{equation}\nonumber
\gamma(t)=f\left(x_{r(t)}\right).
\end{equation}
The result of Eschenburg and Wang (Eschenburg-Wang \cite{Esch}, Lemma 1.1, p.113) asserts that $\omega$ extends smoothly over the singular orbit if and only if the following hold:
\begin{itemize}
  \item The curve $\gamma$ is smooth from the right at $t=0$
  \item The formal Taylor series of $\gamma$ at $t=0$ can be written as:
  \begin{equation}\nonumber
      \gamma\sim\sum_{k\ge 0}u_{k}\left(x_{|_{t=1}}\right)t^{k}
  \end{equation}
where:
\begin{equation}\nonumber
u_{k}:W\to E
\end{equation}
is a homogeneous equivariant polynomial of degree $k$.
\end{itemize}
Note that we have provided explicit descriptions of the SO(4)-actions on $W$ and $E$. These descriptions facilitate the computations required for applications.
\subsubsection{Application: Extendibility of Connections}
We are interested in studying the extendibility of tensorial forms $\omega$ describing connections on $P$ (relative to the canonical invariant connection). Therefore -in the context of our application- we have:
\begin{equation}\nonumber
    S=\text{SO}(3),\;V=\mathfrak{so}(3)\;,\rho=\text{Ad}_{\text{SO}(3)},\; k=1.
\end{equation}
Given our setup, $\omega$ will usually be available in the form (\ref{UsualFormforExtension1}). Given this data, we need to pass to the associated curve $\gamma(t)$ and express it in a basis of $E$ coming from evaluation of homogeneous equivariant polynomials at $x_{|_{t=1}}\in W$. To achieve this, we need to be able to find appropriate equivariant polynomials. This task can be simplified if we understand the relevant representations in terms of quaternions. To this end, we identify the spaces $W$ and $\left<X_{1},...,X_{4}\right>$ with $\mathbb{H}$ by:
\begin{equation}\nonumber
\left<X^{1},X^{2},X^{3},X^{4}\right>\cong\left<1,i,j,k\right>\cong\left<\partial_{y^{2}},\partial_{y^{3}},\partial_{y^{4}},\partial_{y^{5}}\right>.
\end{equation}
Furthermore, we lift the action of SO(4) to $\text{Sp}(1)^{2}$ using the covering map $\pi_{\text{Spin}(4)}$. Under these identifications, the SO(4)-action is captured by the usual spinor representation of $\text{Sp}(1)^{2}$ on $\mathbb{H}$.

$\newline$
General points $p\in W$ and $q\in \left<X_{1},...,X_{4}\right>$ can be written as:
\begin{align}
    p&=p^{0}X_{1}+p^{1}X_{2}+p^{2}X_{3}+p^{3}X_{4},\;\;\;\;\;\;\;\;q=q^{0}\partial_{y^{2}}+q^{1}\partial_{y^{3}}+q^{2}\partial_{y^{4}}+q^{3}\partial_{y^{5}}\nonumber\\
    &=p^{0}+p^{1}i+p^{2}j+p^{3}k\;\;\;\;\;\;\;\;\;\;\;\;\;\;\;\;\;\;\;\;\;\;\;\;\;=q^{0}+q^{1}i+q^{2}j+q^{3}k.\nonumber
\end{align}
With this choice of coordinates we have:
\begin{equation}\nonumber
    x_{|_{t=1}}=1\in\text{Sp}(1)\subset\mathbb{H}.
\end{equation}
The Lie algebra $\mathfrak{so}(3)$ can be naturally identified with $\mathfrak{sp}(1)=\mathfrak{Im}\left(\mathbb{H}\right)$ using the differential of the covering map $\pi_{\text{Spin}(3)}$. This identification is Ad-equivariant. Explicitly, it takes the following form:
\begin{equation}\nonumber
    \left<e_{1},e_{2},e_{3}\right>\cong\left<-\frac{k}{2},\frac{j}{2},-\frac{i}{2}\right>.
\end{equation}
These considerations demonstrate that we require homogeneous $\text{Sp}(1)^{2}$-equivariant polynomials:
\begin{equation}\nonumber
    u:\mathbb{H}\to\mathbb{H}^{\star}\otimes \mathfrak{Im}\left(\mathbb{H}\right)\oplus\mathbb{H}^{\star}\otimes \mathfrak{Im}\left(\mathbb{H}\right)
\end{equation}
with prescribed value at $x=1$. Separating the components in the target, such maps take the form:
\begin{equation}\nonumber
    u\left(x\right)\left(p,q\right)=u_{1}\left(x\right)\left(p\right)+u_{2}\left(x\right)\left(q\right).
\end{equation}
The $\text{Sp}(1)^{2}$-equivariance condition for $u:W\to E$ translates to the following:
\begin{equation}\label{EquiCon1}
u_{1}\left(ax\overline{b}\right)\left(p\right)=\text{Ad}_{\mu\circ\pi_{\text{Spin}(4)}\left(a,b\right)}u_{1}\left(x\right)\left(\overline{a}pb\right)\text{ for all }\left(a,b\right)\in\text{Sp}(1)^{2},
\end{equation}
\begin{equation}\label{EquiCon2}
u_{2}\left(ax\overline{b}\right)\left(q\right)=\text{Ad}_{\mu\circ\pi_{\text{Spin}(4)}\left(a,b\right)}u_{2}\left(x\right)\left(\overline{a}qb\right)\text{ for all }\left(a,b\right)\in\text{Sp}(1)^{2}.
\end{equation}
\subsubsection*{The Case of $P_{\pi_{1}}$}
In this case $\mu=\pi_{1}$. The action of $\text{Sp}(1)^{2}$ on $\mathfrak{Im}\left(\mathbb{H}\right)$ is given by projecting the group element to the first factor and conjugating by the result. Conditions (\ref{EquiCon1}), (\ref{EquiCon2}) become:
\begin{equation}\nonumber
u_{1}\left(ax\overline{b}\right)\left(p\right)=au_{1}\left(x\right)\left(\overline{a}pb\right)\overline{a}\text{ for all }\left(a,b\right)\in\text{Sp}(1)^{2},
\end{equation}
\begin{equation}\nonumber
u_{2}\left(ax\overline{b}\right)\left(q\right)=au_{2}\left(x\right)\left(\overline{a}qb\right)\overline{a}\text{ for all }\left(a,b\right)\in\text{Sp}(1)^{2}.
\end{equation}

$\newline$
Using (\ref{Pi1InvCon}), we write a general invariant connection over $X^{8}-S^{4}$ as $A^{\text{can}}_{\pi_{1}}+\omega$, where:
\begin{equation}\nonumber
    \omega=a(r(t))\left(\theta^{2}\otimes e_{3}-\theta^{3}\otimes  e_{2}+\theta^{4}\otimes e_{1}\right)+\left(b(r(t))-1\right)\left(\theta^{5}\otimes e_{3}-\theta^{6}\otimes e_{2}+\theta^{7}\otimes  e_{1}\right).
\end{equation}
The form $\theta^{2}\otimes e_{3}-\theta^{3}\otimes  e_{2}+\theta^{4}\otimes e_{1}$ corresponds to:
\begin{align}
\left(p,q\right)&\mapsto -\frac{1}{2}\left(p^{1}i+p^{2}j+p^{3}k\right)\nonumber\\
&=-\frac{1}{2}\mathfrak{Im}(p)\nonumber\\
&=\frac{\left<p,1\right>-p}{2}.\nonumber
\end{align}
Similarly, the form $\theta^{5}\otimes e_{3}-\theta^{6}\otimes e_{2}+\theta^{7}\otimes  e_{1}$ corresponds to:
\begin{align}
\left(p,q\right)&\mapsto-\frac{1}{2t}\left(q^{1}i+q^{2}j+q^{3}k\right)\nonumber\\
&=-\frac{1}{2t}\mathfrak{Im}(q)\nonumber\\
&=\frac{\left<q,1\right>-q}{2t}.\nonumber
\end{align}
Any equivariant polynomial $u$ satisfying:
\begin{equation}\label{Pi1EquivPolyAt1p}
u\left(1\right)\left(p,q\right)=\frac{\left<p,1\right>-p}{2}
\end{equation}
has the following restriction on $S^{3}\subset\mathbb{H}$:
\begin{equation}\label{Pi1EquivPolyp}
u\left(x\right)\left(p\right)=\frac{\left<x,p\right>-p\overline{x}}{2}.
\end{equation}
Similarly, any equivariant polynomial $v$ satisfying:
\begin{equation}\label{Pi1EquivPolyAt1q}
v\left(1\right)\left(p,q\right)=\frac{\left<q,1\right>-q}{2}
\end{equation}
has the following restriction on $S^{3}\subset\mathbb{H}$:
\begin{equation}\label{Pi1EquivPolyq}
v\left(x\right)\left(q\right)=\frac{\left<x,q\right>-q\overline{x}}{2}.
\end{equation}
As soon as $u$ and $v$ are specified on the unit sphere, they are extended on $\mathbb{H}$ by homogeneity. The extensions depend on the degree $d$, which is yet unspecified. Given $d$, we define:
\[ 
u_{d}\left(x\right)\left(p\right)\defeq \left\{
\begin{array}{ll}
      |x|^{d}u\left(\frac{x}{|x|}\right)\left(p\right)\text{ if }x\neq 0\\
      0 \text{ if }x=0,
\end{array} 
\right. 
\]
\[ 
v_{d}\left(x\right)\left(p\right)\defeq \left\{
\begin{array}{ll}
      |x|^{d}v\left(\frac{x}{|x|}\right)\left(p\right)\text{ if }x\neq 0\\
      0 \text{ if }x=0.
\end{array} 
\right. 
\]
The admissible choices of the value of $d$ are constrained. Clearly, we could take $d$ to be $1$. This would correspond to defining $u$ and $v$ by the formulae (\ref{Pi1EquivPolyp}) and (\ref{Pi1EquivPolyq}) on the whole of $\mathbb{H}$. Other choices can only arise by multiplying $u_{1}$ and $v_{1}$ by powers of the homogeneous degree $2$ polynomials $\norm{p}^{2}$ and $\norm{q}^{2}$ respectively. We conclude that for each positive integer:
\begin{equation}
    d=1+2k
\end{equation}
we have precisely one homogeneous equivariant polynomial $u_{d}$ of degree $d$ satisfying (\ref{Pi1EquivPolyAt1p}) and precisely one homogeneous equivariant polynomial $v_{d}$ of degree $d$ satisfying (\ref{Pi1EquivPolyAt1q}).

$\newline$
We rewrite the form $\omega$ as:
\begin{equation}
\omega= a(r(t))\;u_{d}\left(1\right)+\frac{b(r(t))-1}{t}\;v_{d}\left(1\right).
\end{equation}
Applying the criterion of Eschenburg and Wang we obtain:
\begin{proposition}\label{PPi1ExtendCondition}
Let $A\in\mathcal{A}_{\text{inv}}\left(P_{\text{Id}}\right)$ be an invariant connection defined over $X^{8}-S^{4}$. Let
\begin{equation}\nonumber
\omega=a(r(t))\left(\theta^{2}\otimes e_{3}-\theta^{3}\otimes  e_{2}+\theta^{4}\otimes e_{1}\right)+\left(b(r(t))-1\right)\left(\theta^{5}\otimes e_{3}-\theta^{6}\otimes e_{2}+\theta^{7}\otimes  e_{1}\right)
\end{equation}
be the tensorial form expressing $A$ with respect to the canonical invariant connection of $P_{\pi_{1}}$. Then $A$ extends over the singular orbit on $P_{\pi_{1}}$ if and only if the following hold:
\begin{itemize}
    \item The function $a(r(t))$ is smooth from the right at $t=0$, odd and $O\left(t\right)$.
    \item The function $b(r(t))-1$ is smooth from the right at $t=0$, even and $O\left(t^{2}\right)$.
\end{itemize}
\end{proposition}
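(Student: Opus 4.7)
The plan is a direct application of the Eschenburg-Wang criterion set up in the preceding subsection; all the representation-theoretic groundwork has already been laid. The first step is to identify the curve $\gamma(t)=f(x_{r(t)})\in E$ associated to $\omega|_{W_{0}}$. Using the quaternionic identifications $\langle X_{1},\ldots,X_{4}\rangle\cong\mathbb{H}\cong W$ and the computations already carried out for the forms $\theta^{2}\otimes e_{3}-\theta^{3}\otimes e_{2}+\theta^{4}\otimes e_{1}$ and $\theta^{5}\otimes e_{3}-\theta^{6}\otimes e_{2}+\theta^{7}\otimes e_{1}$, the latter evaluate at $x_{r}$ to $u(1)$ and $t^{-1}v(1)$ respectively, so that
\begin{equation*}
\gamma(t)=a(r(t))\,u(1)+\frac{b(r(t))-1}{t}\,v(1).
\end{equation*}

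Next I would invoke the classification of $\mathrm{Sp}(1)^{2}$-equivariant homogeneous polynomial maps on $\mathbb{H}$ already established: in each summand of $E$ the space of such polynomials is one-dimensional in each odd degree $d=1+2k$ and trivial in every even degree. The crucial observation is that every admissible polynomial takes the same value at $x=1$ (namely $u(1)$ or $v(1)$), because the multiplying factor $|x|^{2k}$ evaluates to $1$ there. Consequently, the Eschenburg-Wang expansion $\gamma\sim\sum_{k\ge 0}u_{k}(1)\,t^{k}$ forces the coefficient of every even power of $t$ in $\gamma(t)$ to vanish, and the conditions decouple across the two summands of $E$ because $u(1)$ and $v(1)$ are linearly independent.

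Translating this decoupled constraint back to the scalar functions yields the claim. For the $u(1)$-component, smoothness from the right of $a(r(t))$ together with oddness automatically forces $a(r(t))=O(t)$. For the $v(1)$-component, smoothness from the right of the quotient $(b(r(t))-1)/t$ is equivalent to $b(r(t))-1$ being smooth from the right and $O(t)$, while oddness of $(b(r(t))-1)/t$ is equivalent to evenness of $b(r(t))-1$; combined, these upgrade to $b(r(t))-1=O(t^{2})$. No serious obstacle remains: the only computation not already present in the text is the identification of $\gamma(t)$ with $a(r(t))\,u(1)+t^{-1}(b(r(t))-1)\,v(1)$, which is a direct substitution, so the argument is essentially bookkeeping once the polynomial classification and the form-to-polynomial dictionary are in hand.
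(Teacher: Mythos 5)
Your proposal is correct and follows essentially the same route as the paper: identify $\gamma(t)=a(r(t))\,u_{d}(1)+t^{-1}\left(b(r(t))-1\right)v_{d}(1)$, use the fact that the homogeneous equivariant polynomials with the prescribed values at $x=1$ exist only in odd degrees $d=1+2k$, and read off the parity and vanishing-order conditions from the Eschenburg--Wang criterion. The paper leaves this translation implicit after the displayed rewriting of $\omega$, so your explicit bookkeeping of the decoupling across the two summands of $E$ and of the passage from oddness of $(b-1)/t$ to evenness and $O(t^{2})$ of $b-1$ is exactly the intended argument.
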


\subsubsection*{The Case of $P_{\pi_{2}}$}
In this case $\mu=\pi_{2}$. The action of $\text{Sp}(1)^{2}$ on $\mathfrak{Im}\left(\mathbb{H}\right)$ is given by projecting the group element to the second factor and conjugating by the result. Conditions (\ref{EquiCon1}), (\ref{EquiCon2}) become:
\begin{equation}\nonumber
u_{1}\left(ax\overline{b}\right)\left(p\right)=bu_{1}\left(x\right)\left(\overline{a}pb\right)\overline{b}\text{ for all }\left(a,b\right)\in\text{Sp}(1)^{2},
\end{equation}
\begin{equation}\nonumber
u_{2}\left(ax\overline{b}\right)\left(q\right)=bu_{2}\left(x\right)\left(\overline{a}qb\right)\overline{b}\text{ for all }\left(a,b\right)\in\text{Sp}(1)^{2}.
\end{equation}
Using (\ref{Pi2InvCon}), we write a general invariant connection over $X^{8}-S^{4}$ as $A^{\text{can}}_{\pi_{2}}+\omega$, where:
\begin{equation}\nonumber
    \omega=a(r(t))\left(\theta^{2}\otimes e_{3}-\theta^{3}\otimes  e_{2}+\theta^{4}\otimes e_{1}\right)+\left(b(r(t))+1\right)\left(\theta^{5}\otimes e_{3}-\theta^{6}\otimes e_{2}+\theta^{7}\otimes  e_{1}\right).
\end{equation}
Any equivariant polynomial $u$ satisfying:
\begin{equation}\label{Pi2EquivPolyAt1p}
u\left(1\right)\left(p,q\right)=\frac{\left<p,1\right>-p}{2}
\end{equation}
has the following restriction on $S^{3}\subset\mathbb{H}$:
\begin{equation}
u\left(x\right)\left(p\right)=\frac{\left<x,p\right>-\overline{x}p}{2}.
\end{equation}
Similarly, any equivariant polynomial $v$ satisfying:
\begin{equation}\label{Pi2EquivPolyAt1q}
v\left(1\right)\left(p,q\right)=\frac{\left<q,1\right>-q}{2}
\end{equation}
has the following restriction on $S^{3}\subset\mathbb{H}$:
\begin{equation}
v\left(x\right)\left(q\right)=\frac{\left<x,q\right>-\overline{x}q}{2}.
\end{equation}
We thus have precisely one homogeneous equivariant polynomial $u_{d}$ satisfying (\ref{Pi2EquivPolyAt1p}) and precisely one homogeneous equivariant polynomial $v_{d}$ satisfying (\ref{Pi2EquivPolyAt1q}) in each degree $d=1+2k$.

$\newline$
We rewrite the form $\omega$ as:
\begin{equation}
\omega= a(r(t))\;u_{d}\left(1\right)+\frac{b(r(t))+1}{t}\;v_{d}\left(1\right).
\end{equation}
Applying the criterion of Eschenburg and Wang we obtain:
\begin{proposition}\label{PPi2ExtendCondition}
Let $A\in\mathcal{A}_{\text{inv}}\left(P_{\text{Id}}\right)$ be an invariant connection defined over $X^{8}-S^{4}$. Let
$$\omega=a(r(t))\left(\theta^{2}\otimes e_{3}-\theta^{3}\otimes  e_{2}+\theta^{4}\otimes e_{1}\right)+\left(b(r(t))+1\right)\left(\theta^{5}\otimes e_{3}-\theta^{6}\otimes e_{2}+\theta^{7}\otimes  e_{1}\right)$$
be the tensorial form expressing $A$ with respect to the canonical invariant connection of $P_{\pi_{2}}$. Then $A$ extends over the singular orbit on $P_{\pi_{2}}$ if and only if the following hold:
\begin{itemize}
    \item The function $a(r(t))$ is smooth from the right at $t=0$, odd and $O\left(t\right)$.
    \item The function $b(r(t))+1$ is smooth from the right at $t=0$, even and $O\left(t^{2}\right)$.
\end{itemize}
\end{proposition}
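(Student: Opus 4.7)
The plan is to mirror the proof of Proposition \ref{PPi1ExtendCondition}, with the appropriate modifications for $\mu = \pi_{2}$. All the supporting machinery — the reduction to $W_{0}$, the quaternionic model of the $\text{SO}(4)$-representations, and the Eschenburg-Wang criterion — has already been developed and is $\mu$-independent. Only the pieces depending on the specific identification of which $\text{Sp}(1)$-factor of the double cover conjugates on $\mathfrak{Im}\left(\mathbb{H}\right)$ must be redone, and these differ from the $\pi_{1}$ case by the obvious swap.

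The first step is to verify that the candidate polynomials $u(x)(p) = \frac{\langle x,p\rangle - \bar{x}p}{2}$ and $v(x)(q) = \frac{\langle x,q\rangle - \bar{x}q}{2}$ on $S^{3}\subset\mathbb{H}$ satisfy the $\mu = \pi_{2}$ equivariance $u(ax\bar{b})(p) = b\,u(x)(\bar{a}pb)\,\bar{b}$ and analogously for $v$. After expanding both sides and using $b\bar{b} = 1$, this reduces to the real identity $\langle ax\bar{b}, p\rangle = \langle x, \bar{a}pb\rangle$, which follows immediately from the definition $\langle \cdot, \cdot\rangle = \mathfrak{Re}(\cdot\,\overline{\cdot})$ together with the invariance of $\mathfrak{Re}$ under cyclic permutation of quaternionic products.

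The second step is the classification of homogeneous equivariant extensions. The space of such polynomials of each fixed degree is at most one-dimensional, since $u_{d}$ (resp.\ $v_{d}$) can only be obtained by multiplying the basic $u$ (resp.\ $v$) by powers of the $\text{Sp}(1)^{2}$-invariant quadratic $|p|^{2}$ (resp.\ $|q|^{2}$); this forces the admissible degrees to be the odd integers $d = 1 + 2k$. I expect the representation-theoretic bookkeeping here — rather than any genuine analytical difficulty — to be the main (minor) obstacle, because one must rule out unexpected equivariant polynomials arising from the change of $\mu$. This is ultimately immediate, because the $\pi_{1}$- and $\pi_{2}$-spinor actions of $\text{Sp}(1)^{2}$ on $\mathbb{H}$ are linearly isomorphic, so the count of equivariant maps is unchanged.

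The final step is to pull $\omega$ back to $W_{0}$ along the ray $t\mapsto x_{r(t)}$, using the $1/t$-scaling of $\theta^{5},\theta^{6},\theta^{7}$ established earlier, to obtain
$$\gamma(t) = a(r(t))\,u(1) + \frac{b(r(t))+1}{t}\,v(1).$$
Since $u_{d}(1) = u(1)$ and $v_{d}(1) = v(1)$ for every admissible $d$ (by the unit normalisation at $x = 1$), the Eschenburg-Wang criterion requires that both $a(r(t))$ and $(b(r(t))+1)/t$ be smooth odd power series in $t$ at $t = 0$, each vanishing to first order. Rewriting the condition on $(b(r(t))+1)/t$ as a condition on $b(r(t))+1$ yields the stated smoothness, evenness and $O(t^{2})$ behaviour, thereby producing the claimed characterisation and completing the argument.
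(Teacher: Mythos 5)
Your proposal is correct and follows essentially the same route as the paper: it adapts the $P_{\pi_{1}}$ argument by swapping the roles of the two $\mathrm{Sp}(1)$ factors, verifies the $\pi_{2}$-equivariant restriction $u(x)(p)=\tfrac{\langle x,p\rangle-\overline{x}p}{2}$ on $S^{3}$, classifies the homogeneous extensions as existing only in odd degrees $d=1+2k$, and then reads off the parity and vanishing conditions on $a$ and $b+1$ from the Eschenburg--Wang criterion. The only addition is your explicit justification of equivariance via cyclic invariance of $\mathfrak{Re}$, which the paper leaves implicit.
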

\section{SO(5)-Invariant Instantons with Structure Group SO(3): Analysis on \texorpdfstring{$P_{\text{Id}}$}{text} }\label{FinalSection}
In the remainder, we will study the invariant Spin(7) instanton and HYM equations on the bundle $P_{\text{Id}}$. The relevant analysis constitutes the heart of the present article.
\subsection{The SO(5)-Invariant HYM ODEs on \texorpdfstring{$P_{\text{Id}}$}{text}}
We begin with the HYM equations for which the situation is significantly simpler. 
\subsubsection{Derivation}
A general invariant connection $A\in\mathcal{A}_{\text{inv}}\left(P_{\text{Id}}\right)$ takes the form (\ref{InvConSO(3)Id}). The associated curvature tensor $F_{A}=F_{A}^{j}\otimes e_{j}$ is given in (\ref{InvCurvSO(3)Id}). The Hodge dual of the K\"{a}hler form has been computed in (\ref{CubeInvKahlerForm}). We observe that:
\begin{equation}
F_{A}\wedge\star_{g}\omega=0.
\end{equation}
Consequently, the HYM equations reduce to (\ref{HYMEquation2}). We write:
\begin{equation}\nonumber
  F_{A}\wedge\Omega=\left[F_{A}^{j}\wedge\mathfrak{Re}\left(\Omega\right)+iF_{A}^{j}\wedge\mathfrak{Im}\left(\Omega\right)\right]\otimes e_{j}.
\end{equation}
Using (\ref{InvRealOmega}) and (\ref{InvImaginaryOmega}) we obtain the results:
\begin{align}\label{SO(3)HYMRe1}
    F^{1}_{A}\wedge\mathfrak{Re}\left(\Omega\right)&=\left(R_{+}^{3}(1-b^{2})-R_{+}R_{-}^{2}(1-a^{2})\right)\theta^{123456}-2R_{+}R_{-}^{2}ab\;\theta^{123567}\\
    &+\frac{r}{2}\left(R_{+}(1-b^{2})-\frac{R_{-}^{2}}{R_{+}}(1-a^{2})\right)dr\wedge\theta^{23567}+rR_{+}ab\;dr\wedge\theta^{23456}\nonumber\\
    &+\left(\frac{rR_{+}}{2}b+R_{+}^{3}\frac{db}{dr}\right)dr\wedge\theta^{12347}-\left(\frac{rR_{-}^{2}}{2R_{+}}b+R_{+}R_{-}^{2}\frac{db}{dr}\right)dr\wedge\theta^{14567}\nonumber\\
    &-\left(\frac{rR_{+}}{2}a+R_{+}R_{-}^{2}\frac{da}{dr}\right)dr\wedge\theta^{12467}+\left(\frac{rR_{+}}{2}a+R_{+}R_{-}^{2}\frac{da}{dr}\right)dr\wedge\theta^{13457},\nonumber
\end{align}

\begin{align}\label{SO(3)HYMRe2}
    F^{2}_{A}\wedge\mathfrak{Re}\left(\Omega\right)&=\left(R_{+}^{3}(1-b^{2})-R_{+}R_{-}^{2}(1-a^{2})\right)\theta^{123457}-2R_{+}R_{-}^{2}ab\;\theta^{124567}\\
    &+\frac{r}{2}\left(R_{+}(1-b^{2})-\frac{R_{-}^{2}}{R_{+}}(1-a^{2})\right)dr\wedge\theta^{24567}+rR_{+}ab\;dr\wedge\theta^{23457}\nonumber\\
    &-\left(\frac{rR_{+}}{2}b+R_{+}^{3}\frac{db}{dr}\right)dr\wedge\theta^{12346}+\left(\frac{rR_{-}^{2}}{2R_{+}}b+R_{+}R_{-}^{2}\frac{db}{dr}\right)dr\wedge\theta^{13567}\nonumber\\
    &+\left(\frac{rR_{+}}{2}a+R_{+}R_{-}^{2}\frac{da}{dr}\right)dr\wedge\theta^{12367}-\left(\frac{rR_{+}}{2}a+R_{+}R_{-}^{2}\frac{da}{dr}\right)dr\wedge\theta^{13456},\nonumber
\end{align}
\begin{align}\label{SO(3)HYMRe3}
    F^{3}_{A}\wedge\mathfrak{Re}\left(\Omega\right)&=\left(R_{+}^{3}(1-b^{2})-R_{+}R_{-}^{2}(1-a^{2})\right)\theta^{123467}-2R_{+}R_{-}^{2}ab\;\theta^{134567}\\
    &+\frac{r}{2}\left(R_{+}(1-b^{2})-\frac{R_{-}^{2}}{R_{+}}(1-a^{2})\right)dr\wedge\theta^{34567}+rR_{+}ab\;dr\wedge\theta^{23467}\nonumber\\
    &+\left(\frac{rR_{+}}{2}b+R_{+}^{3}\frac{db}{dr}\right)dr\wedge\theta^{12345}-\left(\frac{rR_{-}^{2}}{2R_{+}}b+R_{+}R_{-}^{2}\frac{db}{dr}\right)dr\wedge\theta^{12567}\nonumber\\
    &-\left(\frac{rR_{+}}{2}a+R_{+}R_{-}^{2}\frac{da}{dr}\right)dr\wedge\theta^{12357}+\left(\frac{rR_{+}}{2}a+R_{+}R_{-}^{2}\frac{da}{dr}\right)dr\wedge\theta^{12456},\nonumber
\end{align}

\begin{align}\label{SO(3)HYMIm1}
    F^{1}_{A}\wedge\mathfrak{Im}\left(\Omega\right)&=\left(R_{+}^{2}R_{-}(1-b^{2})-R_{-}^{3}(1-a^{2})\right)\theta^{123567}+2R^{2}_{+}R_{-}ab\;\theta^{123456}\\
    &-\frac{r}{2}\left(\frac{R^{2}_{+}}{R_{-}}(1-b^{2})-R_{-}(1-a^{2})\right)dr\wedge\theta^{23456}+rR_{-}ab\;dr\wedge\theta^{23567}\nonumber\\
    &-\left(\frac{rR_{-}}{2}b+R_{+}^{2}R_{-}\frac{db}{dr}\right)dr\wedge\theta^{12467}+\left(\frac{rR_{-}}{2}b+R_{+}^{2}R_{-}\frac{db}{dr}\right)dr\wedge\theta^{13457}\nonumber\\
    &+\left(\frac{rR_{-}}{2}a+R_{-}^{3}\frac{da}{dr}\right)dr\wedge\theta^{14567}-\left(\frac{rR_{+}^{2}}{2R_{-}}a+R_{+}^{2}R_{-}\frac{da}{dr}\right)dr\wedge\theta^{12347},\nonumber
\end{align}
\begin{align}\label{SO(3)HYMIm2}
    F^{2}_{A}\wedge\mathfrak{Im}\left(\Omega\right)&=\left(R_{+}^{2}R_{-}(1-b^{2})-R_{-}^{3}(1-a^{2})\right)\theta^{124567}+2R^{2}_{+}R_{-}ab\;\theta^{123457}\\
    &-\frac{r}{2}\left(\frac{R^{2}_{+}}{R_{-}}(1-b^{2})-R_{-}(1-a^{2})\right)dr\wedge\theta^{23457}+rR_{-}ab\;dr\wedge\theta^{24567}\nonumber\\
    &+\left(\frac{rR_{-}}{2}b+R_{+}^{2}R_{-}\frac{db}{dr}\right)dr\wedge\theta^{12367}-\left(\frac{rR_{-}}{2}b+R_{+}^{2}R_{-}\frac{db}{dr}\right)dr\wedge\theta^{13456}\nonumber\\
    &-\left(\frac{rR_{-}}{2}a+R_{-}^{3}\frac{da}{dr}\right)dr\wedge\theta^{13567}+\left(\frac{rR_{+}^{2}}{2R_{-}}a+R_{+}^{2}R_{-}\frac{da}{dr}\right)dr\wedge\theta^{13457},\nonumber
\end{align}
\begin{align}\label{SO(3)HYMIm3}
    F^{3}_{A}\wedge\mathfrak{Im}\left(\Omega\right)&=\left(R_{+}^{2}R_{-}(1-b^{2})-R_{-}^{3}(1-a^{2})\right)\theta^{134567}+2R^{2}_{+}R_{-}ab\;\theta^{123467}\\
    &-\frac{r}{2}\left(\frac{R^{2}_{+}}{R_{-}}(1-b^{2})-R_{-}(1-a^{2})\right)dr\wedge\theta^{23467}+rR_{-}ab\;dr\wedge\theta^{34567}\nonumber\\
    &-\left(\frac{rR_{-}}{2}b+R_{+}^{2}R_{-}\frac{db}{dr}\right)dr\wedge\theta^{12357}+\left(\frac{rR_{-}}{2}b+R_{+}^{2}R_{-}\frac{db}{dr}\right)dr\wedge\theta^{12456}\nonumber\\
    &+\left(\frac{rR_{-}}{2}a+R_{-}^{3}\frac{da}{dr}\right)dr\wedge\theta^{12567}-\left(\frac{rR_{+}^{2}}{2R_{-}}a+R_{+}^{2}R_{-}\frac{da}{dr}\right)dr\wedge\theta^{12345}.\nonumber
\end{align}
Observe that there are similarities among the various components. In particular the vanishing of any one of them is equivalent to the full HYM system. Setting any one of the components to be zero gives the invariant equations. They are as follows:
\begin{equation}\label{SO(3)HYMinv1}
\frac{da}{dr}=-\frac{r}{2R_{-}^{2}},
\end{equation}
\begin{equation}\label{SO(3)HYMinv2}
\frac{db}{dr}=-\frac{r}{2R_{+}^{2}},
\end{equation}
\begin{equation}\label{SO(3)HYMinv3}
R_{+}^{2}(1-b^{2})=R_{-}(1-a^{2}),
\end{equation}
\begin{equation}\label{SO(3)HYMinv4}
ab=0.
\end{equation}
Consequently, the invariant HYM connections over $P_{\text{Id}}$ obey the differential equations (\ref{SO(3)HYMinv1}), (\ref{SO(3)HYMinv2}) and satisfy the constraints (\ref{SO(3)HYMinv3}), (\ref{SO(3)HYMinv4}). Observe that (\ref{HYMEquation2}) involves only the holomorphic volume form of $X^{8}$. As a result, the coefficients of the Stenzel metric do not appear in (\ref{SO(3)HYMinv1})-(\ref{SO(3)HYMinv4}).

\subsubsection{Explicit Solution, Extension to \texorpdfstring{$S^{4}$}{text} and Decay of the Curvature Norm}\label{BundNotTriv}
The equations (\ref{SO(3)HYMinv1})-(\ref{SO(3)HYMinv4}) can be solved explicitly. We obtain precisely two solutions:
\begin{equation}
    A_{\text{HYM}_{\pi_{1}}}\defeq \frac{1}{R_{+}}\left(\theta^{5}e_{3}-\theta^{6}e_{2}+\theta^{7} e_{1}\right),
\end{equation}
\begin{equation}
    A_{\text{HYM}_{\pi_{2}}}\defeq -\frac{1}{R_{+}}\left(\theta^{5}e_{3}-\theta^{6}e_{2}+\theta^{7} e_{1}\right).
\end{equation}
The notation has been chosen in hindsight to reflect the bundle on which these connections extend. In particular, using propositions \ref{PPi1ExtendCondition} and \ref{PPi2ExtendCondition}, we find that $A_{\text{HYM}_{\pi_{1}}}$ extends to $P_{\pi_{1}}$, while $A_{\text{HYM}_{\pi_{2}}}$ extends to $P_{\pi_{2}}$. We treat  $A_{\text{HYM}_{\pi_{1}}}$ in detail. In this case:
\begin{equation}\nonumber
    a(r(t))=0,\;\;\;\;b(r(t))-1=\frac{1-\sqrt{t^{2}+1}}{\sqrt{t^{2}+1}}.
\end{equation}
The conditions on $a(r(t))$ are trivially satisfied. The function $b(r(t))-1$ is obviously smooth from the right at $t=0$. It is even, since $t$ only appears in power $2$. One easily computes that both $b(r(t))-1$ and its first derivative vanish at $t=0$. Consequently, $b(r(t))-1=O(t^{2})$.

$\newline$
Using (\ref{InvCurvSO(3)Id}) we compute the associated curvature tensors:
\begin{align}\label{InvCurvSO(3)IdPi1}
    F_{A_{\text{HYM}_{\pi_{1}}}}&=\left(\theta^{23}+\frac{R_{-}^{2}}{R_{+}^{2}}\;\theta^{56}+\frac{1}{R_{+}}\;\theta^{14}-\frac{r}{2R_{+}^{3}}dr\wedge\theta^{7}\right)\otimes e_{1}\\
     &+\left(\theta^{24}+\frac{R_{-}^{2}}{R_{+}^{2}}\;\theta^{57}-\frac{1}{R_{+}}\;\theta^{13}+\frac{r}{2R_{+}^{3}}dr\wedge\theta^{6}\right)\otimes e_{2}\nonumber\\
      &+\left(\theta^{34}+\frac{R_{-}^{2}}{R_{+}^{2}}\;\theta^{67}+\frac{1}{R_{+}}\;\theta^{12}-\frac{r}{2R_{+}^{3}}dr\wedge\theta^{5}\right)\otimes e_{3},\nonumber
\end{align}

\begin{align}\label{InvCurvSO(3)IdPi2}
    F_{A_{\text{HYM}_{\pi_{2}}}}&=\left(\theta^{23}+\frac{R_{-}^{2}}{R_{+}^{2}}\;\theta^{56}-\frac{1}{R_{+}}\;\theta^{14}+\frac{r}{2R_{+}^{3}}dr\wedge\theta^{7}\right)\otimes e_{1}\\
     &+\left(\theta^{24}+\frac{R_{-}^{2}}{R_{+}^{2}}\;\theta^{57}+\frac{1}{R_{+}}\;\theta^{13}-\frac{r}{2R_{+}^{3}}dr\wedge\theta^{6}\right)\otimes e_{2}\nonumber\\
      &+\left(\theta^{34}+\frac{R_{-}^{2}}{R_{+}^{2}}\;\theta^{67}-\frac{1}{R_{+}}\;\theta^{12}+\frac{r}{2R_{+}^{3}}dr\wedge\theta^{5}\right)\otimes e_{3}.\nonumber
\end{align}
Since both solutions smoothly extend to the singular orbit, we can study their pullbacks. The connection $A_{\text{HYM}_{\pi_{1}}}$ pulls back to the canonical invariant connection of $P_{\pi_{1}}$ over $S^{4}$:
\begin{align}
 A_{\text{SD}}&\defeq A_{\text{HYM}_{\pi_{1_{|_{S^{4}}}}}}\nonumber\\
 &=\left(\theta^{8}+\theta^{7}\right)\otimes e_{1}+\left(\theta^{9}-\theta^{6}\right)\otimes e_{2}+\left(\theta^{10}+\theta^{5}\right)\otimes e_{3}.
\end{align}
Its curvature is given by:
\begin{equation}
    F_{A_{\text{SD}}}=\left(\theta^{23}+\theta^{14}\right)\otimes e_{1}+\left(\theta^{24}-\theta^{13}\right)\otimes e_{2}+\left(\theta^{34}+\theta^{12}\right)\otimes e_{3}.
\end{equation}
An explicit calculation yields that $A_{\text{SD}}$ is a self dual instanton on $S^{4}$. This justifies our choice of notation.

$\newline$
The connection $A_{\text{HYM}_{\pi_{2}}}$ pulls back to the canonical invariant connection of $P_{\pi_{2}}$:
\begin{align}
 A_{\text{ASD}}&\defeq A_{\text{HYM}_{\pi_{2_{|_{S^{4}}}}}}\nonumber\\
 &=\left(\theta^{8}-\theta^{7}\right)\otimes e_{1}+\left(\theta^{9}+\theta^{6}\right)\otimes e_{2}+\left(\theta^{10}-\theta^{5}\right)\otimes e_{3}.
\end{align}
Its curvature is given by:
\begin{equation}\label{CurvASD}
    F_{A_{\text{ASD}}}=\left(\theta^{23}-\theta^{14}\right)\otimes e_{1}+\left(\theta^{24}+\theta^{13}\right)\otimes e_{2}+\left(\theta^{34}-\theta^{12}\right)\otimes e_{3}.
\end{equation}
An explicit calculation yields that $A_{\text{ASD}}$ is an anti-self-dual instanton on $S^{4}$. This justifies our choice of notation.

We wish to endow $\text{ad}\left(P_{\pi_{1}}\right)$ with a fiber metric. To this end, it suffices to choose an Ad-invariant inner product on $\mathfrak{so}(3)$. In general, the choice of such an inner product is free. However, we shall choose $\langle\cdot,\cdot\rangle$ so that:
\begin{equation}\label{InnProdso(3)}
    e_{i}\perp e_{j}\text{ if }i\neq j,\;\;\;\; \norm{e_{i}}^{2}=\frac{1}{2}.
\end{equation}
This is the unique inner product on $\mathfrak{so}(3)$ satisfying:
\begin{equation}\label{CrucialConditionInnProdso(3)}
\norm{\xi}^{2}=-\text{Tr}\left(\xi^{2}\right).
\end{equation}
This identity is required to relate the Yang-Mills energy of instantons to characteristic classes of the underlying bundle. Using the Stenzel metric and the fiber metric (\ref{InnProdso(3)}) we see that the curvature norms of the two connections have the same constant value on all points of $S^{4}$:
\begin{equation}\nonumber
\norm{F_{A_{\text{SD}}}}^{2}=\norm{F_{A_{\text{ASD}}}}^{2}=3.
\end{equation}
Since the restriction of the Stenzel metric on the singular orbit is round of unit radius, we have:
\begin{align}\label{ASDSDenergy}
    \mathcal{YM}\left(F_{A_{\text{SD}}}\right)&=\mathcal{YM}\left(F_{A_{\text{ASD}}}\right)\nonumber\\
    &=\int_{S^{4}}\norm{F_{A_{\text{SD}}}}^{2}dV_{g}\nonumber\\
    &=3\text{Vol}\left(S^{4}\right)\\
    &=8\pi^{2}.\nonumber
\end{align}
Owing to (\ref{CrucialConditionInnProdso(3)}), any SO(3)-connection satisfies:
\begin{equation}
\text{Tr}\left(F_{A}^{2}\right)=\left(\norm{F_{A}^{-}}^{2}-\norm{F_{A}^{+}}^{2}\right)dV_{g}.
\end{equation}
Using this result, we see that the self dual SO(3) instantons on $S^{4}$ have Yang-Mills energy equal to $-8\pi^{2}$ times the integral of the first Pontryagin class of the bundle. Similarly, ASD instantons have Yang-Mills energy equal to $8\pi^{2}$ times the integral of the first Pontryagin class. Using (\ref{ASDSDenergy}) we obtain:
\begin{align}
p_{1}\left(P_{\pi_{1}}\right)&=-1,\\
p_{1}\left(P_{\pi_{2}}\right)&=1.
\end{align}
We already knew that $P_{\pi_{1}}$ and $P_{\pi_{2}}$ are not equivariantly trivial nor equivariantly isomorphic to each other. The above calculation shows that they are genuinely nonrivial and non-isomorphic (even if we drop the requirement that the identification be equivariant).

$\newline$
Finally, we determine the radial decay rate of the curvature norm of $A_{\text{HYM}_{\pi_{1}}}$ and $A_{\text{HYM}_{\pi_{2}}}$. Since (\ref{InvCurvSO(3)IdPi1}) and (\ref{InvCurvSO(3)IdPi2}) only differ by certain signs, it suffices to treat $A_{\text{HYM}_{\pi_{1}}}$. Using (\ref{InnProdso(3)}), (\ref{X1lengthCY}), (\ref{RadiallengthCY}), (\ref{X234lengthCY}), (\ref{X567lengthCY}) and (\ref{InvCurvSO(3)IdPi1}), we find that:
\begin{align}
&\norm{F_{A_{\text{HYM}_{\pi_{1}}}}}^{2}=2\sqrt{3}\frac{3r^{4}+10r^{2}+11}{\left(r^{2}+1\right)^{3}\left(r^{2}+2\right)^{\frac{1}{2}}}.\nonumber
\end{align}
In particular, we see that as $r\to\infty$:
\begin{equation}
\norm{F_{A_{\text{HYM}_{\pi_{1}}}}}^{2}=O(r^{-3}).
\end{equation}
This decay rate is not sufficient for the Yang-Mills energy to be finite:
\begin{align}
\mathcal{YM}\left(A_{\text{HYM}_{\pi_{1}}}\right)&=\int_{X^{8}}\norm{F_{A_{\text{HYM}_{\pi_{1}}}}}^{2}dV_{g}\nonumber\\
&=\int_{\frac{\text{SO}(5)}{\text{SO}(3)}}\theta^{1234567}\int_{0}^{\infty}\norm{F_{A_{\text{HYM}_{\pi_{1}}}}}^{2}\frac{r}{2}R_{+}^{2}R_{-}^{2}dr\nonumber\\
&=\frac{\sqrt{3}}{4}\text{Vol}\left(\frac{\text{SO}(5)}{\text{SO}(3)}\right)\int_{0}^{\infty}\frac{r(r-1)(r+1)}{\left(r^{2}+1\right)^{2}\left(r^{2}+2\right)^{\frac{1}{2}}}\left(3r^{4}+10r^{2}+11\right)dr\nonumber\\
&=K\int_{0}^{\infty}O\left(r^{2}\right)dr=+\infty.\nonumber
\end{align}

\newpage
\subsection{The SO(5)-Invariant Spin(7) Instanton ODEs on \texorpdfstring{$P_{\text{Id}}$}{text}}
\subsubsection{Derivation}
We consider a general invariant connection $A\in\mathcal{A}_{\text{inv}}\left(P_{\text{Id}}\right)$ with associated curvature field:
$$F_{A}=F_{A}^{j}\otimes e_{j}.$$
We remind the reader that these take the form (\ref{InvConSO(3)Id}), (\ref{InvCurvSO(3)Id}). Using (\ref{CayleyCalib}) we compute:
\begin{align}
\Phi\wedge F_{A}^{1}&=\left(PQ(1-a^{2})+\frac{rR_{+}}{2}b+R_{+}^{3}\frac{db}{dr}\right)dr\wedge\theta^{12347}\\
&+\left(PQ(1-b^{2})-\frac{rR_{-}^{2}}{2R_{+}}b-R_{+}R_{-}^{2}\frac{db}{dr}\right)dr\wedge\theta^{14567}\nonumber\\
&+\left(PQ\;ab-\frac{rR_{+}}{2}a-R_{+}R_{-}^{2}\frac{da}{dr}\right)dr\wedge\theta^{12467}+\left(-PQ\;ab+\frac{rR_{+}}{2}a+R_{+}R_{-}^{2}\frac{da}{dr}\right)dr\wedge\theta^{13457}\nonumber\\
&+\left(\frac{rR_{+}}{2}(1-a^{2})-\frac{rR_{-}^{2}}{2R_{+}}(1-a^{2})-Q^{2}\frac{db}{dr}\right)dr\wedge\theta^{23567}+\left(rR_{+}ab-Q^{2}\frac{da}{dr}\right)dr\wedge\theta^{23456}\nonumber\\
&+\left(R_{+}^{3}(1-b^{2})-R_{+}R_{-}^{2}(1-a^{2})-Q^{2}b\right)\theta^{123456}+\left(-2R_{+}R_{-}^{2}ab+Q^{2}a\right)\theta^{123567},\nonumber
\end{align}
\begin{align}
\Phi\wedge F_{A}^{2}=&-\left(PQ(1-a^{2})+\frac{rR_{+}}{2}b+R_{+}^{3}\frac{db}{dr}\right)dr\wedge\theta^{12346}\\
&+\left(-PQ(1-b^{2})+\frac{rR_{-}^{2}}{2R_{+}}b+R_{+}R_{-}^{2}\frac{db}{dr}\right)dr\wedge\theta^{13567}\nonumber\\
&+\left(-PQ\;ab+\frac{rR_{+}}{2}a+R_{+}R_{-}^{2}\frac{da}{dr}\right)dr\wedge\theta^{12367}+\left(PQ\;ab-\frac{rR_{+}}{2}a-R_{+}R_{-}^{2}\frac{da}{dr}\right)dr\wedge\theta^{13456}\nonumber\\
&+\left(\frac{rR_{+}}{2}(1-a^{2})-\frac{rR_{-}^{2}}{2R_{+}}(1-a^{2})-Q^{2}\frac{db}{dr}\right)dr\wedge\theta^{24567}+\left(rR_{+}ab-Q^{2}\frac{da}{dr}\right)dr\wedge\theta^{23457}\nonumber\\
&+\left(R_{+}^{3}(1-b^{2})-R_{+}R_{-}^{2}(1-a^{2})-Q^{2}b\right)\theta^{123457}+\left(-2R_{+}R_{-}^{2}ab+Q^{2}a\right)\theta^{124567},\nonumber
\end{align}

\begin{align}
\Phi\wedge F_{A}^{3}&=\left(PQ(1-a^{2})+\frac{rR_{+}}{2}b+R_{+}^{3}\frac{db}{dr}\right)dr\wedge\theta^{12345}\\
&+\left(PQ(1-b^{2})-\frac{rR_{-}^{2}}{2R_{+}}b-R_{+}R_{-}^{2}\frac{db}{dr}\right)dr\wedge\theta^{12567}\nonumber\\
&+\left(PQ\;ab-\frac{rR_{+}}{2}a-R_{+}R_{-}^{2}\frac{da}{dr}\right)dr\wedge\theta^{12357}+\left(-PQ\;ab+\frac{rR_{+}}{2}a+R_{+}R_{-}^{2}\frac{da}{dr}\right)dr\wedge\theta^{12456}\nonumber\\
&+\left(\frac{rR_{+}}{2}(1-a^{2})-\frac{rR_{-}^{2}}{2R_{+}}(1-a^{2})-Q^{2}\frac{db}{dr}\right)dr\wedge\theta^{34567}+\left(rR_{+}ab-Q^{2}\frac{da}{dr}\right)dr\wedge\theta^{23467}\nonumber\\
&+\left(R_{+}^{3}(1-b^{2})-R_{+}R_{-}^{2}(1-a^{2})-Q^{2}b\right)\theta^{123467}+\left(-2R_{+}R_{-}^{2}ab+Q^{2}a\right)\theta^{134567}.\nonumber
\end{align}
We now wish to calculate the Hodge dual of the curvature. We will require the Hodge duals of all $2$-forms $\theta^{ij}$. These can be computed using (\ref{InvMetric}) and (\ref{HodgeStar}). Carrying out this calculation yields:
\begin{align}
    \star_{g}F_{A}^{1}=&-(1-a^{2})\frac{PQR_{-}^{2}}{R_{+}^{2}}dr\wedge\theta^{14567}-(1-b^{2})\frac{PQR_{+}^{2}}{R_{-}^{2}}dr\wedge\theta^{12347}\\
    &+abPQdr\wedge\theta^{12467}-abPQdr\wedge\theta^{13457}\newline\nonumber\\
    &-\frac{rQ^{2}}{2R_{+}^{2}}b\;dr\wedge\theta^{23567}-a\frac{rQ^{2}}{2R_{-}^{2}}dr\wedge\theta^{23456}\nonumber\\
    &+\frac{da}{dr}\frac{2R_{-}^{2}Q^{2}}{r}\theta^{123567}-\frac{db}{dr}\frac{2R_{+}^{2}Q^{2}}{r}\theta^{123456},\nonumber
\end{align}

\begin{align}
    \star_{g}F_{A}^{2}=&(1-a^{2})\frac{PQR_{-}^{2}}{R_{+}^{2}}dr\wedge\theta^{13567}+(1-b^{2})\frac{PQR_{+}^{2}}{R_{-}^{2}}dr\wedge\theta^{12346}\\
    &-abPQdr\wedge\theta^{12367}+abPQdr\wedge\theta^{13456}\newline\nonumber\\
    &-\frac{rQ^{2}}{2R_{+}^{2}}b\;dr\wedge\theta^{24567}-a\frac{rQ^{2}}{2R_{-}^{2}}dr\wedge\theta^{23457}\nonumber\\
    &+\frac{da}{dr}\frac{2R_{-}^{2}Q^{2}}{r}\theta^{124567}-\frac{db}{dr}\frac{2R_{+}^{2}Q^{2}}{r}\theta^{123457},\nonumber
\end{align}

\begin{align}
    \star_{g}F_{A}^{1}=&-(1-a^{2})\frac{PQR_{-}^{2}}{R_{+}^{2}}dr\wedge\theta^{12567}-(1-b^{2})\frac{PQR_{+}^{2}}{R_{-}^{2}}dr\wedge\theta^{12345}\\
    &+abPQdr\wedge\theta^{12357}-abPQdr\wedge\theta^{12456}\newline\nonumber\\
    &-\frac{rQ^{2}}{2R_{+}^{2}}b\;dr\wedge\theta^{34567}-a\frac{rQ^{2}}{2R_{-}^{2}}dr\wedge\theta^{23467}\nonumber\\
    &+\frac{da}{dr}\frac{2R_{-}^{2}Q^{2}}{r}\theta^{134567}-\frac{db}{dr}\frac{2R_{+}^{2}Q^{2}}{r}\theta^{123467}.\nonumber
\end{align}
The Spin(7) instanton equations are given by:
\begin{equation}\label{Spin(7)eqSO(3)Prelim}
    \star_{g}F^{i}_{A}=-\Phi\wedge F^{i}_{A}.
\end{equation}
The set of equations obtained by imposing (\ref{Spin(7)eqSO(3)Prelim}) is the same for each $i=1,2,3$. It is as follows:
\begin{align}
&\frac{da}{dr}=\frac{2PQ}{R_{+}R_{-}^{2}}\;ab-\frac{r}{2R_{-}^{2}}a,\\
&\frac{da}{dr}=\frac{rR_{+}}{Q^{2}}ab-\frac{r}{2R_{-}^{2}}a,\\
&\frac{db}{dr}=\frac{PQ}{R_{-}^{2}R_{+}}(1-b^{2})-\frac{PQ}{R_{+}^{3}}(1-a^{2})-\frac{r}{2R_{+}^{2}}b,\\
&\frac{db}{dr}=\frac{rR_{+}}{2Q^{2}}(1-b^{2})-\frac{R_{-}^{2}r}{2R_{+}Q^{2}}(1-a^{2})-\frac{r}{2R_{+}^{2}}b.
\end{align}
This system is overdetermined unless the metric coefficients satisfy the condition:
\begin{equation}
PQ^{3}=\frac{r}{2}R_{+}^{2}R_{-}^{2}.
\end{equation}
We recognize this as the SO(5)-invariant Monge–Ampère equation (\ref{CalabiYauEquation}) distinguishing the Stenzel metric among the K\"{a}hler metrics induced from SO(5)-invariant potentials.

$\newline$
It is useful to work in coordinates compatible with the Eschenburg-Wang analysis. We therefore switch to the variable $t=R_{-}$. An elementary calculation shows that the system takes the form:
\begin{align}\label{InvSpin(7)InstantonEqSO(3)}
    &\frac{da}{dt}=\frac{\mathcal{P}a}{t}\left(b-\frac{1}{\mathcal{P}}\right),\\
    &\frac{db}{dt}=\frac{\mathcal{P}}{2t}\left(1-b^2\right)-\frac{\mathcal{P}\mathcal{Q}}{2}\left(1-a^2\right)-\mathcal{Q}b.\nonumber
\end{align}
where we have introduced the functions $\mathcal{P},\mathcal{Q}\in C^{\infty}\left[0,\infty\right)$ defined by:
\begin{align}
&\mathcal{P}(t)\defeq\frac{\sqrt{6}\sqrt{2t^{2}+2}}{\sqrt{2t^{2}+3}},\\
&\mathcal{Q}(t)\defeq\frac{t}{t^2+1}.
\end{align}

\subsubsection{Elementary Observations}
We begin our analysis of the system (\ref{InvSpin(7)InstantonEqSO(3)}). In this section we make a few elementary observations about the dynamics. First, we have the following:
\begin{proposition}\label{aNotZero}
The dynamics \ref{InvSpin(7)InstantonEqSO(3)} preserve the vanishing of $a$ and correspondingly if $a(t)\neq 0$ for some $t> 0$, then $a(t)\neq 0$ for all $t> 0$.
\end{proposition}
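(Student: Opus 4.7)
The plan is to exploit the fact that the first equation in the system (\ref{InvSpin(7)InstantonEqSO(3)}) is linear and homogeneous in $a$ once $b$ is regarded as a prescribed function of $t$. Given any solution $(a(t), b(t))$ of the system defined on an interval $I \subset (0,\infty)$, the $a$-equation reads
\begin{equation*}
\frac{da}{dt} = f(t)\, a, \qquad f(t) = \frac{\mathcal{P}(t)}{t}\left(b(t) - \frac{1}{\mathcal{P}(t)}\right),
\end{equation*}
which is a scalar linear ODE in $a$ with continuous coefficient $f$ on any compact subinterval $[t_1, t_2] \subset I$. Indeed, $\mathcal{P}$ is smooth and bounded away from zero on $[0,\infty)$ (one checks $\mathcal{P}(0)=2$ and $\mathcal{P}(t)\to\sqrt{6}$ as $t\to\infty$), the function $b$ is smooth on $I$, and $t$ is bounded away from zero on $[t_1, t_2]$.

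The integrating factor formula then gives, for any $t, t_0 \in [t_1, t_2]$,
\begin{equation*}
a(t) = a(t_0)\,\exp\!\left(\int_{t_0}^{t} f(s)\, ds\right).
\end{equation*}
The exponential factor is always nonzero, so $a$ either vanishes identically on $[t_1, t_2]$ or never vanishes there. Equivalently, standard uniqueness for scalar linear ODEs implies that $a \equiv 0$ is the only solution satisfying $a(t_0)=0$ at any $t_0 > 0$.

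Both halves of the proposition follow at once. If $a(t_0)=0$ for some $t_0 > 0$, the dichotomy applied on compact subintervals covering $I$ and containing $t_0$ forces $a \equiv 0$ on $I$, so the vanishing locus is invariant under the flow. The contrapositive, that $a(t_0)\neq 0$ implies $a(t)\neq 0$ for all $t > 0$, is the remaining claim. I do not anticipate any genuine obstacle here: the only subtle point is the singularity of $f$ at $t=0$, which is harmless because the proposition only concerns $t > 0$ and no integration through the origin is required.
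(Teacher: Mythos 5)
Your argument is correct and is essentially the paper's own (one-line) proof spelled out: the paper simply invokes uniqueness in the Picard theorem, which works precisely because the $a$-equation is linear and homogeneous in $a$ once $b$ is treated as given, and your integrating-factor formula makes that dichotomy explicit. No gaps; the observation that the singularity at $t=0$ is irrelevant since the claim only concerns $t>0$ is exactly the right point to flag.
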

\begin{proof}
The first statement is trivial. The second follows by the uniqueness part of the standard Picard theorem.
\end{proof}
Next we observe a symmetry in the solution space:
\begin{proposition}\label{SystemSymmetry}
Suppose that the pair $(a,b)$ solves the system (\ref{InvSpin(7)InstantonEqSO(3)}). Then so does $(-a,b)$.
\end{proposition}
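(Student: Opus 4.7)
The plan is to verify the symmetry by direct substitution into the system (\ref{InvSpin(7)InstantonEqSO(3)}) and check that each of the two ODEs is preserved under $(a,b)\mapsto(-a,b)$.

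For the first equation, I observe that $a$ appears linearly on both sides: once as $\tfrac{da}{dt}$ on the left and once as the factor $a$ on the right (with $b$ and $1/\mathcal{P}$ accompanying it). Substituting $-a$ for $a$ therefore introduces a minus sign on each side, and these cancel; hence the transformed $(-a,b)$ satisfies the same first equation. For the second equation, $a$ enters only through the combination $(1-a^2)$, which is invariant under $a\mapsto -a$, while $b$ is unchanged. So the second equation is also preserved. Applying these two observations proves that $(-a,b)$ solves (\ref{InvSpin(7)InstantonEqSO(3)}) whenever $(a,b)$ does.

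There is no real obstacle here: the statement is an elementary consequence of the fact that $a$ appears only linearly (first equation) or quadratically (second equation) in the system. The only thing worth noting is that the symmetry is not purely formal but reflects a geometric involution on $\mathcal{A}_{\text{inv}}(P_{\text{Id}})$, namely conjugation by a suitable element of $\mathrm{O}(3)\setminus\mathrm{SO}(3)$ (equivalently, a sign flip in one of the irreducible isotypic components of $\mathfrak{m}$ in (\ref{Reductive})), which preserves the Spin(7) instanton condition since the Cayley calibration is invariant under this discrete symmetry; this perspective explains \emph{why} the invariance holds without recomputing.
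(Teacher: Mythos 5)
Your proof is correct and is exactly the ``trivial calculation'' the paper invokes: the first equation is linear in $a$ so the sign flips cancel, and the second depends on $a$ only through $a^{2}$. The added geometric remark about the discrete symmetry is a nice aside but not needed for the argument.
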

\begin{proof}
This follows from a trivial calculation.
\end{proof}
We conclude the following: either $a=0$ for all time, or $a$ has a fixed sign throughout its lifespan. Furthermore it suffices to study the case $a>0$ as -owing to the above observation- all solutions $(a,b)$ with $a<0$ can be obtained by considering a solution where $a>0$ and inverting its sign.

$\newline$
The next proposition establishes that if one solution lies above another at some instant $t^{\star}$, the inequality persists for all time. Here, 'lying above' is interpreted componentwise.
\begin{proposition}\label{NoCross}
Suppose that $(a,b)$, $(\widetilde{a},\widetilde{b})$ are two solutions to the system \ref{InvSpin(7)InstantonEqSO(3)}. Suppose further that for some time $t^{\star}\ge 0$ we have $a(t^{\star})>\widetilde{a}(t^{\star})$ and $b(t^{\star})>\widetilde{b}(t^{\star})$. These inequalities remain true for all $t\ge t^{\star}$ for which both solutions exist.
\end{proposition}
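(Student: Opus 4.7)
The plan is a barrier argument for the difference of the two solutions. Set $u \defeq a - \tilde{a}$ and $v \defeq b - \tilde{b}$; the hypothesis reads $u(t^{\star}), v(t^{\star}) > 0$ and we want both inequalities preserved. Invoking the reduction in the paragraph just before the proposition, I may assume $a, \tilde{a} \geq 0$. Then $a(t^{\star}) > \tilde{a}(t^{\star}) \geq 0$ combined with Proposition \ref{aNotZero} forces $a(t) > 0$ for every $t \geq t^{\star}$ on which the solution exists.

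Next, I linearize the dynamics for $(u, v)$ using the elementary factorizations $ab - \tilde{a}\tilde{b} = a v + \tilde{b} u$, $b^2 - \tilde{b}^2 = (b + \tilde{b})v$, and $a^2 - \tilde{a}^2 = (a + \tilde{a})u$. Substituting these into \eqref{InvSpin(7)InstantonEqSO(3)} yields a linear non-autonomous system of the schematic form $u' = \alpha u + \beta v$, $v' = \gamma u + \delta v$, with off-diagonal couplings
\[
\beta(t) = \frac{\mathcal{P}(t) a(t)}{t}, \qquad \gamma(t) = \frac{\mathcal{P}(t)\mathcal{Q}(t)\bigl(a(t) + \tilde{a}(t)\bigr)}{2}.
\]
The crucial feature is that, under the sign reduction above, both $\beta$ and $\gamma$ are strictly positive on the interval of interest.

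The conclusion is then standard. Assume for contradiction that one of the inequalities fails and set $t^{\star\star} \defeq \inf\{t > t^{\star} : u(t) \leq 0 \text{ or } v(t) \leq 0\}$. By continuity, on $[t^{\star}, t^{\star\star})$ both $u$ and $v$ are positive, while at least one vanishes at $t^{\star\star}$. If both vanish, Picard uniqueness forces $(a,b) \equiv (\tilde{a}, \tilde{b})$, contradicting $u(t^{\star}) > 0$. If only $u(t^{\star\star}) = 0$, the $u$-equation gives $u'(t^{\star\star}) = \beta(t^{\star\star}) v(t^{\star\star}) > 0$, contradicting the fact that $u > 0$ on the left and $u(t^{\star\star}) = 0$ force $u'(t^{\star\star}) \leq 0$. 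The case $v(t^{\star\star}) = 0$ is symmetric, using the $v$-equation and the positivity of $\gamma$. The only substantive step is rearranging the linearization into this cooperative form with nonnegative off-diagonal entries; once this is done, the forward invariance of the positive quadrant is automatic and no further quantitative estimates are required.
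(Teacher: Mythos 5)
Your overall strategy is the same as the paper's: consider the first time one of the inequalities fails and derive a sign contradiction from the derivative of the corresponding difference at that time. Your packaging of this as forward invariance of the positive quadrant for a cooperative linear system in $(u,v)$ is just a tidier version of the paper's three cases, and your computation of the off-diagonal coefficients $\beta=\mathcal{P}a/t$ and $\gamma=\mathcal{P}\mathcal{Q}(a+\widetilde{a})/2$ is correct (indeed more careful than the paper's, which records the derivative of $b-\widetilde{b}$ at a crossing as proportional to $a-\widetilde{a}$ when the computation actually gives $a^{2}-\widetilde{a}^{2}$).

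The gap is your opening reduction. The symmetry of Proposition \ref{SystemSymmetry} acts on one solution at a time, and the remark preceding Proposition \ref{NoCross} only normalizes the sign of a \emph{single} solution. Here two solutions are coupled by the ordering hypothesis $a(t^{\star})>\widetilde{a}(t^{\star})$: flipping the sign of only one of them changes which pair is being compared, and flipping both reverses the ordering of the $a$-components (giving $-a<-\widetilde{a}$), so the flipped pair no longer satisfies the hypothesis. You therefore cannot assume $a,\widetilde{a}\ge 0$ without loss of generality. Without that assumption the cooperative structure fails: for $a>0>\widetilde{a}$ with $a+\widetilde{a}<0$ one has $\gamma<0$, which breaks the argument at a putative zero of $v$, and if both $a$-components are negative then $\beta<0$ and the $u$-step fails. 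To be fair, the paper's own proof carries the same unstated assumption (it asserts $a(t_{f})=\widetilde{a}(t_{f})=s>0$ without justification), and in every application of the proposition one of the two solutions is $a_{\mathrm{HYM}}\equiv 0$ or both arise from initial data $a_{0},\widetilde{a}_{0}>0$, so that $a\ge 0$ and $\widetilde{a}\ge 0$ do hold via Proposition \ref{aNotZero}. If you add that sign hypothesis explicitly, your argument closes; as written, the WLOG step is not justified.
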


\begin{proof}
Suppose not. Let $t_{f}$ be the first time for which the inequality fails. There are three cases:
\begin{enumerate}
    \item $\widetilde{a}(t_{f})=a(t_{f})$ and $\widetilde{b}(t_{f})=b(t_{f})$,
    \item $\widetilde{a}(t_{f})=a(t_{f})$ and $\widetilde{b}(t_{f})<b(t_{f})$,
    \item $\widetilde{a}(t_{f})<a(t_{f})$ and $\widetilde{b}(t_{f})=b(t_{f})$.
\end{enumerate}
Case $1$ contradicts the uniqueness part of the standard Picard theorem.

$\newline$
Suppose case $2$ holds. Consider the evolution of $a-\widetilde{a}$:
\begin{equation}
    \frac{d}{dt}\left(a-\widetilde{a}\right)=\frac{\mathcal{P}ab}{t}-\frac{a}{t}-\left(\frac{\mathcal{P}\widetilde{a}\widetilde{b}}{t}-\frac{\widetilde{a}}{t}\right).\nonumber
\end{equation}
At $t=t_{f}$ we have $\widetilde{a}(t_{f})=a(t_{f})=s>0$ and $\widetilde{b}(t_{f})<b(t_{f})$. Consequently:
\begin{equation}\nonumber
    \frac{d}{dt}\left(a-\widetilde{a}\right)_{|_{t_{f}}}=\frac{\mathcal{P}s}{t}\left(b(t_{f})-\widetilde{b}(t_{f})\right)>0.
\end{equation}
It follows that $\widetilde{a}(t)<a(t)$ for some time $t<t_{f}$ and the intermediate value theorem contradicts the fact that $t_{f}$ is the first time for which the inequalities fail.

$\newline$
Suppose case $3$ holds. Consider the evolution of $b-\widetilde{b}$:
\begin{equation}\nonumber
    \frac{d}{dt}\left(b-\widetilde{b}\right)=\frac{\mathcal{P}}{2t}\left(1-b^{2}\right)-\frac{\mathcal{P}\mathcal{Q}}{2}\left(1-a^{2}\right)-\mathcal{Q}b-\left(\frac{\mathcal{P}}{2t}\left(1-\widetilde{b}^{2}\right)-\frac{\mathcal{P}\mathcal{Q}}{2}\left(1-\widetilde{a}^{2}\right)-\mathcal{Q}\widetilde{b}\right).
\end{equation}
At $t=t_{f}$ we have $\widetilde{b}(t_{f})=b(t_{f})$ and $\widetilde{a}(t_{f})<a(t_{f})$. Consequently:
\begin{equation}
    \frac{d}{dt}\left(b-\widetilde{b}\right)_{|_{t_{f}}}=\frac{\mathcal{P}\mathcal{Q}}{2}\left(a(t_{f})-\widetilde{a}(t_{f})\right)>0.
\end{equation}
which leads to a contradiction as above.
\end{proof}

\begin{proposition}
Suppose that $(a,b)$ is a solution of \ref{InvSpin(7)InstantonEqSO(3)} defined in a neighbourhood of $t_{0}>0$. Take initial data at $t_{0}$ satisfying $a(t_{0})>0$, $b(t_{0})<0$ and flow backwards. Then $a\to +\infty$ as $t\to t_{\text{blowup}}\ge 0$.
\end{proposition}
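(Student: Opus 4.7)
The plan is to leverage the logarithmic form $\frac{d}{dt}\log a = (\mathcal{P}b - 1)/t$ of the first equation in (\ref{InvSpin(7)InstantonEqSO(3)}): whenever $b < 0$ the right-hand side is negative, so $a$ is strictly increasing as one flows backward. The first step will therefore be to show that $b$ stays negative throughout the backward lifespan, which then reduces the problem to two cases according to whether $t_{\text{blowup}} = 0$ or $t_{\text{blowup}} > 0$.

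For the sign of $b$ I would use a first-time barrier argument at $b = 0$. Direct substitution combined with $\mathcal{Q}(t) = t/(t^2+1)$ yields the identity
\[
\frac{db}{dt}\bigg|_{b=0} = \frac{\mathcal{P}}{2t}\cdot\frac{1 + t^2 a^2}{t^2+1} > 0.
\]
If $b$ were to reach zero backward, the largest such $t_1 < t_0$ would satisfy $b(t_1)=0$ and $b|_{(t_1,t_0]} < 0$, forcing $\frac{db}{dt}(t_1) \le 0$ by the sign of the one-sided difference quotient. This contradicts the positivity above; hence $b < 0$ throughout the backward lifespan. In particular $b < 1/\mathcal{P}$, so $\frac{d}{dt}\log a < 0$ and $a$ is monotone increasing backward from $a(t_0) > 0$.

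If $t_{\text{blowup}} = 0$, I would integrate $\frac{d}{dt}\log a = (\mathcal{P}b - 1)/s$ using $b<0$ and $\mathcal{P}>0$ to obtain $\log a(t) \ge \log a(t_0) + \log(t_0/t) \to +\infty$ as $t\to 0^+$. If instead $t_{\text{blowup}} > 0$, the standard Picard existence theorem forces $|a(t)|+|b(t)|\to\infty$ as $t\to t_{\text{blowup}}^+$, since the right-hand side of the ODE is smooth in a neighbourhood of $t_{\text{blowup}}$. The monotone $a$ then has a limit in $(0,+\infty]$; if this limit were some finite $A$, I would rule out $b\to-\infty$ by a second barrier argument at $b=-M$. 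On $[t_{\text{blowup}}, t_0]$ all coefficients are bounded, which gives
\[
\frac{db}{dt}\bigg|_{b=-M} \le -cM^2 + CM + C'(1+A^2)
\]
for suitable positive constants $c,C,C'$; this is strictly negative for $M$ sufficiently large. The first-time argument then prevents $b$ from reaching $-M$, bounding $(a,b)$ on $(t_{\text{blowup}}, t_0]$ and contradicting blowup. Hence $a$ must diverge.

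The main obstacle I anticipate is Case 2, where a priori $b$ could drive the solution singular while $a$ stays bounded. The crucial structural observation that resolves it is that the quadratic term $-\frac{\mathcal{P}}{2t}b^2$ in the evolution of $b$ provides strong self-confinement of $b$ on any subinterval bounded away from $t=0$, so once $a$ is controlled $b$ cannot run off to $-\infty$; the elementary monotonicity-plus-barrier approach therefore suffices and there is no need to track the full phase portrait.
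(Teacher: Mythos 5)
Your proposal is correct, and it reaches the paper's key differential inequality $\dot{a}<-a/t$ (hence $a(t)\ge a(t_{0})t_{0}/t$) by a slightly different route. The paper establishes the sign condition by deriving a differential inequality for the product $ab$ and comparing backwards against the linear ODE $\dot{u}=-(\mathcal{Q}+\tfrac{1}{t})u$, concluding $ab<0$ persists; you instead run a first-crossing barrier argument at $b=0$, using the identity $\dot{b}|_{b=0}=\frac{\mathcal{P}}{2t}\cdot\frac{1+t^{2}a^{2}}{t^{2}+1}>0$ (which is the same computation the paper uses later, in the forward-time Corollary on positivity of $b$). Both routes yield $ab<0$ on the backward lifespan and then the identical comparison $a(t)\ge a(t_{0})t_{0}/t$. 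Where your argument genuinely adds something is the case $t_{\text{blowup}}>0$: the paper's proof only produces the lower bound $a\gtrsim 1/t$, which forces $a\to+\infty$ when the backward lifespan reaches $t=0$ but does not by itself show that a blowup at positive time is a blowup of $a$ to $+\infty$ rather than of $b$ to $-\infty$. Your second barrier at $b=-M$, exploiting the confining term $-\frac{\mathcal{P}}{2t}b^{2}$ on an interval bounded away from $t=0$, closes exactly that loophole and makes the statement as literally phrased ($a\to+\infty$ as $t\to t_{\text{blowup}}\ge 0$) fully justified. The only implicit point worth flagging is that writing $\frac{d}{dt}\log a$ presupposes $a>0$ throughout the backward lifespan; this is immediate from the linearity of the $a$-equation in $a$ (or the paper's Proposition on sign preservation of $a$), but it should be said before the logarithm is taken.
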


\begin{proof}
We will bound $a$ from below by a function $v$ satisfying $v\to +\infty$ as $t\to 0$.

$\newline$
Consider the evolution of the product $ab$. Using the equations (\ref{InvSpin(7)InstantonEqSO(3)}), compute:
\begin{align}
    \frac{d}{dt}(ab)|_{t}&=\dt{a}b+a\dt{b}\nonumber\\
    &=\frac{P}{2t}ab^{2}+\frac{\mathcal{P}}{2t\left(t^{2}+1\right)}a+\frac{\mathcal{P}\mathcal{Q}}{2}a^3-\left(\mathcal{Q}+\frac{1}{t}\right)ab
    >-\left(\mathcal{Q}+\frac{1}{t}\right)ab,
\end{align}
where in the last line we used the fact that $a>0$ for all time. By comparison, flowing backwards in time, $ab$ stays below the solution of the I.V.P:
\begin{equation}\nonumber
    \begin{cases}
      & \dt{u}(t)=-\left(\mathcal{Q}+\frac{1}{t}\right)v,\\
      & u(t_{0})=a(t_{0})b(t_{0}).
    \end{cases}
\end{equation}
By assumption, the initial data satisfy:
\begin{equation}
    a(t_{0})b(t_{0})<0.\nonumber
\end{equation}
Consequently, $u<0$ for all $0<t<t_{0}$ and we conclude that the same is true of $ab$.

$\newline$
This allows us to estimate:
\begin{equation}\nonumber
    \dt{a}(t)=\frac{\mathcal{P}ab}{t}-\frac{a}{t}<-\frac{a}{t}.
\end{equation}
Consequently, a lies above the solution to the following I.V.P backwards of $t_{0}$:
\begin{equation}\nonumber
    \begin{cases}
      & \dt{v}(t)=-\frac{v}{t},\\
      & v(t_{0})=a(t_{0}).
    \end{cases}
\end{equation}
This is easily solved explicitly and we obtain the inequality:
\begin{equation}\nonumber
a(t)\ge\frac{a(t_{0})t_{0}}{t}\text{ for all }0<t\le t_{0}.
\end{equation}
\end{proof}
\begin{corollary}\label{bPos}
Let $T>0$ and let $(a,b)\in C^{1}[0,T]$ be a solution of \ref{InvSpin(7)InstantonEqSO(3)} satisfying $a\neq 0$. We have that $b(t)>0$ for all $t\ge 0$ for which the solution exists.
\end{corollary}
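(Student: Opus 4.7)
The plan is to reduce the statement to three case analyses: ruling out $b<0$ on $(0,T]$, ruling out $b=0$ in the interior $(0,T]$, and ruling out $b(0)=0$. The first case will follow directly from the preceding proposition; the second from a short direct computation of $\dot b$ at $b=0$; the third from the $1/t$ singularity in the $\dot b$ equation.

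First, by Proposition~\ref{aNotZero}, $a$ has constant nonzero sign on $(0,T]$, and by Proposition~\ref{SystemSymmetry} I may flip that sign. I would therefore assume without loss of generality that $a(t)>0$ on $(0,T]$. To rule out $b(t_*)<0$ for any $t_*\in(0,T]$, I would apply the preceding proposition with initial time $t_*$: the backward-flow estimate $a(t)\ge a(t_*)t_*/t$ on $(0,t_*]$ forces $a(t)\to+\infty$ as $t\to 0^+$, contradicting the continuity of $a$ on $[0,T]$. Hence $b\ge 0$ throughout $(0,T]$.

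To upgrade to strict positivity on $(0,T]$, suppose $b(t_*)=0$ for some $t_*>0$. I would substitute $b=0$ directly into the ODE for $\dot b$ and simplify using $\mathcal{Q}(t)=t/(t^2+1)$:
\[
\dot b(t_*)=\frac{\mathcal{P}(t_*)}{2}\left[\frac{1}{t_*}-\mathcal{Q}(t_*)\bigl(1-a(t_*)^2\bigr)\right]=\frac{\mathcal{P}(t_*)\bigl(1+t_*^2 a(t_*)^2\bigr)}{2t_*(t_*^2+1)}>0.
\]
Thus $b$ is strictly increasing through zero at $t_*$, so $b(t)<0$ for $t$ slightly less than $t_*$, contradicting the previous step.

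Finally, to rule out $b(0)=0$, I would observe that the $\dot b$ equation contains the singular term $\mathcal{P}(t)\bigl(1-b(t)^2\bigr)/(2t)$. Since $\mathcal{P}(0)=2>0$ and $b(t)^2\to 0$, this term diverges to $+\infty$ as $t\to 0^+$, while the remaining terms on the right-hand side stay bounded. This contradicts the continuity of $\dot b$ on $[0,T]$ guaranteed by $(a,b)\in C^1[0,T]$. The main conceptual work is packaged in the preceding proposition; the principal obstacle I anticipate is organizing the boundary case $t=0$ separately, since there the $1/t$ singularity prevents a uniform application of that proposition and one must exploit the singular structure of the ODE itself.
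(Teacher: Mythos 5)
Your proposal is correct and follows essentially the same route as the paper: reduce to positive times via the backward-blowup proposition (using the $(a,b)\mapsto(-a,b)$ symmetry to fix the sign of $a$), rule out interior zeros of $b$ by the sign of $\dot b$ at $b=0$, and handle $t=0$ via the $1/t$ singularity in the $\dot b$ equation. The only cosmetic difference is that you normalize $a>0$ once at the outset rather than splitting into sign cases, and you rule out only $b(0)=0$ where the paper notes more generally that smoothness forces $b(0)=\pm 1$; both amount to the same argument.
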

\begin{proof}
Trivially, $b(0)=\pm1$. If not, then $\dt{b}(t)$ blows up as $t\to 0$. Hence it suffices to prove the result for $t>0$. If we achieve this, the possibility that $b(0)=-1$ is excluded by continuity and thus we have that $b(0)=1$.

$\newline$
Suppose that for some $t_{0}>0$, $b(t_{0})<0$. We have that $a(t_{0})\neq 0$ by assumption. If $a(t_{0})>0$, the above proposition implies that $a$ blows up to $+\infty$ near $t=0$ contradicting the boundedness of the solution. If $a(t_{0})<0$, then $-a(t_{0})>0$. Since $(-a,b)$ is a solution, $-a$ blows up to $+\infty$ near $t=0$. Hence, $a$ blows up to $-\infty$ near $t=0$.

$\newline$
Suppose that $b(t_{0})=0$ for some $t_{0}>0$. At such a point we have:
\begin{equation}
    \dt{b}(t_{0})=\frac{\mathcal{P}(t_{0})}{2t_{0}(t_{0}^2+1)}+\frac{\mathcal{P}(t_{0})\mathcal{Q}(t_{0})}{2}a^2(t_{0})>0.\nonumber
\end{equation}
It follows that $b(t)<0$ for some $0<t<t_{0}$ and this brings us to the previous case.
\end{proof}
Putting the above together: if $(a,b)$ is a global solution of \ref{InvSpin(7)InstantonEqSO(3)}, either $a=0$ identically or the sign of $a$ is fixed and $b>0$.

\subsubsection{Solutions Extending on \texorpdfstring{$P_{\pi_{2}}$}{text}: An Explicit Family of Spin(7)-Instantons Containing a Unique HYM Connection}\label{Pi2Sols}
We classify the solutions that extend on $P_{\pi_{2}}$. In the division suggested in the final remark of the preceding subsection, this corresponds to the case $a=0$. The system (\ref{InvSpin(7)InstantonEqSO(3)}) reduces to a single nonlinear ODE that we can solve explicitly. We thus exhibit an explicit $1$-parameter family of Spin(7) instantons only one of which is HYM. This resolves (negatively) the question regarding the equivalence of the two gauge theoretic problems.

$\newline$
Owing to proposition \ref{PPi2ExtendCondition}, solutions extending to $P_{\pi_{2}}$ must satisfy $b(0)=-1$. Due to corollary \ref{bPos},the $a$-component of such a solution must vanish identically. The system (\ref{InvSpin(7)InstantonEqSO(3)}) reduces to the following ODE:
\begin{equation}\label{aZero}
\frac{db}{dt}=-\frac{\mathcal{P}\mathcal{Q}}{2}+\frac{\mathcal{P}}{2t}\left(1-b^2\right)-\mathcal{Q}b.
\end{equation}
This can be solved explicitly. We fix a positive reference time and parameterize solutions by their value at that time. We choose to work with $t_{\text{ref}}=\frac{\sqrt{6}}{2}$ (corresponding to $r_{\text{ref}}=2$). This choice is arbitrary. Note that our approach excludes solutions blowing up at $t_{\text{ref}}$. This is not an issue as we are only interested in global instantons. Writing: $\nu=b\left(t_{\text{ref}}\right)$, the associated solution to (\ref{aZero}) takes the form:
\begin{equation}\label{bPi2}
b_{\nu}(t)=\frac{\sqrt{2}}{2}\left(1+\frac{\sqrt{6}-\nu\sqrt{10t^{2}+15}}{\sqrt{30}\nu+\sqrt{6}-\left(\sqrt{5}\nu+2\right)\sqrt{2t^{2}+3}}\right)\frac{1}{\sqrt{t^2+1}}.
\end{equation}
Corresponding to $b_{\nu}$ there is a local Spin(7) instanton (\ref{Pi2InvCon}) on the restriction of $P_{\text{Id}}$ over an open submanifold of the form:
\begin{equation}
    (t_{\text{ref}}-\delta,t_{\text{ref}}+\delta)\times\frac{\text{SO}(5)}{\text{SO}(3)}\subset X^{8}.
\end{equation}
An elementary calculation yields the values of $\nu$ for which there exists a finite blowup time:
\begin{proposition}
Let $\nu \in (-\infty,-\frac{2\sqrt{5}}{5})\cup (\frac{\sqrt{10}}{5},\infty)$.
The connection $A_{\nu}$ blows up (as witnessed -for instance- by a blowup of the pointwise curvature norm) at time $t_{\text{blowup}}(\nu)$ given by:
\begin{equation}
t_{\text{blowup}}(\nu)=\frac{\sqrt{6}}{2}\frac{\sqrt{5\nu^{2}-2}}{\sqrt{5}\nu+2}.
\end{equation}
\end{proposition}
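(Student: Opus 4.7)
Since $a\equiv 0$ along these solutions, the connection $A_\nu$ is determined entirely by $b_\nu$, and its pointwise curvature norm depends (via (\ref{InvCurvSO(3)Id}) and the Stenzel metric coefficients) only on $b$, $b^{2}$, and $db/dt$. Hence a pointwise blowup of $\|F_{A_\nu}\|_g$ along an orbit $\mathcal{O}_r$ is equivalent to a divergence of $b_\nu$ itself. The plan is therefore to identify the smallest positive time at which the explicit formula (\ref{bPi2}) becomes singular and show that this time agrees with the stated $t_{\text{blowup}}(\nu)$, valid precisely when $\nu$ lies in the specified interval.

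\textbf{Step 1: locate the singularity.} Set
\[
D(t) \defeq \sqrt{30}\,\nu+\sqrt{6}-(\sqrt{5}\,\nu+2)\sqrt{2t^{2}+3},\qquad N(t)\defeq\sqrt{6}-\nu\sqrt{10t^{2}+15}.
\]
A direct check at $t_{\text{ref}}=\sqrt{6}/2$ gives $D(t_{\text{ref}})=-\sqrt{6}\neq 0$, confirming regularity at the reference time. Solving $D(t)=0$ amounts to
\[
\sqrt{2t^{2}+3}=\frac{\sqrt{6}(\sqrt{5}\,\nu+1)}{\sqrt{5}\,\nu+2},
\]
and squaring yields
\[
t^{2}=\frac{3(5\nu^{2}-2)}{2(\sqrt{5}\,\nu+2)^{2}},
\]
from which the stated formula for $t_{\text{blowup}}(\nu)$ follows by taking the positive root. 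One must separately verify that $N$ does not vanish simultaneously with $D$ at this $t$, so that the singularity is genuine rather than removable; this reduces to a short algebraic check that the system $N=D=0$ forces $\nu$ into a pair of exceptional values outside the claimed range.

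\textbf{Step 2: identify the admissible range of $\nu$.} For the equation $D(t)=0$ to have a solution with $t>0$, two conditions must hold simultaneously: the right-hand side of the displayed identity must be positive (forcing $\sqrt{5}\,\nu+1$ and $\sqrt{5}\,\nu+2$ to have the same sign), and the value produced must be $\geq\sqrt{3}$ since $\sqrt{2t^{2}+3}\geq\sqrt{3}$ for real $t$. Squaring the inequality yields $5\nu^{2}\geq 2$, i.e.\ $|\nu|\geq\sqrt{10}/5$. Combining with the sign condition excludes the band $[-2\sqrt{5}/5,\sqrt{10}/5]$, producing exactly the two intervals $\nu>\sqrt{10}/5$ and $\nu<-2\sqrt{5}/5$ appearing in the proposition (the boundary value $\nu=\sqrt{10}/5$ corresponds to $t_{\text{blowup}}=0$, i.e.\ the connection just barely extends to the singular orbit and is handled separately).

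\textbf{Step 3: translate into curvature blowup and identify the obstacle.} With $a\equiv 0$, formula (\ref{InvCurvSO(3)Id}) reduces the curvature of $A_\nu$ to a linear combination whose coefficients are polynomials in $b$ together with $db/dt$. Since $b_\nu(t)\sim C/D(t)$ near a simple zero of $D$, both $b_\nu$ and $db_\nu/dt$ diverge, and pairing with the (smooth, nondegenerate) Stenzel frame norms (\ref{X1lengthCY})--(\ref{X567lengthCY}) forces $\|F_{A_\nu}\|_g^{2}\to\infty$. I anticipate the main obstacle to be the sign/absolute-value bookkeeping in Step 2: when $\nu<-2\sqrt{5}/5$, the denominator $\sqrt{5}\,\nu+2$ is negative, so one must interpret the closed-form expression $t_{\text{blowup}}(\nu)$ via the positive square root of $t^{2}$ and verify that the resulting positive time indeed lies on the flow's maximal interval of existence (rather than, for instance, corresponding to a backward blowup beyond $t=0$). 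A monotonicity argument for $D(t)$ in $t$, combined with the explicit value $D(t_{\text{ref}})=-\sqrt{6}$, handles this by pinpointing $t_{\text{blowup}}(\nu)$ as the unique positive zero of $D$ and by determining whether it lies above or below $t_{\text{ref}}$.
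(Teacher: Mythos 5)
Your proposal is correct and is essentially the "elementary calculation" the paper leaves implicit: locate the unique positive zero of the denominator of the explicit solution (\ref{bPi2}), check the numerator does not vanish there, and determine for which $\nu$ such a positive zero exists. You also rightly flag that for $\nu<-\tfrac{2\sqrt{5}}{5}$ the quantity $\sqrt{5}\nu+2$ is negative, so the stated closed form must be read as the positive root of $t^{2}=\tfrac{3(5\nu^{2}-2)}{2(\sqrt{5}\nu+2)^{2}}$ (i.e.\ with $|\sqrt{5}\nu+2|$ in the denominator) — a sign subtlety the paper's formula glosses over.
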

For $\nu$ outside of the range considered in the proposition, the solutions stay bounded for all time. These considerations lead to the following existence/classification result:
\begin{theorem}
Let $\nu\in [-\frac{2\sqrt{5}}{5},\frac{\sqrt{10}}{5})$. The connection $A_{\nu}$ is a smooth \emph{Spin(7)} instanton on the extended bundle $P_{\pi_{2}}$. Furthermore, these are all the invariant \emph{Spin(7)} instantons on $P_{\pi_{2}}$.
\end{theorem}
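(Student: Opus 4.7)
The plan is to split the statement into a classification part and a range-of-parameters part. Let $A$ be any invariant Spin(7) instanton on $P_{\pi_{2}}$. Its restriction over $X^{8}-S^{4}$ is described by a pair $(a,b)\in C^{\infty}(0,\infty)$ solving (\ref{InvSpin(7)InstantonEqSO(3)}). Proposition~\ref{PPi2ExtendCondition} forces $\lim_{t\to 0^{+}}b(t)=-1$, so in particular $b(t)<0$ for small $t$. By Corollary~\ref{bPos}, a nonvanishing $a$ would force $b>0$ throughout its lifespan, which is incompatible with $b(0)=-1$. Hence $a\equiv 0$ and the system collapses to the single ODE (\ref{aZero}). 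By Picard uniqueness, any solution defined at $t_{\text{ref}}=\sqrt{6}/2$ is determined by $\nu:=b(t_{\text{ref}})$, so it must equal the $b_{\nu}$ given by the explicit formula (\ref{bPi2}).

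It remains to determine for which $\nu$ the function $b_{\nu}$ produces a smooth instanton on $P_{\pi_{2}}$. With $a\equiv 0$, the Eschenburg--Wang conditions of Proposition~\ref{PPi2ExtendCondition} reduce to four requirements on $b_{\nu}$: global existence on $[0,\infty)$, smoothness from the right at $t=0$, evenness, and $b_{\nu}(t)+1=O(t^{2})$. Evenness is manifest from (\ref{bPi2}), which is a rational expression in the even functions $\sqrt{t^{2}+1}$, $\sqrt{2t^{2}+3}$ and $\sqrt{10t^{2}+15}$. An elementary algebraic simplification of (\ref{bPi2}) at $t=0$ yields the identity $b_{\nu}(0)=-1$ whenever the denominator is nonzero there; combined with evenness, this delivers both the $O(t^{2})$ vanishing and the smoothness automatically.

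The substantive step is therefore to pin down exactly when $b_{\nu}$ is globally defined. By the preceding proposition, the denominator in (\ref{bPi2}) vanishes at some positive time precisely when $\nu\in(-\infty,-2\sqrt{5}/5)\cup(\sqrt{10}/5,\infty)$; for $\nu$ in the complementary closed interval $[-2\sqrt{5}/5,\sqrt{10}/5]$, the blowup equation $(\sqrt{5}\nu+2)\sqrt{2t^{2}+3}=\sqrt{6}(\sqrt{5}\nu+1)$ admits no positive solution in $t$. At the right endpoint $\nu=\sqrt{10}/5$ the blowup formula evaluates to $t_{\text{blowup}}=0$, so the denominator vanishes precisely at the singular orbit and smoothness fails --- this value must be excluded. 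At the left endpoint $\nu=-2\sqrt{5}/5$ the factor $\sqrt{5}\nu+2$ vanishes identically, the denominator of (\ref{bPi2}) collapses to the nonzero constant $-\sqrt{6}$, and $b_{\nu}$ remains smooth on all of $[0,\infty)$ --- this value is retained. Taken together, these cases pin down the range $\nu\in[-2\sqrt{5}/5,\sqrt{10}/5)$.

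The main obstacle I anticipate is the delicate endpoint analysis in the last paragraph: simultaneously justifying the exclusion of $\sqrt{10}/5$ (because the blowup time migrates onto the singular orbit) and the inclusion of $-2\sqrt{5}/5$ (where an algebraic cancellation renders the denominator nonvanishing throughout $[0,\infty)$). Every other step is a direct consequence of results already established in the excerpt.
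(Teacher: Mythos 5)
Your overall strategy coincides with the paper's: force $a\equiv 0$ via Proposition \ref{PPi2ExtendCondition} and Corollary \ref{bPos}, reduce to the single ODE (\ref{aZero}), parameterize by $\nu=b(t_{\text{ref}})$, read off global existence from the blowup proposition, and check the Eschenburg--Wang conditions (evenness plus $b_{\nu}(0)=-1$ gives the $O(t^{2})$ vanishing since a smooth even function has no linear term). The identity $b_{\nu}(0)=-1$ for all $\nu$ with nonvanishing denominator at $t=0$ is correct, and your treatment of the left endpoint $\nu=-2\sqrt{5}/5$ (the coefficient $\sqrt{5}\nu+2$ vanishes and the denominator collapses to the constant $-\sqrt{6}$) is also correct.

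The one genuine error is your justification for excluding $\nu=\sqrt{10}/5$. You claim that there ``the denominator vanishes precisely at the singular orbit and smoothness fails.'' The denominator of (\ref{bPi2}) does vanish at $t=0$ for this value of $\nu$ --- but so does the numerator, and the singularity is removable: for $\nu=\sqrt{10}/5$ one has $\sqrt{5}\nu=\sqrt{2}$, the fraction in (\ref{bPi2}) simplifies identically to $\sqrt{2}-1$, and $b_{\sqrt{10}/5}(t)=1/\sqrt{t^{2}+1}$, which is smooth and even on all of $[0,\infty)$. This is exactly the solution underlying $A_{\text{HYM}_{\pi_{1}}}$. The correct reason for exclusion is not a failure of smoothness but a failure of the boundary condition: $b_{\sqrt{10}/5}(0)=+1$, whereas extension over $P_{\pi_{2}}$ requires $\lim_{t\to 0}b=-1$ (condition (\ref{ContPi2}), equivalently the $O(t^{2})$ requirement on $b+1$ in Proposition \ref{PPi2ExtendCondition}); this solution extends over $P_{\pi_{1}}$ instead. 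This is precisely how the paper excludes it. Note also that your own first paragraph already contains the fix --- the identity $b_{\nu}(0)=-1$ holds only ``whenever the denominator is nonzero there,'' and $\nu=\sqrt{10}/5$ is exactly the exceptional case --- so the repair is one line, but as written the endpoint analysis you flagged as the delicate step rests on a false assertion.
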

\begin{proof}
For $\nu\in [-\frac{2\sqrt{5}}{5},\frac{\sqrt{10}}{5}]$, the function $b_{\nu}$ is of class $C^{\infty}[0,\infty)$. We need to verify the extension conditions of proposition \ref{PPi2ExtendCondition}. In particular we need to prove that $b_{\nu}(t)+1$ is even and $O(t^2)$ at $t=0$. We immediately exclude $\nu=\frac{\sqrt{10}}{5}$ as the associated solution satisfies $b(0)=1$ and consequently fails the second extension condition. For $\nu\in [-\frac{2\sqrt{5}}{5},\frac{\sqrt{10}}{5})$, the first condition is clear by looking at the formula for $b_{\nu}$. The second condition is easily established by computing that:
\begin{equation}
    b_{\nu}(0)+1=\dt{b}(0)=0.
\end{equation}
For uniqueness, we note that any invariant Spin(7) instanton on $P_{\pi_{2}}$ obeys \ref{aZero} and all other solutions of this equation blow up.
\end{proof}
The HYM connection $A_{\text{HYM}_{\pi_{2}}}$ lies in the interior of this family and corresponds to the choice $\nu=-\frac{\sqrt{10}}{5}$. It is the only HYM connection in the family. The boundary point $\nu_{\partial}=-\frac{2\sqrt{5}}{5}$ corresponds to the solution:
\begin{equation}
    b_{\nu_{\partial}}(t)=-\frac{\sqrt{3}}{3}\frac{\sqrt{2t^{2}+3}}{\sqrt{t^{2}+1}}.
\end{equation}
The associated Spin(7) instanton $A_{\nu_{\partial}}$ differs from the others in that it yields a different limiting connection on $P_{\text{Id}}$ over the Stiefel manifold $\frac{\text{SO}(5)}{\text{SO}(3)}$ at infinity. In particular, for $\nu\in \left(-\frac{2\sqrt{5}}{5},\frac{\sqrt{10}}{5}\right)$ it is easily seen that:
\begin{equation}\nonumber
    \lim_{t\to\infty}b_{\nu}(t)=0.
\end{equation}
The associated limiting connection is therefore equal to $A^{\text{can}}_{P_{\text{Id}}}$. However, for $\nu=\nu_{\partial}$, we have:
\begin{equation}\nonumber
    \lim_{t\to\infty}b_{\nu_{\partial}}(t)=-\frac{\sqrt{6}}{2}.
\end{equation}
and the associated connection at infinity is given by:
\begin{equation}\nonumber
A_{\nu_{\partial}}^{\infty}=A^{\text{can}}_{P_{\text{Id}}}-\frac{\sqrt{6}}{2}\left(\theta^{5}\otimes e_{3}-\theta^{6}\otimes e_{2}+\theta^{7}\otimes  e_{1}\right).
\end{equation}
We have already computed the pointwise curvature norm of $A_{\text{HYM}_{\pi_{2}}}$. Its growth is of order $O(r^{-\frac{3}{2}})$. The rate remains the same across all elements of the family. This includes the boundary point $A_{\nu_{\partial}}$. It follows that it is possible for Spin(7) instantons not to be HYM and yet to have pointwise curvature norm decaying with the same rate as that of an HYM connection on the same bundle.

\subsubsection{Solutions Extending on \texorpdfstring{$P_{\pi_{1}}$}{text}}\label{Pi1Sols}
We now wish to classify solutions that smoothly extend over $P_{\pi_{1}}$. In the previous section we found all solutions where $a=0$. The only one satisfying $b(0)=1$ corresponds to $\nu=\frac{\sqrt{10}}{5}$. The associated instanton is $A_{\text{HYM}_{\pi_{1}}}$. Any other solution would have nonvanishing $a$-component. Consequently, we have to deal with the full system (\ref{InvSpin(7)InstantonEqSO(3)}). The first step is to obtain short time existence and uniqueness near the pole of the ODE. Subsequently, the task is to characterize which of these local solutions survive for all time to yield global Spin(7) instantons. 
\subsubsubsection{Short Time Existence and Uniqueness}
The analysis in this section relies on the method of Eschenburg and Wang (Eschenberg, Wang \cite{Esch}, section 6). We have adapted their existence result to our equation system and refined it to include continuous dependence on initial data. This does not follow from the standard Grönwall estimate as the I.V.P under consideration is singular. The continuity proof is based on the technique employed by Smoller, Wasserman, Yau and McLeod (Smoller, Wasserman, Yau, McLeod \cite{SWYM}, p.147]).
\begin{theorem}\label{ShortTime}
Let $a_{0}$ be a fixed real number. There exists a unique solution:
\begin{equation}\nonumber
\left(a,b\right)_{a_{0}}\in C^{\infty}[0,t_{max}(a_{0}))
\end{equation}
to the system \ref{InvSpin(7)InstantonEqSO(3)} such that:
\begin{align}
&a(0)=0,\label{CondAZero}\\
&\dt{a}(0)=a_{0},\label{CondDerAZero}\\
&b(0)=1.\label{CondBZero}
\end{align}
This solution satisfies the extension conditions of proposition \ref{PPi1ExtendCondition} and thus yields a \emph{Spin(7)} instanton on the restriction of $P_{\pi_{1}}$ over the open submanifold defined by $0\le t<t_{\max}(a_{0})$.

$\newline$
Furthermore, we have that for any $K>0$:
\begin{equation}
T_{K}\defeq\inf\left\{t_{\text{max}}(a_{0})\;|\;a_{0}\in[-K,K]\right\}>0
\end{equation}
and the following mapping is continuous:
\begin{align}
[-K,K]&\to C^{0}\left(\;[0,T_{K}],\mathbb{R}^{2}\right),\nonumber\\
a_{0}&\mapsto \left(a,b\right)_{a_{0}}.
\end{align}
\end{theorem}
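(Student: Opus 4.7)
The plan is to adapt the Fuchsian-ODE technique of Eschenburg-Wang (\cite{Esch}, Section 6) and then supplement it with a parameterised contraction argument in the spirit of \cite{SWYM}.

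I would first desingularise (\ref{InvSpin(7)InstantonEqSO(3)}) by substituting $a(t) = t(a_0 + \hat a(t))$ and $b(t) = 1 + t^2\hat b(t)$ --- an ansatz compatible with proposition \ref{PPi1ExtendCondition} that automatically enforces (\ref{CondAZero})--(\ref{CondBZero}). Using $\mathcal P(t) - 2 = O(t^2)$ and $\mathcal Q(t) = t + O(t^3)$, the first equation rewrites as the regular ODE $\dot{\hat a} = t(a_0 + \hat a)\phi(t, \hat b)$ for a smooth $\phi$. The second equation becomes $t\dot{\hat b} + (\mathcal P + 2)\hat b = \tilde G(t, \hat a, \hat b; a_0)$; demanding a bounded right-hand side at $t=0$ forces $\hat b(0) = -\tfrac12$, and writing $\hat b = -\tfrac12 + \tilde b$ yields the Fuchsian equation $t\dot{\tilde b} + 4\tilde b = F(t, \hat a, \tilde b; a_0)$ with $F$ smooth and $F = O(t^2)$ uniformly near the origin, the order-two vanishing following from $\mathcal P - 2 = O(t^2)$ and cancellations at $\tilde b = 0$.

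Using the integrating factor $t^4$ yields the Volterra-type representation $\tilde b(t) = t^{-4}\int_0^t s^3 F\,ds$, which together with $\hat a(t) = \int_0^t s(a_0 + \hat a)\phi\,ds$ defines a map $\mathcal T_{a_0}$ on a small closed ball in $C^0([0, T], \mathbb R^2)$. The $O(t^2)$ vanishing of $F$ and the polynomial $t$ in the $\hat a$-integrand give the Lipschitz constant of $\mathcal T_{a_0}$ as $O(T^2)$, so $\mathcal T_{a_0}$ is a contraction for $T$ small enough, producing a unique continuous fixed point by Banach's theorem. Smoothness on $[0, t_{\max}(a_0))$ follows by bootstrapping the ODE, and the parity of $a$ (odd) and $b - 1$ (even) follows from uniqueness applied to the reflected pair $(-a(-t), b(-t))$. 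Combined with the $O(t), O(t^2)$ vanishing built into the ansatz, this verifies the hypotheses of proposition \ref{PPi1ExtendCondition} and produces the claimed Spin(7) instanton extension.

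For continuity in $a_0$, the standard Grönwall approach fails because of the pole at $t=0$. Instead, I would observe that $\phi$ and $F$ depend smoothly on $a_0$, so over the compact set $a_0 \in [-K, K]$ the Lipschitz constants of the integrands and the invariant ball radius for $\mathcal T_{a_0}$ may be chosen uniformly: a common $T_K > 0$ and contraction constant $\lambda < 1$ then work simultaneously for all $a_0 \in [-K, K]$. The parameterised Banach fixed-point theorem yields continuity of $a_0 \mapsto (\hat a, \tilde b)_{a_0}$ in $C^0([0, T_K], \mathbb R^2)$, which descends to the claimed continuity of $a_0 \mapsto (a, b)_{a_0}$. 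The main obstacle will be securing these uniform-in-$a_0$ estimates: the parameter $a_0$ enters $F$ nonlinearly through $(a_0 + \hat a)^2$ and through the composition of $\mathcal P b$ with $\hat b = -\tfrac12 + \tilde b$, so one must track how the $O(t^2)$ bound and Lipschitz constant of $F$ scale with $|a_0|$, ultimately calibrating the ball radius $\varepsilon = \varepsilon(K)$ and the time $T_K$ accordingly --- polynomial book-keeping rather than a conceptual difficulty, but where most of the technical work lies.
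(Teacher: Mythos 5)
Your proposal is correct, but it takes a genuinely different route from the paper. The paper runs the general Eschenburg--Wang machinery: it first determines the full formal Taylor series at $t=0$ and shows its coefficients are polynomials in $a_{0}$, then subtracts an order-$m$ truncation with $m>2L$ and sets up the contraction in the weighted spaces $\mathcal{O}_{T}(m)$, where the contraction constant is $L/m$ (independent of $T$) precisely because $T$-smallness alone cannot beat the $1/t$ singularity; smoothness is then obtained by comparing the fixed points of $\Theta_{m}$ and $\Theta_{l}$ for $l>m$. Your low-order substitution $a=t(a_{0}+\hat a)$, $b=1+t^{2}(-\tfrac12+\tilde b)$ instead exploits the specific structure of (\ref{InvSpin(7)InstantonEqSO(3)}): the $a$-equation desingularises completely (since $\mathcal{P}b-2=O(t^{2})$), and the $b$-equation becomes Fuchsian with the strictly positive indicial coefficient $4$, so the integrating factor $t^{4}$ together with the uniform $O(t^{2})$ vanishing of $F$ and of its $(\hat a,\tilde b)$-Lipschitz constant does give a contraction factor $O(T^{2})$ on a plain $C^{0}$ ball --- I have checked these vanishing orders and they are as you claim. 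What your route buys is the elimination of the formal-series bookkeeping (the paper's Propositions \ref{FormalSeries} and \ref{CoeffCts}) and a simpler bootstrap; what it costs is generality, since it hinges on the favourable indicial roots here, whereas the paper's large-$m$ truncation works for any singular IVP admitting a formal solution. Two points you should make explicit to close the argument: (i) for uniqueness you must check that \emph{every} smooth solution with $b(0)=1$ fits your ansatz, i.e.\ that the ODE forces $\dot b(0)=0$ and $\ddot b(0)=-1$ (both follow by taking $t\to 0$ in the equation), so that its $(\hat a,\tilde b)$ lands in the small ball where the fixed point is unique; and (ii) the parity needed for Proposition \ref{PPi1ExtendCondition} is a statement about the one-sided formal Taylor series, so rather than reflecting a solution that only exists for $t\ge 0$ you should note that $\phi$ and $F$ depend on $t$ only through $t^{2}$, whence the Volterra representations propagate evenness of $\hat a$ and $\tilde b$ order by order. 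Your treatment of continuity in $a_{0}$ via a uniform contraction constant over $[-K,K]$ and the parameterised Banach fixed-point theorem is the same idea as the paper's Smoller--Wasserman--Yau--McLeod iteration argument.
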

We will prove this result in four stages. The first step is to study the formal Taylor series of smooth solutions at $t=0$. The second step is to derive and analyze ODEs governing perturbations of high order polynomial truncations of the series. The idea is to show that, if the order is high enough, the resulting ODEs are uniquely soluble for sufficiently short time in suitable Banach spaces. The third step is to argue that the solutions so obtained are smooth and have the correct formal series at $t=0$. The final step is to understand how this existence/ uniqueness argument behaves under change of initial data. This involves proving that the estimates can be made to be uniform in $a_{0}$ for $a_{0}$ in compact sets and establishing the desired continuity result.

\begin{proposition}\label{FormalSeries}
Fix $a_{0}\in\mathbb{R}$. There exists a unique $(a,b)_{a_{0}}\in\mathbb{R}[[t]]^{2}$ solving the system (\ref{InvSpin(7)InstantonEqSO(3)}) and satisfying the conditions (\ref{CondAZero}), (\ref{CondDerAZero}), (\ref{CondBZero}). Here, differentiation is understood in the formal sense (as a derivation of the formal power series ring).
\end{proposition}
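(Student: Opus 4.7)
The plan is to convert the formal ODE system into a coefficient-by-coefficient algebraic recursion. First I clear the pole by rewriting the system as
\begin{align*}
 t\dot{a} &= \mathcal{P} a b - a,\\
 2t\dot{b} &= \mathcal{P}(1-b^{2}) - t\mathcal{P}\mathcal{Q}(1-a^{2}) - 2t\mathcal{Q}b.
\end{align*}
Since $\mathcal{P}$ and $\mathcal{Q}$ are analytic at $0$ with $\mathcal{P}(0)=2$ and $\mathcal{Q}(0)=0$, they admit formal expansions $\mathcal{P}=\sum p_n t^n$ and $\mathcal{Q}=\sum q_n t^n$. Plugging $a=\sum_{n\ge 0} a_n t^n$ and $b=\sum_{n\ge 0} b_n t^n$ into both sides and matching coefficients of $t^n$ reduces the claim to the inductive solvability of the resulting algebraic recursion for $(a_n, b_n)$.

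The recursion is straightforward to isolate. Matching the first equation yields $(n+1) a_n = \sum_{i+j+k=n} p_i a_j b_k$; the only term involving $a_n$ on the right arises from $j=n$ (forcing $i=k=0$), contributing $p_0 b_0 a_n = 2 b_0 a_n$, while $b_n$ never appears because its contribution would require $a_0 \neq 0$. This rewrites as $(n+1-2 b_0) a_n = \Phi_n(a_{<n}, b_{<n})$ for a polynomial $\Phi_n$. Matching the second equation gives $b_0^{2}=1$ at $n=0$ and, for $n\ge 1$, an identity of the form $(2n+4 b_0) b_n = \Psi_n(a_{\le n-1}, b_{\le n-1})$, where the $4 b_0$ term is the net contribution of the two $b_n$-bearing monomials in $-\mathcal{P} b^{2}$.

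The induction now runs cleanly. At $n=0$, the $b$-recursion forces $b_0\in\{\pm 1\}$ and (\ref{CondBZero}) selects $b_0 = 1$; the $a$-recursion becomes $(-1)a_0 = 0$, forcing $a_0 = 0$, consistent with (\ref{CondAZero}). At $n=1$ the leading coefficient $(n+1-2 b_0)=0$ in the $a$-recursion vanishes and one checks that $\Phi_1 = 0$ as well (this is the resonance), so $a_1$ is undetermined and I set it equal to the prescribed $\dot a(0)$ from (\ref{CondDerAZero}); meanwhile $b_1$ is determined since $2n+4 b_0 = 6\neq 0$. For every $n\ge 2$, with $b_0=1$, both $n-1$ and $2n+4=2(n+2)$ are strictly positive, so $(a_n, b_n)$ is uniquely determined by the previously computed data.

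The main obstacle is the resonance at $n=1$ in the $a$-equation, which is the algebraic reason that $\dot{a}(0)$ is a free parameter in the local moduli; verifying $\Phi_1 = 0$ is the small computation that makes the argument go through. One parity observation worth recording for later use is that $\mathcal{P}$ is even and $\mathcal{Q}$ is odd in $t$, so the rewritten system is invariant under $(a, b, t) \mapsto (-a, b, -t)$; combined with uniqueness, this forces $a$ to be odd and $b-1$ to be even in $t$, matching the parity demanded by proposition \ref{PPi1ExtendCondition}.
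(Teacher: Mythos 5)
Your proof is correct and follows essentially the same strategy as the paper's: a coefficient-by-coefficient recursion in which the indicial coefficients $(n+1-2b_{0})$ and $(2n+4b_{0})$ are nonzero except at the single resonance $n=1$ in the $a$-equation, where the consistency condition $\Phi_{1}=0$ holds (because $a_{0}=0$) and $\dot a(0)$ becomes the free parameter. The only organizational difference is that the paper establishes the parity of $a$ and $b$ first and recurses over odd/even coefficients only, whereas you run the full recursion and deduce parity afterwards from the symmetry $(a,b,t)\mapsto(-a,b,-t)$ and uniqueness; both are valid.
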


\begin{proof}
Considering the ODEs governing $a(-t)$ and $b(-t)$ and invoking local uniqueness, we find that $b$ is even and $a$ is odd. This allows us to write:
\begin{equation}
    a=\sum_{k=0}^{\infty}\frac{a_{k}}{(2k+1)!}t^{2k+1},\;\;\;b=\sum_{k=0}^{\infty}\frac{b_{k}}{(2k)!}t^{2k},\;\text{where}\;\;b_{0}=1.
\end{equation}
Using the parity of $a,b$ and the coefficient functions, we introduce the series:
\begin{align}
    a\left(\mathcal{P}b-1\right)&=\sum_{k=0}^{\infty}\frac{c_{k}}{(2k+1)!}t^{2k+1},\;\;\;\;\;\;\;\;\frac{\mathcal{P}\mathcal{Q}}{2}\left(1-a^2\right)=\sum_{k=0}^{\infty}\frac{e_{k}}{(2k+1)!}t^{2k+1},\nonumber\\
    \frac{\mathcal{P}}{2}\left(1-b^{2}\right)&=\sum_{k=0}^{\infty}\frac{d_{k}}{(2k)!}t^{2k}\;d_{0}=0,\;\;\;\;\;\;\;\;\;\;\;\;\;\;\;\;\;\;\;\;\;\;\mathcal{Q}b=\sum_{k=0}^{\infty}\frac{f_{k}}{(2k+1)!}t^{2k+1}.\nonumber
\end{align}
The ODE for $a$ translates to the condition:
\begin{equation}\label{FormalTaylora}
    a_{k}=\frac{c_{k}}{2k+1}\text{ for all }k\ge 0.
\end{equation}
We compute $c_{k}$ in terms of $a_{0},...,a_{k},b_{0},...,b_{k}$. This yields:
\begin{align}
    c_{k}&=\frac{d^{2k+1}}{dt^{2k+1}}_{|_{t=0}}\left(a (\mathcal{P}b-1)\right),\nonumber\\
    &=a_{k}+\mathcal{G}(a_{0},...,a_{k-1},b_{0},...,b_{k}).\nonumber
\end{align}
Here $\mathcal{G}$ denotes some function of coefficients of lower order. We will slightly abuse notation and maintain use of the symbol $\mathcal{G}$ in subsequent calculations -even though the particular function may not be the same. Using (\ref{FormalTaylora}) we obtain:
\begin{equation}
\frac{2k}{2k+1}a_{k}=\mathcal{G}(a_{0},...,a_{k-1},b_{0},...,b_{k}).
\end{equation}
This determines $a_{k}$ in terms of coefficients of lower order provided that $k\neq 0$. We conclude that we are allowed to choose $a_{0}$ freely.

$\newline$
We perform a similar calculation for $b$. The second equation in (\ref{InvSpin(7)InstantonEqSO(3)}) translates to the relation:

\begin{equation}\label{FormalTaylorb}
    b_{k+1}=\frac{d_{k+1}}{2k+2}-e_{k}-f_{k}\text{ for all }k\ge 0.
\end{equation}
We note that $e_{k}$ and $f_{k}$ only involve terms depending on $a_{0},...,a_{k},b_{0},...,b_{k}$ and it is thus unnecessary to compute them. We compute $d_{k+1}$ in terms of $a_{0},...,a_{k},b_{0},...,b_{k+1}$:
\begin{align}
    d_{k+1}&=\frac{d^{2k+2}}{dt^{2k+2}}_{|_{t=0}}\left(\frac{\mathcal{P}}{2}(1-b^{2})\right),\nonumber\\
    &=-2b_{k+1}+\mathcal{G}(b_{0},...,b_{k}).\nonumber
\end{align}
Using (\ref{FormalTaylorb}), we obtain:
\begin{equation}\nonumber
    \frac{k+2}{k+1}b_{k+1}=\mathcal{G}(a_{0},...,a_{k},b_{0},...,b_{k}).
\end{equation}
It follows that $b_{k+1}$ is determined by lower order coefficients for each $k\ge 0$.

$\newline$
The above calculations demonstrate that the formal Taylor series at $0$ is uniquely determined by induction given a choice of $a_{0}\in\mathbb{R}$.
\end{proof}
Although the content of the preceding proposition is enough for the purposes of our existence theorem, continuity requires more refined knowledge of the formal Taylor series. In particular, we are interested in the dependence of its coefficients on $a_{0}$. We explicitly calculate the first few terms of the series associated to some fixed $a_{0}$:
\begin{align}
    a(t)&=a_{0}t-\frac{a_{0}}{3}t^{3}+O(t^{5}),\label{anear0}\\
    b(t)&=1-\frac{t^2}{2}+\left(\frac{3}{8}+\frac{a_{0}^{2}}{6}\right)t^{4}+O(t^{6}).\label{bnear0}
\end{align}
In fact, we are able to obtain the following:
\begin{proposition}\label{CoeffCts}
 The coefficients of the formal Taylor series $(a,b)_{a_{0}}$ are polynomials (possibly of order $0$) in $a_{0}$. 
\end{proposition}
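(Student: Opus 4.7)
The strategy is to refine the inductive argument used in the proof of Proposition \ref{FormalSeries}, tracking the dependence on $a_0$ at each step. The base case is immediate: $a_0$ is a polynomial of degree one in the formal parameter $a_0$ by construction, and $b_0 = 1$ is a (constant) polynomial. Since parity forces $a$ to be odd and $b$ to be even, the entire power series is determined once we know all the $a_k$ and $b_k$, and the recursion derived previously determines these inductively.

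For the inductive step, assume $a_0, a_1, \ldots, a_{k-1}$ and $b_0, b_1, \ldots, b_k$ are all polynomials in $a_0$ with real coefficients. The recursions obtained in Proposition \ref{FormalSeries} take the form
\begin{equation*}
\frac{2k}{2k+1}\, a_k = \mathcal{G}_1(a_0, \ldots, a_{k-1}, b_0, \ldots, b_k),
\end{equation*}
\begin{equation*}
\frac{k+2}{k+1}\, b_{k+1} = \mathcal{G}_2(a_0, \ldots, a_k, b_0, \ldots, b_k),
\end{equation*}
where $\mathcal{G}_1$ and $\mathcal{G}_2$ arise by evaluating the $(2k+1)$-st and $(2k+2)$-nd derivatives at $t=0$ of products involving $a$, $b$, $\mathcal{P}$, and $\mathcal{Q}$. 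Applying the generalized Leibniz rule, each such derivative is a finite sum of monomials in the coefficients $a_j$ and $b_j$ weighted by derivatives of $\mathcal{P}$ and $\mathcal{Q}$ at $0$. Crucially, $\mathcal{P}$ and $\mathcal{Q}$ are smooth functions that do \emph{not} depend on $a_0$, so these weights are fixed real numbers. Hence $\mathcal{G}_1$ and $\mathcal{G}_2$ are polynomial functions of their arguments with real coefficients. Substituting the polynomials in $a_0$ provided by the inductive hypothesis produces polynomials in $a_0$, and the nonzero rational prefactors $\frac{2k+1}{2k}$ and $\frac{k+1}{k+2}$ preserve this property. This establishes the inductive step first for $a_k$, and then (using the updated $a_k$) for $b_{k+1}$.

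There is no real obstacle here; the argument is essentially bookkeeping built on top of the recursion already extracted in Proposition \ref{FormalSeries}. The only point requiring a modicum of care is the ordering of the induction: one must compute $a_k$ before $b_{k+1}$, since the latter depends on the former, exactly as in the proof of the preceding proposition. The explicit low-order computations in (\ref{anear0}) and (\ref{bnear0}) illustrate the polynomial dependence concretely for $k \le 2$ and serve as a sanity check on the general induction.
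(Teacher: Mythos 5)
Your proof is correct and follows essentially the same route as the paper: both arguments proceed by induction on the recurrence relations extracted in Proposition \ref{FormalSeries}, observing that each coefficient is determined from lower-order ones by expressions whose only $a_{0}$-independent data are derivatives of $\mathcal{P}$ and $\mathcal{Q}$ at $t=0$. The paper simply writes the recurrences out explicitly via the Leibniz rule, whereas you invoke the Leibniz structure abstractly; the content is the same.
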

\begin{proof}
This is certainly true for $a_{0},b_{0}$ and $b_{1}$. Repeating the calculations of the preceding proposition, but keeping track of the lower order terms yields the following recurrence relations for the coefficients:
\begin{align}
    a_{k}&=\frac{1}{2k}\sum_{m=1}^{k}\sum_{l=0}^{m}\binom{2k+1}{2m}\binom{2m} {2l}\mathcal{P}^{\left(2(m-l)\right)}_{|_{t=0}}a_{k-m}\;b_{l},\nonumber\\
    b_{k+1}&=-\frac{1}{4k+8}\sum_{m=1}^{k}\sum_{l=0}^{m}\binom{2k+2}{2m}\binom{2m}{2l}\mathcal{P}^{\left(2(k-m)+2\right)}_{|_{t=0}}\;b_{m-l}\;b_{l}-\frac{1}{2k+4}\sum_{l=1}^{k}\binom{2k+2}{2l}b_{k+1-l}\;b_{l}\nonumber\\
    &-\frac{k+1}{2k+4}\left(\mathcal{P}\mathcal{Q}\right)^{(2k+1)}|_{|_{t=0}}+\frac{k+1}{2k+4}\sum_{m=1}^{k}\sum_{l=0}^{m-1}\binom{2k+1}{2m}\binom{2m} {2l+1}\left(\mathcal{P}\mathcal{Q}\right)^{\left(2(k-m)+1\right)}|_{|_{t=0}}\;a_{m-l-1}\; a_{l}\nonumber\\
    &-\frac{k+1}{k+2}\sum_{m=0}^{k}\binom{2k+1} {2m}\mathcal{Q}^{\left(2(k-m)+1\right)}|_{|_{t=0}}b_{m}.\nonumber
\end{align}

$\newline$
The result follows by induction.
\end{proof}
We now discuss how to use this formal series in order to obtain an honest solution of the system (\ref{InvSpin(7)InstantonEqSO(3)}). For ease of exposition, we introduce the following functions:
\begin{align}
F_{1}(t,u,v)&\defeq u\left(\mathcal{P}(t)\; v-1\right),\nonumber\\
F_{2}(t,v)&\defeq\frac{\mathcal{P}(t)}{2}(1-v^{2}),\nonumber\\
F_{3}(t,u,v)&\defeq -\frac{\mathcal{P}(t)\; \mathcal{Q}(t)}{2}(1-u^{2})-\mathcal{Q}(t)\; v.\nonumber
\end{align}
We rewrite the ODE system (\ref{InvSpin(7)InstantonEqSO(3)}) as:
\begin{align}
\frac{da}{dt}&=\frac{F_{1}\left(t,a,b\right)}{t},\nonumber\\
\frac{db}{dt}&=\frac{F_{2}\left(t,b\right)}{t}+F_{3}\left(t,a,b\right).\nonumber
\end{align}
Further, we let $p^{a}_{m}(t,a_{0})$, $p^{b}_{m}(t,a_{0})$ denote the order $m$ Taylor polynomials corresponding to the initial data $a_{0}$. These are obtained by truncating the respective series. We also introduce the following error functions capturing the failure of the Taylor polynomials to solve (\ref{InvSpin(7)InstantonEqSO(3)}):
\begin{align}
&E_{m}^{a}(t,a_{0})\defeq\frac{d}{dt}p_{a}^{m}(t,a_{0})-\frac{F_{1}\left(t,p_{m}^{a}(t,a_{0}),p_{m}^{b}(t,a_{0})\right)}{t},\nonumber\\
&E_{m}^{b}(t,a_{0})\defeq\frac{d}{dt}p_{b}^{m}(t,a_{0})-\frac{F_{2}\left(t,p_{m}^{b}(t,a_{0})\right)}{t}-F_{3}(t,p_{m}^{a}(t,a_{0}),p_{m}^{b}(t,a_{0})).\nonumber
\end{align}
They are smooth and $O(t^{m})$ at $t=0$. To see this, recall that the full formal series was constructed by matching derivatives at the origin. Consequently, the first $m-1$ derivatives of the error functions vanish at $t=0$.

We now introduce the Banach spaces we will be working with. We define:
\begin{equation}
    \mathcal{O}_{T}(m)\defeq\left\{f\in C^{0}[0,T]\;s.t.\;\sup_{t\in [0,T]}\frac{|f(t)|}{t^{m}}<\infty\right\},
\end{equation}
\begin{equation}\nonumber
    \norm{f}_{\mathcal{O}_{T}(m)}\defeq\sup_{t\in [0,T]}\frac{|f(t)|}{t^{m}}.
\end{equation}
We immediately observe that the error functions $E^{a}_{m}, E^{b}_{m}$ lie in this space (they are $O(t^{m})$). Furthermore, the functions $p^{a}_{m},\;p^{b}_{m}-1$ lie in $\mathcal{O}(1)$. In fact -in light of proposition (\ref{CoeffCts})- we have:
\begin{corollary}\label{CtyIntoOm}
$E^{a}_{m}(t,\cdot),\; E^{b}_{m}(t,\cdot)$ define continuous mappings from the space of initial data into $\mathcal{O}(m)$. Similarly, $p^{a}_{m}(t,\cdot),\;p^{b}_{m}(t,\cdot)-1$ define continuous mappings from the space of initial data into $\mathcal{O}(1)$.
\end{corollary}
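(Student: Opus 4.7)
The plan is to leverage Proposition \ref{CoeffCts}: since the coefficients $a_k(a_0)$ and $b_k(a_0)$ are polynomials in $a_0$, the truncations $p^{a}_{m}(t,a_0)$ and $p^{b}_{m}(t,a_0)$ are polynomials in the two variables $(t,a_0)$ jointly. Composing with the smooth functions $\mathcal{P}(t)$, $\mathcal{Q}(t)$ and the polynomial expressions $F_1,F_2,F_3$, the error functions $E^{a}_{m}, E^{b}_{m}$ inherit joint smoothness in $(t,a_0)$. The proof then splits into handling the polynomial part (easy) and the error part (the actual content).

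For the Taylor polynomials, since $a$ is odd and $b_0=1$, one may factor $p^{a}_{m}(t,a_0)=t\,q^{a}_{m}(t,a_0)$ and $p^{b}_{m}(t,a_0)-1=t\,q^{b}_{m}(t,a_0)$ for joint polynomials $q^{a}_{m}, q^{b}_{m}$ in $(t,a_0)$ (the second being in fact $O(t)$). On a compact interval $[0,T]$, the $\mathcal{O}_{T}(1)$-norm of $p^{a}_{m}$ and of $p^{b}_{m}-1$ thus reduces to $\sup_{t\in[0,T]}|q^{\bullet}_{m}(t,a_0)|$, which is a continuous (indeed polynomial) function of $a_0$. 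Applying the same bound to the difference $q^{\bullet}_{m}(t,a_0)-q^{\bullet}_{m}(t,a_0')$ and using uniform continuity of polynomials on compact sets gives continuity of $a_0\mapsto p^{\bullet}_{m}(\cdot,a_0)\in\mathcal{O}_{T}(1)$.

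For the error terms, fix $a_0$. By construction, the formal Taylor series of $(a,b)_{a_0}$ solves the system formally, so plugging the truncations into the ODE leaves a residue whose formal Taylor expansion vanishes to order $m$ in $t$, i.e.\ $\partial_t^{k}E^{\bullet}_{m}(0,a_0)=0$ for $k=0,\ldots,m-1$. Crucially, this vanishing holds for \emph{every} $a_0$. Taylor's theorem with integral remainder (in $t$ alone) therefore gives
\begin{equation}\nonumber
E^{\bullet}_{m}(t,a_0)=\int_{0}^{t}\frac{(t-s)^{m-1}}{(m-1)!}\,\partial_t^{m}E^{\bullet}_{m}(s,a_0)\,ds,
\end{equation}
whence
\begin{equation}\nonumber
\frac{|E^{\bullet}_{m}(t,a_0)|}{t^{m}}\;\leq\;\frac{1}{m!}\sup_{s\in[0,T]}\bigl|\partial_t^{m}E^{\bullet}_{m}(s,a_0)\bigr|.
\end{equation}
Since $\partial_t^{m}E^{\bullet}_{m}$ is jointly smooth on $[0,T]\times\mathbb{R}$, the right-hand side is continuous in $a_0$, giving membership in $\mathcal{O}_{T}(m)$. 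Applying the identical estimate to $E^{\bullet}_{m}(\cdot,a_0)-E^{\bullet}_{m}(\cdot,a_0')$ and invoking uniform continuity of $\partial_t^{m}E^{\bullet}_{m}$ on compact subsets of $[0,T]\times\mathbb{R}$ yields continuity of the map $a_0\mapsto E^{\bullet}_{m}(\cdot,a_0)\in\mathcal{O}_{T}(m)$.

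The main obstacle is arranging continuous dependence of the $\mathcal{O}_{T}(m)$-norm on $a_0$ despite the $1/t^{m}$ weight, which would naively spoil any uniform bound near $t=0$. The integral remainder formula finesses this by converting the $\mathcal{O}_{T}(m)$-estimate into a uniform $C^{0}$-estimate on the $m$-th derivative, which is perfectly well-behaved. In essence, Proposition \ref{CoeffCts} already did the substantive work, and what remains is a clean application of Taylor's theorem.
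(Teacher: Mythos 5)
Your proof is correct and follows exactly the route the paper intends: the corollary is stated as an immediate consequence of Proposition \ref{CoeffCts} (polynomial dependence of the Taylor coefficients on $a_{0}$), and your use of the integral form of the Taylor remainder is the natural way to convert the $O(t^{m})$ vanishing already recorded in the paper into a continuous $\mathcal{O}_{T}(m)$-bound. The only point you pass over quickly is the joint smoothness of $E^{\bullet}_{m}$ in $(t,a_{0})$ despite the division by $t$; this is harmless because $F_{1}\left(0,p^{a}_{m}(0,a_{0}),p^{b}_{m}(0,a_{0})\right)=0$ and $F_{2}\left(0,p^{b}_{m}(0,a_{0})\right)=0$ (as $p^{a}_{m}(0,\cdot)=0$ and $p^{b}_{m}(0,\cdot)=1$), so the quotients are jointly smooth by the same integral-remainder argument.
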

We finally recast the problem as an integral equation for a perturbation of the polynomials $(p_{m}^{a},p_{m}^{b})$. Given a pair of functions $(u,v)\in\mathcal{O}^{\oplus 2}_{T}(m)$ we define:
\begin{align}
    \Theta^{1}_{m,a_{0}}\left(u,v\right)(s)&\defeq\int_{0}^{s}\left(\frac{F_{1}\left(t,p^{a}_{m}(a_{0},t)+u(t),p^{b}_{m}(a_{0},t)+v(t)\right)}{t}-\dt{p}^{a}_{m}(a_{0},t)\right)dt,\nonumber\\
    \Theta^{2}_{m,a_{0}}\left(u,v\right)(s)&\defeq\int_{0}^{s}\left(\frac{F_{2}\left(t,p_{m}^{b}(a_{0},t)+v(t)\right)}{t}+F_{3}(t,p^{a}_{m}(a_{0},t)+u(t),p^{b}_{m}(a_{0},t)+v(t))-\dt{p}^{b}_{m}(a_{0},t)\right)dt.\nonumber
\end{align}
It can be easily checked (by expanding out the integrands, counting order of vanishing and noting that integration raises this by one) that we obtain a nonlinear integral operator:
\begin{equation}
    \Theta_{m,a_{0}}\defeq \Theta_{m,a_{0}}^{1}\times\Theta_{m,a_{0}}^{2}:\mathcal{O}^{\oplus 2}_{T}(m)\to\mathcal{O}^{\oplus 2}_{T}(m).
\end{equation}
The following proposition is the heart of the matter:
\begin{proposition}\label{FixedPoint}
Let $a_{0}$ be fixed. Fix $R>0$. For sufficiently large $m$ (depending on $F_{1},F_{2},F_{3},R$) and sufficiently small $T$ (depending on $m$ and $a_{0}$), the operator $\Theta_{m,a_{0}}$ has a unique fixed point $(u,v)$ in $\overline{B}_{R}(0)\subset\mathcal{O}^{\oplus 2}_{T}(m)$. Furthermore, this fixed point is smooth in $[0,T]$ and the associated solution
\begin{equation}\nonumber
(a,b)\defeq(p^{a}_{m}+u,p_{m}^{b}+v)
\end{equation}
to the system (\ref{Spin(7)InstantonEquation}) satisfies (\ref{CondAZero}), (\ref{CondDerAZero}), (\ref{CondBZero}).
\end{proposition}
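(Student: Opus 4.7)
The approach is the Banach contraction mapping theorem applied to $\Theta_{m,a_{0}}$ on $\overline{B}_{R}(0) \subset \mathcal{O}^{\oplus 2}_{T}(m)$, with $m$ chosen large enough to absorb the singular $1/t$ factor in the ODE and $T$ small enough to make the error term fit inside the ball. First I would verify that $\mathcal{O}_{T}(m)$ is a Banach space, which is routine as it is a closed subspace of $C^{0}[0,T]$ under a stronger norm. Next I rewrite the operator so as to isolate the error term, namely
\begin{equation*}
\Theta^{1}_{m,a_{0}}(u,v)(s) = -\int_{0}^{s} E_{m}^{a}(t,a_{0})\,dt \;+\; \int_{0}^{s} \frac{F_{1}(t,\, p_{m}^{a}+u,\, p_{m}^{b}+v) - F_{1}(t,\, p_{m}^{a},\, p_{m}^{b})}{t}\,dt,
\end{equation*}
and analogously for $\Theta^{2}_{m,a_{0}}$. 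This cleanly separates the deviation due to the polynomial truncation (controlled by the order of vanishing of $E_{m}^{a}, E_{m}^{b}$ at $t=0$) from the Lipschitz-type contribution of the perturbation $(u,v)$.

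The two core estimates are then the following. For the error term, the definition of the weighted norm gives $\bigl\|\int_{0}^{\cdot} E_{m}^{\bullet}\,dt\bigr\|_{\mathcal{O}_{T}(m)} \leq \|E_{m}^{\bullet}\|_{\mathcal{O}_{T}(m)}\, T/(m+1)$, which can be made arbitrarily small by shrinking $T$. For the Lipschitz term, fix $R$ and restrict to $(u,v) \in \overline{B}_{R}(0)$; since $F_{1}, F_{2}, F_{3}$ are smooth, they are Lipschitz in their function arguments on the compact box swept out by the perturbed orbit, with constant $L = L(R)$. The $F_{1}$ and $F_{2}$ contributions carry the singular $1/t$, and since $|u - u'|/t \leq \|u-u'\|_{\mathcal{O}_{T}(m)}\, t^{m-1}$ integrates to $s^{m}/m$ times the norm, one obtains
\begin{equation*}
\bigl\|\Theta_{m,a_{0}}(u,v) - \Theta_{m,a_{0}}(u',v')\bigr\|_{\mathcal{O}_{T}^{\oplus 2}(m)} \leq \left(\frac{CL}{m} + \frac{CLT}{m+1}\right)\bigl(\|u-u'\|_{\mathcal{O}_{T}(m)} + \|v-v'\|_{\mathcal{O}_{T}(m)}\bigr),
\end{equation*}
the $F_{3}$ contribution sitting inside the second summand because it lacks the $1/t$ factor. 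Choosing $m$ large enough that $CL/m < 1/4$ and then $T$ small enough makes $\Theta_{m,a_{0}}$ a strict contraction sending $\overline{B}_{R}(0)$ into itself.

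The Banach fixed point theorem then produces a unique $(u,v) \in \overline{B}_{R}(0)$ with $\Theta_{m,a_{0}}(u,v) = (u,v)$. Setting $(a,b) \defeq (p_{m}^{a}+u,\, p_{m}^{b}+v)$ and differentiating the integral equation recovers the ODEs (\ref{InvSpin(7)InstantonEqSO(3)}) on $(0,T]$. The initial conditions (\ref{CondAZero})--(\ref{CondBZero}) follow from $p_{m}^{a}(0,a_{0}) = 0$, $\dot{p}_{m}^{a}(0,a_{0}) = a_{0}$, $p_{m}^{b}(0,a_{0}) = 1$, together with $u(0) = \dot{u}(0) = 0$, the latter using $u \in \mathcal{O}_{T}(m)$ for $m \geq 2$. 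For $C^{\infty}$ regularity I bootstrap: since $u, v \in \mathcal{O}_{T}(m)$, the right-hand side of the ODE is continuous up to $t=0$ (the $1/t$ is killed by the vanishing of $a$ at the origin), so $(a,b) \in C^{1}[0,T]$; writing $a = t\tilde{a}$ with $\tilde{a}$ smooth and iterating, or equivalently re-running the fixed-point argument at order $m' > m$ and invoking uniqueness to identify the solutions, yields smoothness to all orders.

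The main obstacle is the singular $1/t$ coefficient: without it, the construction would reduce to an easy application of Picard--Lindel\"of. The weighted space $\mathcal{O}_{T}(m)$ converts this singularity into a $1/m$ gain in the Lipschitz constant, but only after the problem has been preconditioned by subtracting off a sufficiently high-order formal Taylor polynomial, so that $E_{m}$ vanishes to order $m$ at $0$. Thus Proposition \ref{FormalSeries} and the weighted norm work in tandem: the former produces a polynomial that solves the ODE up to error $O(t^{m})$, and the latter exploits this to obtain a contraction. A secondary technicality is that the Lipschitz constant $L$ depends on the range of the perturbed solution, which forces the logical order of the choices to be $R \rightsquigarrow L \rightsquigarrow m \rightsquigarrow T$ rather than any other ordering.
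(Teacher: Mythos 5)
Your proposal is correct and follows essentially the same route as the paper: the same weighted Banach spaces $\mathcal{O}_{T}(m)$, the same splitting of $\Theta_{m,a_{0}}$ into the integrated error term (controlled by $T/(m+1)$) plus a Lipschitz increment (controlled by $L/m$), the same ordering of choices $R\rightsquigarrow L\rightsquigarrow m\rightsquigarrow T$, and the same smoothness argument via re-running the contraction at a higher truncation order $l>m$ and invoking uniqueness of the fixed point in the $R$-ball (which is also how the paper completes the bootstrap, after first using direct differentiation of the equations to get $C^{m-1}$ regularity).
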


\begin{proof}
In what follows, our notation suppresses dependence on $a_{0}$. Fix $R>0$. We will select $m$ and $T$ such that $\Theta_{m}$ is a contraction on $\overline{B}_{R}(0)\subset\mathcal{O}^{\oplus 2}_{T}(m)$.

$\newline$
Consider the domain:
\begin{equation}
D_{R}\defeq[0,1]\times\overline{B}_{2R}(0,1)\subset\mathbb{R}^{3}.
\end{equation}
Let $L>0$ be a Lipschitz constant in the $(u,v)$ variables for the restrictions of $F_{1},F_{2},F_{3}$ on $D_{R}$. L is controlled by $L^{\infty}$ bounds on the restrictions of the derivatives of the $F_{i}$ on $D_{R}$. Choose:
\begin{equation}\nonumber
m>\max\left\{2L,1\right\}.
\end{equation}
Pick $T$ such that:
\begin{equation}\nonumber
    T<\min\left\{1,\frac{(m+1)R}{\norm{E^{a}_{m}}_{\mathcal{O}_{1}(m)}+\norm{E^{b}_{m}}_{\mathcal{O}_{1}(m)}},\frac{R}{\norm{p^{a}_{m}}_{\mathcal{O}_{1}(m)}},\frac{R}{\norm{p^{b}_{m}-1}_{\mathcal{O}_{1}(m)}}\right\}.
\end{equation}
Clearly, for $t\in [0,T]$ we have:
\begin{equation}\label{smallpab}
|p^{a}_{m}(t)|\le R,\;\;\;|p^{b}_{m}(t)-1|\le R.
\end{equation}
Claim that we also have:
\begin{equation}\label{ThetaSmallAt0}
\norm{\Theta_{m}(0,0)}_{\mathcal{O}_{T}(m)}\le \frac{R}{2}.
\end{equation}
To see this, we estimate as follows:
\begin{align}
    \norm{\Theta_{m}(0,0)}_{\mathcal{O}_{T}(m)}&=\norm{\Theta^{1}_{m}(0,0)}_{\mathcal{O}_{T}(m)}+\norm{\Theta^{2}_{m}(0,0)}_{\mathcal{O}_{T}(m)}\nonumber\\
    &=\norm{\int_{0}^{r}E^{a}_{m}(t)dt}_{\mathcal{O}_{T}(m)}+\norm{\int_{0}^{r}E^{b}_{m}(t)dt}_{\mathcal{O}_{T}(m)}\nonumber\\
    &\le\sup_{r\in[0,T]}\frac{1}{r^{m}}\int_{0}^{r}|E^{a}_{m}(t)|dt+\sup_{r\in[0,T]}\frac{1}{r^{m}}\int_{0}^{r}|E^{b}_{m}(t)|dt\nonumber\\
     &\le\sup_{r\in[0,T]}\frac{\norm{E^{a}_{m}}_{\mathcal{O}_{T}(m)}}{r^{m}}\int_{0}^{r}t^{m}dt+\sup_{r\in[0,T]}\frac{\norm{E^{b}_{m}}_{\mathcal{O}_{T}(m)}}{r^{m}}\int_{0}^{r}t^{m}dt\nonumber\\
     &\le\sup_{r\in[0,T]}\frac{\norm{E^{a}_{m}}_{\mathcal{O}_{T}(m)}}{m+1}r+\sup_{r\in[0,T]}\frac{\norm{E^{b}_{m}}_{\mathcal{O}_{T}(m)}}{m+1}r\nonumber\\
     &\le\frac{\norm{E^{a}_{m}}_{\mathcal{O}_{T}(m)}+\norm{E^{b}_{m}}_{\mathcal{O}_{T}(m)}}{(m+1)}T\le \frac{R}{2}.
\end{align}
We now prove contraction estimates for $\Theta^{1}_{m}$ and $\Theta^{2}_{m}$. Fix $0\le r\le T$ and compute:
\begin{align}
    \left|\Theta^{1}_{m}(u,v)(r)-\Theta^{1}_{m}(\widetilde{u},\widetilde{v})(r)\right|&\le\int_{0}^{r}\frac{1}{t}\left|F_{1}\left(t,p^{a}_{m}+u,p^{b}_{m}+v\right)-F_{1}\left(t,p^{a}_{m}+\widetilde{u},p^{b}_{m}+\widetilde{v}\right)\right|dt\nonumber\\
    &\le\int_{0}^{r}\frac{L}{t}\left(|u-\widetilde{u}|+|v-\widetilde{v}|\right)dt\nonumber\\
    &\le L\left(\norm{u-\widetilde{u}}_{\mathcal{O}_{T}(m)}+\norm{v-\widetilde{v}}_{\mathcal{O}_{T}(m)}\right)\int_{0}^{r}t^{m-1}dt\nonumber\\
    &\le\frac{Lr^{m}}{m}\left(\norm{u-\widetilde{u}}_{\mathcal{O}_{T}(m)}+\norm{v-\widetilde{v}}_{\mathcal{O}_{T}(m)}\right).\nonumber
\end{align}
In this calculation, the $L$-Lipschitz estimate is valid due to (\ref{smallpab}) and the fact that the uniform norm is controlled by the $\mathcal{O}_{T}(m)$ norm when $0<T<1$. We conclude that:
\begin{align}\label{Theta1Contract}
\norm{\Theta^{1}_{m}(u,v)-\Theta^{1}_{m}(\widetilde{u},\widetilde{v})}_{\mathcal{O}_{T}(m)}&\le\frac{L}{m}\left(\norm{u-\widetilde{u}}_{\mathcal{O}_{T}(m)}+\norm{v-\widetilde{v}}_{\mathcal{O}_{T}(m)}\right).
\end{align}
A similar calculation yields:
\begin{align}\label{Theta2Contract}
\norm{\Theta^{1}_{m}(u,v)-\Theta^{1}_{m}(\widetilde{u},\widetilde{v})}_{\mathcal{O}_{T}(m)}&\le\left(\frac{L}{m}+\frac{LT}{m+1}\right)\norm{v-\widetilde{v}}_{\mathcal{O}_{T}(m)}+\frac{LT}{m+1}\norm{u-\widetilde{u}}_{\mathcal{O}_{T}(m)}\nonumber\\
&\le\frac{1}{2}\left(\norm{u-\widetilde{u}}_{\mathcal{O}_{T}(m)}+\norm{v-\widetilde{v}}_{\mathcal{O}_{T}(m)}\right).
\end{align}
Due to (\ref{ThetaSmallAt0}), (\ref{Theta1Contract}) and (\ref{Theta2Contract}), the closed $R$-ball in $\mathcal{O}^{\oplus 2}_{T}(m)$ is stable under $\Theta$. The contraction mapping theorem (CMT) yields a unique fixed point $(u,v)$ in this ball.

$\newline$
This fixed point is necessarily of class $C^{1}[0,T]$ (by the fundamental theorem of calculus). Consequently $(a,b)$ is $C^{1}$ and it therefore constitutes an honest solution of (\ref{InvSpin(7)InstantonEqSO(3)}) on $[0,T]$. Considering the order of vanishing of $u$ at $0$ and looking at the equations, we observe that $\dt{u}(t)=O(t^{m-1})$. Conditions (\ref{CondAZero}), (\ref{CondDerAZero}), (\ref{CondBZero}) follow.

$\newline$
Full regularity follows by a simple bootstrap procedure. Since flows of smooth (non-autonomous) vector fields are smooth, $(u,v)$ is smooth in $(0,T]$. The task is to establish smoothness at $0$. Smoothness in $(0,T]$ legitimizes differentiation of the equations for $t>0$. This gives an expression for the second derivatives of $u$ and $v$ involving terms in $\frac{u}{t^{2}}$, $\frac{v}{t^{2}}$, $\frac{\dt{u}}{t}$ and $\frac{\dt{v}}{t}$. It is thus clear that $u^{(2)}(t),\;v^{(2)}(t)\to 0$ as $t\to 0$. Hence $u,v$ are of class $C^{2}[0,T]$ with vanishing second derivative at $0$. We can iterate this argument to conclude that $u,v$ are of class $C^{m-1}[0,T]$ with vanishing derivatives at $0$ up to order $m-1$. The only constraint on $m$ required for the contraction argument to run is $m>\max\left\{2L,1\right\}$. It follows that the operator $\Theta_{l}$ is a contraction for arbitrarily large $l>m$ (perhaps for shorter time $T$). Fixing $l>m$, we let $(u_{l},v_{l})$ be the associated fixed point. Repeating the argument above, it lies in $C^{l-1}[0,T]$ with vanishing derivatives up to order $l-1$. It is thus $O(t^{m+1})$. It follows that $(u_{l}+p_{l}^{a}-p_{m}^{a},v_{l}+p_{l}^{b}-p_{m}^{b})$ is also $O(t^{m+1})$. Consequently -by further decreasing $T$ as necessary- we can arrange that the latter has as small $\mathcal{O}^{\oplus 2}_{T}(m)$ norm as we like. In particular, we take this to be less than $R$. Furthermore, $(u_{l}+p_{l}^{a}-p_{m}^{a},v_{l}+p_{l}^{b}-p_{m}^{b})$ is a fixed point of $\Theta_{m}$. But $\Theta_{m}$ has a unique fixed point in the closed $R$-ball. It follows that:
\begin{equation}
    (u,v)=(u_{l}+p_{l}^{a}-p_{m}^{a},v_{l}+p_{l}^{b}-p_{m}^{b})
\end{equation}
and hence that $u,v$ lie in $C^{l-1}$. Since $l$ was arbitrary, the proof is complete.
\end{proof}
We now have enough for the first part of theorem \ref{ShortTime}. The preceding proposition guarantees the existence of a smooth solution $(a,b)$ satisfying (\ref{CondAZero}), (\ref{CondDerAZero}), (\ref{CondBZero}). The algebraic calculation in the start of this section uniquely specifies its full formal Taylor series at $t=0$ so that it passes the extension criterion in proposition \ref{PPi1ExtendCondition}. Finally, suppose that there is another smooth solution $(\widetilde{a},\widetilde{b})$ satisfying (\ref{CondAZero}), (\ref{CondDerAZero}), (\ref{CondBZero}). Arguing as above, we find that the two solutions share the same formal Taylor series at $0$ (the series discovered in proposition \ref{FormalSeries}). Let $m$ be as in proposition \ref{FixedPoint}. We have that $(a-p^{a}_{m},b-p^{b}_{m})$, $(\widetilde{a}-p^{a}_{m},\widetilde{b}-p^{b}_{m})$ are $O(t^{m+1})$. For short enough time $T$, the $\mathcal{O}^{\oplus 2}_{T}(m)$ norms of these functions are less than $R$. Since both functions are fixed points of $\Theta_{m}$ and lie in the closed $R$-ball, they are equal. Hence $(a,b)=(\widetilde{a},\widetilde{b})$.

$\newline$
It remains to study the dependence of solutions on variations of the initial data $a_{0}$. We immediately obtain:
\begin{proposition}
Fix $K>0$. We have:
\begin{equation}\nonumber
T_{K}=\inf\left\{t_{\text{max}}(a_{0})\;|\;a_{0}\in[-K,K]\right\}>0.
\end{equation}
\end{proposition}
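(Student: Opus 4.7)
The plan is to re-examine the contraction argument underlying Proposition \ref{FixedPoint} and verify that the existence time $T$ produced there can be chosen uniformly for $a_0$ ranging over the compact set $[-K,K]$. The key observation is that the ingredients of that argument cleanly separate into two classes: parameters determined by the coefficient functions $\mathcal{P}, \mathcal{Q}$ and the ball radius $R$, which do not see $a_0$ at all, and norms of the Taylor polynomials $p^a_m, p^b_m$ and error terms $E^a_m, E^b_m$, which depend on $a_0$ but continuously so by Corollary \ref{CtyIntoOm}.

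Concretely, I would first fix any $R > 0$ (say $R = 1$) and the Lipschitz constant $L$ of $F_1, F_2, F_3$ on the domain $D_R = [0,1] \times \overline{B}_{2R}(0,1)$, noting that $L$ is independent of $a_0$. This allows the integer $m > \max\{2L, 1\}$ demanded by Proposition \ref{FixedPoint} to be chosen once and for all, independently of the initial condition — a crucial point, since the fixed-point operator $\Theta_{m, a_0}$ must be studied on a single Banach space $\mathcal{O}_T^{\oplus 2}(m)$ with a uniform contraction constant. The constraint on $T$ coming from the proof of Proposition \ref{FixedPoint} then becomes an inequality involving the four quantities
\begin{equation*}
\norm{E^a_m(\cdot, a_0)}_{\mathcal{O}_1(m)}, \quad \norm{E^b_m(\cdot, a_0)}_{\mathcal{O}_1(m)}, \quad \norm{p^a_m(\cdot, a_0)}_{\mathcal{O}_1(1)}, \quad \norm{p^b_m(\cdot, a_0) - 1}_{\mathcal{O}_1(1)},
\end{equation*}
viewed as functions of $a_0$.

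By Corollary \ref{CtyIntoOm}, these four maps are continuous on $\mathbb{R}$ and therefore attain finite maxima on the compact set $[-K,K]$. Substituting those uniform upper bounds into the constraint yields a single $T_0 > 0$ for which the contraction argument runs simultaneously for every $a_0 \in [-K,K]$; Proposition \ref{FixedPoint} then produces a smooth solution on $[0, T_0]$ for each such $a_0$, so that $t_{\max}(a_0) \ge T_0$. Taking the infimum over $a_0 \in [-K,K]$ gives $T_K \ge T_0 > 0$, which is the desired conclusion. The only conceptual point that deserves real care is the $a_0$-independence of $m$: were $m$ forced to grow with $a_0$, the Banach space underlying the fixed-point problem would vary with the initial data and the argument would collapse. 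Once that is pinned down, the proof reduces to a routine continuity-plus-compactness argument.
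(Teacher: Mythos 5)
Your proposal is correct and follows essentially the same route as the paper: fix $R$, $L$, and $m$ independently of $a_{0}$, then use Corollary \ref{CtyIntoOm} to bound the relevant $\mathcal{O}_{1}(m)$ and $\mathcal{O}_{1}(1)$ norms uniformly over the compact set $[-K,K]$, which yields a uniform $T$ for the contraction argument. Your explicit emphasis on the $a_{0}$-independence of $m$ is a worthwhile clarification of a point the paper leaves implicit, but it is not a different argument.
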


\begin{proof}
In our existence proof, once, $R,L,m$ are fixed, $T$ needs to be controlled from above by quantities decreasing with the $\mathcal{O}_{1}(m)$ norms of the error functions and the $\mathcal{O}_{1}(1)$ norms of $p^{a}_{m}, \;p^{b}_{m}-1$. By corollary \ref{CtyIntoOm}, these norms depend continuously on $a_{0}$ and are hence bounded for $a_{0}$ in a compact set. It follows that we can choose $T$ small enough so that the contraction argument works for all $a_{0}\in [-K,K]$.
\end{proof}
Note that the contraction constant can be taken to be the same across all $a_{0}\in [-K,K]$. This is vital for the continuity proof, which we now discuss.
\begin{proposition}
The mapping defined by:
\begin{align}
[-K,K]&\to C^{0}\left(\;[0,T_{K}],\mathbb{R}^{2}\right)\nonumber\\
a_{0}&\mapsto \left(a,b\right)_{a_{0}}
\end{align}
is continuous.
\end{proposition}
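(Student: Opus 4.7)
My plan is to exploit the uniformity of the contraction argument across $a_{0}\in[-K,K]$ together with the continuity provided by Proposition \ref{CoeffCts} and Corollary \ref{CtyIntoOm}. From the proof of Proposition \ref{FixedPoint}, the requisite Lipschitz constants and data norms depend continuously on $a_{0}$, so an inspection of estimates (\ref{Theta1Contract}) and (\ref{Theta2Contract}) yields a common $m$, a common $T_{K}$, and a common contraction constant $\kappa<1$ valid for all $a_{0}\in[-K,K]$. Writing $w_{a_{0}}\defeq(u_{a_{0}},v_{a_{0}})$ for the fixed point of $\Theta_{m,a_{0}}$ in $\overline{B}_{R}(0)\subset\mathcal{O}_{T_{K}}^{\oplus 2}(m)$, the standard contraction-perturbation bound
\begin{equation*}
(1-\kappa)\,\|w_{a_{0}}-w_{\tilde a_{0}}\|_{\mathcal{O}_{T_{K}}^{\oplus 2}(m)}\le\|\Theta_{m,a_{0}}(w_{\tilde a_{0}})-\Theta_{m,\tilde a_{0}}(w_{\tilde a_{0}})\|_{\mathcal{O}_{T_{K}}^{\oplus 2}(m)}
\end{equation*}
reduces the theorem to showing that $a_{0}\mapsto\Theta_{m,a_{0}}(w)$ is continuous in $\mathcal{O}_{T_{K}}^{\oplus 2}(m)$, uniformly for $w$ in the closed $R$-ball.

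For this, split each component $\Theta^{i}_{m,a_{0}}(w)$ into an \emph{error contribution} of the form $-\int_{0}^{s}E^{a,b}_{m}(t,a_{0})\,dt$ and an \emph{increment contribution} involving $F_{i}(t,p^{a}_{m}+u,p^{b}_{m}+v)-F_{i}(t,p^{a}_{m},p^{b}_{m})$. The error contribution is immediate: integration defines a bounded map $\mathcal{O}_{T_{K}}(m)\to\mathcal{O}_{T_{K}}(m)$ with norm at most $T_{K}/(m+1)$, so the continuity of $a_{0}\mapsto E^{a,b}_{m}(\cdot,a_{0})$ in $\mathcal{O}_{T_{K}}(m)$ from Corollary \ref{CtyIntoOm} transfers directly. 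For the increment, expand $F_{1},F_{2},F_{3}$, which are at most quadratic in $(u,v)$ with coefficients smooth in $t$, and write the $a_{0}$-to-$\tilde a_{0}$ difference as a sum of terms linear in $p^{a,b}_{m}(\cdot,a_{0})-p^{a,b}_{m}(\cdot,\tilde a_{0})$ with factors among $u,v,\mathcal{P},\mathcal{Q}$ and further copies of $p^{a,b}_{m}$. The crucial structural observation is that $p^{a}_{m}$ and $p^{b}_{m}-1$ both vanish at $t=0$; hence so do their differences in $a_{0}$, and $\bigl(p^{a,b}_{m}(\cdot,a_{0})-p^{a,b}_{m}(\cdot,\tilde a_{0})\bigr)/t$ lies in $C^{0}[0,T_{K}]$ with sup norm controlled by the $\mathcal{O}_{T_{K}}(1)$-norm of the difference, which is continuous in $a_{0}$ by Corollary \ref{CtyIntoOm}. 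This absorbs the singular factor $1/t$; combined with the $\mathcal{O}_{T_{K}}(m)$-smallness of $u,v$, the integrand lies in $\mathcal{O}_{T_{K}}(m)$, and integration yields the required uniform continuity estimate.

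Once $a_{0}\mapsto w_{a_{0}}$ is continuous into $\mathcal{O}^{\oplus 2}_{T_{K}}(m)$, the continuous embedding $\mathcal{O}_{T_{K}}(m)\hookrightarrow C^{0}[0,T_{K}]$ (with constant $T_{K}^{m}$) upgrades this to continuity into $C^{0}$. Adding the Taylor polynomials $(p^{a}_{m}(\cdot,a_{0}),p^{b}_{m}(\cdot,a_{0}))$, whose coefficients are polynomial in $a_{0}$ by Proposition \ref{CoeffCts} and hence depend continuously on $a_{0}$ in $C^{0}[0,T_{K}]$, delivers the claimed continuity of $a_{0}\mapsto(a,b)_{a_{0}}$ in $C^{0}\bigl([0,T_{K}],\mathbb{R}^{2}\bigr)$. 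The main obstacle is the bookkeeping for the increment in the middle paragraph: the singular $1/t$ factor must be absorbed by the vanishing at $t=0$ of the difference of Taylor polynomials, which relies on the parity and initial-value constraints encoded in Proposition \ref{FormalSeries} rather than on the generic $\mathcal{O}_{T_{K}}(m)$ machinery.
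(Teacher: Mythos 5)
Your argument is correct, and it reaches the conclusion by a genuinely different (though closely related) mechanism than the paper. The paper follows the Smoller--Wasserman--Yau--McLeod iteration scheme: it packages the family $\Theta_{m,a_{0}}$ into a fiber-preserving continuous self-map $S$ of the trivial bundle $[-K,K]\times\mathcal{O}^{\oplus 2}_{T_{K}}(m)$, starts the CMT iteration on every fiber from the \emph{fixed} element $s(x)$, runs it a large but fixed number $N$ of steps, and combines the geometric convergence-rate estimate $\frac{2R}{1-C}C^{N}$ with the continuity of $S^{N}$ to conclude via the triangle inequality. You instead use the one-step fixed-point perturbation inequality $(1-\kappa)\norm{w_{a_{0}}-w_{\tilde a_{0}}}\le\norm{\Theta_{m,a_{0}}(w_{\tilde a_{0}})-\Theta_{m,\tilde a_{0}}(w_{\tilde a_{0}})}$. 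Both routes rest on exactly the same two inputs: a contraction constant uniform over $[-K,K]$ (which the paper explicitly records after Proposition \ref{FixedPoint}) and continuity of $a_{0}\mapsto\Theta_{m,a_{0}}(w)$. The trade-off is that your route delivers a quantitative modulus of continuity directly, but obliges you to actually verify the parameter-continuity of the operator --- which you do carefully, splitting off the error term $-\int_{0}^{s}E^{a,b}_{m}$ (handled by Corollary \ref{CtyIntoOm}) and absorbing the $1/t$ singularity in the increment term using the fact that $p^{a}_{m}$ and $p^{b}_{m}-1$ vanish at $t=0$ for every $a_{0}$, so their $a_{0}$-differences lie in $\mathcal{O}_{T_{K}}(1)$. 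The paper's proof asserts the continuity of $S$ without this bookkeeping, so in that respect your write-up supplies a detail the paper leaves implicit. One minor simplification available to you: the perturbation bound only requires continuity of $a_{0}\mapsto\Theta_{m,a_{0}}(w)$ at the single element $w=w_{\tilde a_{0}}$, so the uniformity over the full $R$-ball, while true, is not strictly needed.
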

\begin{proof}
Consider the trivial (infinite-rank) vector bundle over [-K,K]:
\begin{equation}\nonumber
    E\defeq[-K,K]\times\mathcal{O}^{\oplus 2}_{T_{K}}(m).
\end{equation}
The following map is fiber-preserving and continuous:
\begin{align}
    S:E&\to E,\nonumber\\
    \left(a_{0},(u,v)\right)&\mapsto\left(a_{0},\Theta_{m,a_{0}}(u,v)\right).\nonumber
\end{align}
There is a unique section $s$ of $E$ that is fixed by $S$ (the one assigning to each choice of initial data the associated fixed point of $\Theta_{a_{0},m}$). The task is to prove that $s$ is continuous. To this end, we fix $x\in [-K,K]$ and prove that $s$ is continuous at $x$. Fix $\epsilon>0$ and define the following (continuous) section of $E$:
\begin{equation}\nonumber
    u_{x}(a_{0})\defeq (a_{0},s(x)).
\end{equation}
We will run the CMT iteration on each fiber with initial condition determined by $u_{x}$. Letting $0<C<1$ be the contraction constant of $\Theta_{a_{0},m}$ and using the convergence rate estimate of the CMT we have:
\begin{align}
  \norm{\Theta^{N}_{m,a_{0}}(u_{x}(a_{0}))-s(a_{0})}_{\mathcal{O}_{T_{K}}(m)^{\oplus 2}}&\le\frac{\norm{\Theta_{m,a_{0}}(u_{x}(a_{0}))-u_{x}(a_{0})}_{\mathcal{O}_{T_{K}}^{\oplus 2}(m)}}{1-C}  C^{N}\nonumber\\
  &\le\frac{2R}{1-C}C^{N}.\label{CtyEst1}
\end{align}
Fix $N$ large enough so that this quantity is controlled by $\frac{\epsilon}{2}$. Since $S$ and $u$ are continuous, we have:
\begin{equation}\nonumber
\lim_{a_{0}\to x}S^{N}u_{x}(a_{0})=S^{N}u_{x}(x)=(x,s(x)).
\end{equation}
Consequently, for $a_{0}$ sufficiently close to $x$, we can achieve:
\begin{equation}\nonumber
\norm{\Theta^{N}_{m,a_{0}}\left(u_{x}(a_{0})\right)-s(x)}_{\mathcal{O}^{\oplus 2}_{T_{K}}(m)}<\frac{\epsilon}{2}.\label{CtyEst2}
\end{equation}
Using (\ref{CtyEst1}), (\ref{CtyEst2}) and the triangle inequality completes the proof.
\end{proof}
Uniqueness implies that the solution associated to $a_{0}=0$ corresponds to $A_{\text{HYM}_{\pi_{1}}}$. This instanton will play a central role in the analysis of the global properties of the system.
\subsubsubsection{Global Existence for Small Initial Data}
The previous section yields a characterization of short-time solutions near the pole. We are now tasked with understanding which of these solutions are global. In this section we establish that:
\begin{theorem}\label{GlobalExistence}
There exists an $\epsilon>0$ such that for $|a_{0}|<\epsilon$, the short time solutions of theorem \ref{ShortTime} are global.
\end{theorem}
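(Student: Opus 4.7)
The strategy is to bootstrap the short-time existence of Theorem \ref{ShortTime} to global existence by combining continuous dependence on initial data with an a priori boundedness argument anchored at the explicit HYM solution $A_{\text{HYM}_{\pi_1}}$, which corresponds to $a_0=0$ and has $a\equiv 0$, $b_{\text{HYM}}(t)=1/\sqrt{t^2+1}$. This reference solution is global and bounded, and the proposition asserts that global existence is an open condition in a neighbourhood of it.

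First I would extend the continuous dependence of Theorem \ref{ShortTime} from $[0,T_K]$ to arbitrary compact intervals $[0,T]$. The system (\ref{InvSpin(7)InstantonEqSO(3)}) has smooth coefficients for $t>0$, so classical continuous dependence for regular ODEs, initiated at $t=T_K/2$ with initial conditions that depend continuously on $a_0$ by Theorem \ref{ShortTime}, combines with Theorem \ref{ShortTime} itself on $[0,T_K]$ to yield: for any fixed $T>0$ there exists $\delta(T)>0$ such that $|a_0|<\delta(T)$ forces $(a_{a_0},b_{a_0})$ to exist on $[0,T]$ and remain as close to $A_{\text{HYM}_{\pi_1}}$ as desired. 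By Proposition \ref{SystemSymmetry} we may assume $a_0>0$, and by Proposition \ref{aNotZero} the sign of $a$ is preserved throughout its lifespan.

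The upgrade to global existence rests on a uniform a priori bound $|a(t)|+|b(t)|\le C$ on the maximal existence interval, valid for all $|a_0|<\epsilon$; combined with smoothness of the coefficients for $t>0$, this forces $t_{\max}=\infty$ by the standard continuation principle. My plan for this bound is to linearize (\ref{InvSpin(7)InstantonEqSO(3)}) about the HYM solution and control the variational equation on $[T^*,\infty)$ for a large intermediate $T^*$ supplied by the previous paragraph. The linearized equation for $\delta a = a$ takes the form $\frac{d}{dt}(\delta a)=\frac{\mathcal{P}}{t}\bigl(b_{\text{HYM}}-\tfrac{1}{\mathcal{P}}\bigr)\delta a$, whose coefficient behaves like $-1/t$ as $t\to\infty$ (because $b_{\text{HYM}}\to 0$ and $\mathcal{P}\to\sqrt{6}$), producing $\delta a\sim 1/t$ decay. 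A parallel analysis gives analogous control on $\delta b=b-b_{\text{HYM}}$.

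The principal obstacle is rigorously converting this linear asymptotic stability into nonlinear a priori bounds sharp enough to close a Gr\"onwall-type argument against the quadratic nonlinearities in (\ref{InvSpin(7)InstantonEqSO(3)}). An alternative, more geometric route would invoke Proposition \ref{NoCross} with explicit barriers: either the $a\equiv 0$ family of Section \ref{Pi2Sols} (supplying a one-sided control once the strict-ordering hypothesis of \ref{NoCross} is suitably massaged) or perturbations of the HYM solution drawn from the already-constructed local family, chosen to straddle the trajectory under study at time $T^*$.
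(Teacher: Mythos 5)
Your proposal correctly identifies the anchor of the argument (the explicit HYM solution $a\equiv 0$, $b_{\text{HYM}}=1/\sqrt{t^{2}+1}$ at $a_{0}=0$, together with continuous dependence from Theorem \ref{ShortTime} upgraded to arbitrary compact intervals), and that much coincides with what the paper does. But the decisive step --- converting ``close to the HYM solution on a fixed compact interval $[0,T]$'' into ``global'' --- is exactly the part you leave open, and neither of your two suggested routes closes it. The linearization-plus-Gr\"onwall route is not carried out, and you acknowledge as much; it is genuinely delicate here because the system has an actual blowup threshold (Theorem \ref{Blowup}), the linear decay rates are only $O(1/t)$, and the quadratic coupling $+\tfrac{\mathcal{P}\mathcal{Q}}{2}a^{2}$ in the $b$-equation feeds back into the destabilizing term $\tfrac{\mathcal{P}a}{t}\,b$ of the $a$-equation, so a naive Gr\"onwall estimate does not obviously close. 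The alternative via Proposition \ref{NoCross} also fails as stated: that proposition only propagates \emph{lower} bounds forward in time, and comparison with the HYM solution gives $a_{a_{0}}>0$, $b_{a_{0}}>b_{\text{HYM}}$ --- the paper uses this inequality to prove \emph{blowup} for large data, not boundedness. To get an upper barrier from \ref{NoCross} you would need an already-known global solution lying above the one under study, which is circular.

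The idea you are missing is the qualitative trapping argument of the paper's Proposition \ref{TrappedRegion}. Critical points of $a$ occur exactly where $b=\mathcal{P}^{-1}$, and a computation of $\ddot{a}$ at such a point shows that in the spacetime region $t>\tfrac{\sqrt{6}}{2\sqrt{1-2a^{2}}}$ every critical point is a strict maximum. Once $a$ attains a maximum there, it can never turn around again (a subsequent minimum would require first exiting the region, which would require $a$ to increase), so $a$ decreases monotonically thereafter and is bounded; moreover $b<\mathcal{P}^{-1}$ from then on, and $b>0$ by Corollary \ref{bPos}, so $b$ is bounded too, and the continuation principle applies. The compact-interval continuity argument you describe is then used only to verify the hypothesis of this proposition: for $|a_{0}|$ small the solution tracks the HYM solution closely enough on $[0,4]$ that $b_{a_{0}}$ must cross $\mathcal{P}^{-1}$ after $t=\tfrac{3\sqrt{2}}{2}$ while $|a|<\tfrac{1}{2}$, placing the critical point inside the trapping region. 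Without this (or an honestly completed nonlinear stability estimate), your argument does not prove the theorem.
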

The heart of the argument lies in the following proposition. Its conditions are subsequently easily verified (for small initial data) by a continuity argument.
\begin{proposition}\label{TrappedRegion}
Suppose that $a_{0}>0$ and let $(a,b)_{a_{0}}$ be a solution to the system (\ref{InvSpin(7)InstantonEqSO(3)}) such that $a$ attains a critical point in the spacetime region:
\begin{equation}\label{SpacetimeRegion}
t>\frac{\sqrt{6}}{2\sqrt{1-2a^{2}}}.
\end{equation}
Then $t_{\text{max}}(a_{0})=+\infty$.
\end{proposition}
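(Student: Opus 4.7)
The plan is to show that the hypothesis forces the trajectory of $(a,b)$ to be trapped in a compact subset of the $(a,b)$-plane for all $t \geq t_1$, where $t_1$ denotes the critical time in question; standard ODE continuation then forbids finite-time blowup. First I would unravel the hypothesis: a critical point of $a$ with $a(t_1) > 0$ forces $b(t_1) = 1/\mathcal{P}(t_1)$ by the first equation in (\ref{InvSpin(7)InstantonEqSO(3)}), and the spacetime inequality rearranges (after squaring, permissible since $a^2 < 1/2$) to $a(t_1)^2 < (2t_1^2 - 3)/(4t_1^2)$.

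Introduce the comparison quantity $c(t) := b(t) - 1/\mathcal{P}(t)$, which vanishes at $t_1$. Substituting $b = 1/\mathcal{P}$ into the second equation in (\ref{InvSpin(7)InstantonEqSO(3)}) and subtracting $(1/\mathcal{P})'$ yields an expression for $\dot{c}(t_1)$. Using the explicit forms $\mathcal{P}^2 = 12(t^2+1)/(2t^2+3)$ and $\mathcal{Q} = t/(t^2+1)$, the numerator of $2t\mathcal{P}\cdot \dot{c}(t_1)$ reduces to a polynomial in $t$ and $a(t_1)^2$; direct substitution of the hypothesis $4t^2 a^2 < 2t^2 - 3$ renders this polynomial strictly negative, so $\dot{c}(t_1) < 0$ and $c$ dips below zero immediately after $t_1$.

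Next, I claim that $c < 0$ persists on $(t_1, t_{\max}(a_0))$. Argue by contradiction: if $t_2$ is the first subsequent zero of $c$, then necessarily $\dot{c}(t_2) \geq 0$. But on $(t_1, t_2)$, $b < 1/\mathcal{P}$ gives $\dot{a} < 0$ via the first equation of (\ref{InvSpin(7)InstantonEqSO(3)}), so $a$ is strictly decreasing there; in particular
\[
a(t_2)^2 \leq a(t_1)^2 < \frac{2t_1^2 - 3}{4t_1^2} \leq \frac{2t_2^2 - 3}{4t_2^2},
\]
the last inequality because $t \mapsto 1/2 - 3/(4t^2)$ is increasing. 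Thus the triple $(t_2, a(t_2), b(t_2))$ still satisfies the hypothesis of the proposition, and the first-step calculation reapplied at $t_2$ gives $\dot{c}(t_2) < 0$, a contradiction.

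Combining these facts: for all $t \in [t_1, t_{\max}(a_0))$ we have $0 < b(t) < 1/\mathcal{P}(t) \leq 1/\mathcal{P}(0) = 1/2$ (positivity of $b$ coming from corollary \ref{bPos}) and $0 < a(t) \leq a(t_1)$ (positivity of $a$ coming from proposition \ref{aNotZero}). The trajectory is therefore confined to the compact rectangle $[0, a(t_1)] \times [0, 1/2]$, so the right-hand side of (\ref{InvSpin(7)InstantonEqSO(3)}) is uniformly bounded on $[t_1, T] \times [0, a(t_1)] \times [0, 1/2]$ for every $T < \infty$; the standard continuation principle for smooth ODEs on $t > 0$ then precludes finite-time blowup, giving $t_{\max}(a_0) = +\infty$. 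The main technical obstacle is the algebraic verification of $\dot{c}(t_1) < 0$: it is a finite rational-function manipulation, but must be organised so that the hypothesis $4t^2 a^2 < 2t^2 - 3$ neutralises several positive contributions simultaneously to produce a definite sign — in particular both the $a^2$-terms in $\dot{b}$ and the compensating derivative of $1/\mathcal{P}$ must be tracked carefully.
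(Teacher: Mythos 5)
Your proposal is correct and follows essentially the same route as the paper: the paper computes $\frac{d^{2}a}{dt^{2}}$ at a critical point and obtains the factor $\left(4a^{2}-2\right)t^{2}+3$, which is exactly the quantity your hypothesis $4t^{2}a^{2}<2t^{2}-3$ makes negative (your $\dot{c}(t_{1})<0$ is equivalent to the paper's statement that any critical point in the region is a maximum, and the sign computation does check out). Your persistence argument via the first subsequent zero of $c$ and the monotonicity of $t\mapsto\frac{2t^{2}-3}{4t^{2}}$ is a slightly more explicit rendering of the paper's observation that a minimum could only occur after exiting the region, which would require $a$ to increase; the trapping-rectangle conclusion is likewise the paper's boundedness argument.
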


\begin{proof}
By proposition \ref{aNotZero}, $a(t)>0$ for all $0<t<t_{\text{max}}(a_{0})$. Looking at the ODE for $a$, we conclude that the critical points of $a$ are precisely the points where $b=\mathcal{P}^{-1}$. We seek an expression for the second derivative of $a$ at a critical point occurring at time $t=t_{\text{crit}}>0$. Differentiating the ODE for $a$ and setting $b=\mathcal{P}^{-1}$, we obtain:
\begin{equation}
    \frac{d^{2}a}{dt^{2}}_{|_{t=t_{\text{crit}}}}=\frac{3\; a\left(t_{\text{crit}}\right)}{2t_{\text{crit}}^{2}\left(2t_{\text{crit}}^{2}+3\right)}\left[\left(4\; a\left(t_{\text{crit}}\right)^{2}-2\right)t_{\text{crit}}^{2}+3\right].
\end{equation}
The first factor is strictly positive. Consequently, the nature of the critical point depends on the sign of:
\begin{equation}
F(t,a)\defeq\left(4a^{2}-2\right)t^{2}+3
\end{equation}
at $\left(t_{\text{crit}},a\left(t_{\text{crit}}\right)\right)$. For $(t,a)$ in the spacetime region \ref{SpacetimeRegion}, we have $F(t,a)<0$. Hence, any critical point occurring in the region is a maximum.

$\newline$
Suppose that a maximum does occur inside the region (\ref{SpacetimeRegion}). For a short amount of time thereafter $a$ is decreasing. The only way that $a$ can ever increase again is if it reaches a minimum. A minimum can only occur if $(t,a(t))$ exits the spacetime region (\ref{SpacetimeRegion}). For this to occur, $a$ has to increase. It follows that $a$ decreases for as long as the solution survives. Consequently $a$ is bounded from above. Since $a>0$, it follows that $a$ is also bounded from below. Since $a$ consistently decreases after the maximum point, we have that $b(t)<\mathcal{P}(t)^{-1}$ for $t>t_{\text{crit}}$. By corollary \ref{bPos}, $b>0$ for all time. Hence both $a$ and $b$ are bounded and thus survive for all time $t\ge 0$.
\end{proof}
Proposition \ref{TrappedRegion} applies provided that the initial data is small enough:
\begin{proposition}\label{CriticalAttained}
There exists $\epsilon>0$ such that if $0\le a_{0}<\epsilon$, then $a_{a_{0}}$ attains a critical point in the spacetime region (\ref{SpacetimeRegion}).
\end{proposition}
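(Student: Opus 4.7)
The plan is to exploit continuous dependence on the initial datum $a_{0}$ around the reference HYM solution, which corresponds to $a_{0}=0$ and takes the explicit form $(a,b)_{0}=\left(0,\frac{1}{\sqrt{t^{2}+1}}\right)$, and to detect a critical point of the perturbed $a$ by monitoring the sign of the factor $b-\mathcal{P}^{-1}$ appearing in the equation $\dt{a}=\frac{\mathcal{P}a}{t}\left(b-\mathcal{P}^{-1}\right)$.

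First, I would upgrade the continuous dependence statement of theorem \ref{ShortTime} from the (possibly small) initial window $[0,T_{K}]$ to an arbitrary compact interval $[0,T]$ on which the reference solution is defined. Theorem \ref{ShortTime} already supplies uniform convergence on $[0,T_{K}]$. Away from the singular pole at $t=0$ the system (\ref{InvSpin(7)InstantonEqSO(3)}) is a smooth non-autonomous ODE, so classical continuous dependence on initial data (a standard Gr\"onwall argument with starting time $T_{K}/2$) propagates this uniform convergence to all of $[0,T]$ and simultaneously forces $t_{\max}(a_{0})>T$ for $|a_{0}|$ sufficiently small. Since the reference solution is explicit and global, we may pick any $T>\sqrt{6}/2$ we like.

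Next, I would analyze the reference zero of $h(t)\defeq\frac{1}{\sqrt{t^{2}+1}}-\mathcal{P}(t)^{-1}$. Using the explicit form of $\mathcal{P}$, a direct computation yields $h(t)=0$ precisely at $t_{\star}\defeq 3/\sqrt{2}$, with $h>0$ on $[0,t_{\star})$, $h<0$ on $(t_{\star},\infty)$, and $h'(t_{\star})\neq 0$. The numerical inequality $t_{\star}=3/\sqrt{2}>\sqrt{6}/2$ is the reason the spacetime condition (\ref{SpacetimeRegion}) is reachable. Fix $\delta>0$ with $t_{\star}-\delta>\sqrt{6}/2$, set $t_{1}\defeq t_{\star}-\delta$, $t_{2}\defeq t_{\star}+\delta$, and let
\[
\eta\defeq\min\bigl(h(t_{1}),\,-h(t_{2})\bigr)>0.
\]
By the uniform convergence statement above, there exists $\epsilon_{1}>0$ such that for $0\le a_{0}<\epsilon_{1}$ we have $\sup_{t\in[t_{1},t_{2}]}\bigl|b(t)-1/\sqrt{t^{2}+1}\bigr|<\eta/2$, hence $b(t_{1})-\mathcal{P}(t_{1})^{-1}>0$ and $b(t_{2})-\mathcal{P}(t_{2})^{-1}<0$. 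Proposition \ref{aNotZero} together with $\dt{a}(0)=a_{0}>0$ guarantees $a(t)>0$ throughout the lifespan, so $\dt{a}$ inherits the sign of $b-\mathcal{P}^{-1}$, and the intermediate value theorem produces a critical time $t_{\text{crit}}\in(t_{1},t_{2})$.

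Finally, shrinking $\epsilon\le\epsilon_{1}$ further, uniform convergence forces $|a(t_{\text{crit}})|$ to be arbitrarily small; this makes $\sqrt{6}/(2\sqrt{1-2a(t_{\text{crit}})^{2}})$ arbitrarily close to $\sqrt{6}/2$, hence strictly less than $t_{1}<t_{\text{crit}}$, which is precisely the condition (\ref{SpacetimeRegion}). The main obstacle is really the first step: bootstrapping the continuous dependence across the singular pole of theorem \ref{ShortTime} into uniform control on a fixed interval reaching past $t_{\star}$ is essential, since theorem \ref{ShortTime} on its own only supplies a potentially tiny time window $T_{K}$ over which the continuity of $a_{0}\mapsto(a,b)_{a_{0}}$ has been established.
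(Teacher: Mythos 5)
Your proposal is correct and follows essentially the same strategy as the paper: perturb off the explicit HYM solution $(0,1/\sqrt{t^{2}+1})$, use continuity of the flow in $a_{0}$ (bootstrapped past the singular pole by the regular Cauchy--Lipschitz theory), and detect the critical point of $a$ as the crossing of $b$ with $\mathcal{P}^{-1}$ near $t_{\star}=3\sqrt{2}/2>\sqrt{6}/2$. The only (cosmetic) difference is that the paper obtains the lower bound on the crossing time via the comparison principle of Proposition \ref{NoCross} rather than a second continuity estimate at $t_{1}$.
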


\begin{proof}
The idea is to use a continuity argument and compare with the solution corresponding to $a_{0}=0$:
\begin{equation}
a_{\text{HYM}}(t)=0,\;\;\;b_{\text{HYM}}(t)=\frac{1}{\sqrt{t^{2}+1}}.\nonumber\\
\end{equation}
Note that $\mathcal{P}(0)=2$ and $b(0)=1$ (independently of the choice of $a_{0}$). Hence $b$ always starts above $\mathcal{P}^{-1}$. For $a_{0}=0$, the solution $b_{\text{HYM}}$ crosses $\mathcal{P}^{-1}$ at the time: $t=\frac{3\sqrt{2}}{2}$. For $a_{0}>0$, formulae (\ref{anear0}) and (\ref{bnear0}) show that -at least for a very short time- to the right of $t=0$ we have
\begin{equation}
\left(a,b\right)>\left(a_{\text{HYM}},b_{\text{HYM}}\right)
\end{equation}
where the inequality is understood componentwise. By proposition \ref{NoCross}, this inequality persists for as long as the solutions exist. Consequently, if $a_{0}>0$, $b_{a_{0}}$ can only cross $\mathcal{P}^{-1}$ strictly after $t=\frac{3\sqrt{2}}{2}$.

$\newline$
Consider only $|a_{0}|\le1$. By the second assertion of theorem \ref{ShortTime}, the maximal existence time of the resulting solutions is bounded below by a positive number $T_{1}$. Furthermore, these solutions depend continuously on $a_{0}$ (in the $C^{0}[0,T_{1}]$ norm). Composing with the local flow associated to taking initial conditions at $t=T_{1}$, we see that the maximal existence time is lower semicontinuous in $a_{0}$. Furthermore, we see that if a particular choice of $a_{0}$ yields a solution surviving past some time $t=T$, the mapping sending initial conditions to their associated solutions is continuous from an open neighbourhood of $a_{0}$ into $C^{0}[0,T]$.

$\newline$
Since $(a_{\text{HYM}},b_{\text{HYM}})$ (associated to $a_{0}=0$) is global, initial data close to $0$ lead to solutions that survive arbitrarily long. In particular, we can choose $\epsilon >0$ to be small enough so that solutions associated to $0<a_{0}<\epsilon$ survive past $t=4$. Furthermore -at the expense of taking $\epsilon$ to be even smaller- we can appeal to continuity to arrange that:
\begin{align}
\sup_{t\in[0,4]}\left|a_{a_{0}}(t)\right|&<\frac{1}{2},\label{Controlona}\\
\sup_{t\in[0,4]}\left|b_{a_{0}}(t)-\frac{1}{\sqrt{t^{2}+1}}\right|&<\frac{1}{2}\inf_{t\in [3,4]}\left|\frac{1}{\sqrt{t^{2}+1}}-\frac{1}{\mathcal{P}(t)}\right|.\label{Controlonb}
\end{align}
Condition (\ref{Controlona}) implies that for any $\sqrt{3}< t\le 4$ the point $(t,a(t))$ lies in the spacetime region (\ref{SpacetimeRegion}). Condition (\ref{Controlonb}) implies that for any $3\le t\le 4$ we have:
\begin{equation}\nonumber
b_{a_{0}}(t)<\frac{1}{\mathcal{P}(t)}.
\end{equation}
By the intermediate value theorem, there exists a $0<t_{\text{crit}}<3$ where $b_{a_{0}}$ crosses $\mathcal{P}^{-1}$. But we have seen that this time must be after $t=\frac{3\sqrt{2}}{2}$ and consequently after $t=\sqrt{3}$. Hence, the critical point at $t=t_{\text{crit}}$ occurs in the spacetime region (\ref{SpacetimeRegion}). 
\end{proof}
Theorem \ref{GlobalExistence} easily follows from the preceding two propositions and the symmetry of the system (\ref{InvSpin(7)InstantonEqSO(3)}) -as formulated in proposition \ref{SystemSymmetry}-.

\begin{proof}\textbf{[of Theorem \ref{GlobalExistence}]}
Proposition \ref{CriticalAttained} yields a threshold $\epsilon>0$ such that for any $0\le a_{0}<\epsilon$, the $a$ component of the associated solution attains a critical point in the region (\ref{SpacetimeRegion}). Proposition \ref{TrappedRegion} then implies that $(a,b)_{a_{0}}$ is global. Finally, proposition \ref{SystemSymmetry} proves that solutions associated to $-\epsilon<a_{0}\le 0$ are global too.
\end{proof}

\subsubsubsection{Finite Time Blowup for Large Initial Data}
We now wish to study the development of large initial data. We will obtain the following:
\begin{theorem}\label{Blowup}
Suppose that:
\begin{equation}\nonumber
    |a_{0}|>\frac{1}{2\arctanh\left(\frac{1}{2}\right)}.
\end{equation}
Then $(a,b)_{a_{0}}$ blows up in finite time at most equal to:
\begin{equation}\label{ApproxBlowupTime}
    t_{\text{blowup}}\left(a_{0}\right)\defeq \frac{3\sqrt{2}}{2}\;\frac{\left(1-\tanh^{2}\left(\frac{1}{2|a_{0}|}\right)\right)^{\frac{1}{2}}}{1-2\tanh\left(\frac{1}{2|a_{0}|}\right)}.
\end{equation}
Furthermore, the blowup set:
\begin{equation}
    \mathcal{S}_{\text{blowup}}\defeq\left\{a_{0}\in\mathbb{R}\;\text{s.t.}\;(a,b)_{a_{0}}\;\text{blows up in finite time}\right\}
\end{equation}
is of the form:
\begin{equation}
    \mathcal{S}_{\text{blowup}}=(-\infty,-x)\cup(x,\infty)
\end{equation}
for some $0<x<\frac{1}{2\arctanh\left(\frac{1}{2}\right)}$.
\end{theorem}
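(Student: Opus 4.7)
By Proposition \ref{SystemSymmetry}, it suffices to treat $a_{0}>0$; solutions with $a_{0}<0$ are obtained by reflection. Throughout the lifetime, $a>0$ by Proposition \ref{aNotZero} and $b>0$ by Corollary \ref{bPos}. The plan is to (i) extract from the system \eqref{InvSpin(7)InstantonEqSO(3)} a scalar Riccati-type comparison whose explicit solution yields the stated bound $t_{\text{blowup}}(a_{0})$, and then (ii) use monotonicity in initial data together with continuous dependence to deduce the structural description of $\mathcal{S}_{\text{blowup}}$.

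\medskip

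For (i), the mechanism driving blowup is the positive feedback in the system: once $a^{2}$ exceeds $1$, the term $-\tfrac{\mathcal{P}\mathcal{Q}}{2}(1-a^{2})$ in $\dot b$ becomes positive, pushing $b$ up; this in turn increases $\mathcal{P}b-1$, accelerating the growth of $a$ through its defining equation; and so on. To turn this into a quantitative finite-time blowup, I plan to introduce an auxiliary scalar $\Xi(t)=\Xi(a(t),b(t),t)$, continuous up to $t=0$ with $\Xi(0^{+})=1/(2a_{0})$, satisfying a differential inequality of the form $\dot{\Xi}\le -\alpha(t)+\beta(t)\,\Xi^{2}$ with coefficients explicit in $\mathcal{P},\mathcal{Q}$. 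Integrating in closed form should yield a $\tanh$/$\mathrm{sech}$ expression in which $\Xi$ vanishes at the claimed time $t_{\text{blowup}}(a_{0})$; the threshold $|a_{0}|>1/(2\arctanh(1/2))$ is precisely the admissibility condition that the comparison solution reach zero in finite time, and the universal constant $\tfrac{3\sqrt{2}}{2}$ is the time at which the HYM solution $b_{\mathrm{HYM}}(t)=1/\sqrt{t^{2}+1}$ crosses $1/\mathcal{P}(t)$, as already used in the proof of Proposition \ref{CriticalAttained}. I expect the identification of the correct scalar $\Xi$ and the verification of its Riccati inequality to be the principal technical obstacle: natural candidates are quantities built from the reciprocal of $\dot a/a$, which are automatically compatible with the initial value $\Xi(0^{+})=1/(2a_{0})$, or, equivalently, functions measuring the deviation of $\mathcal{P}b$ from its value along the HYM profile.

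\medskip

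For (ii), Proposition \ref{NoCross}, applied at a small positive time where the Taylor expansions \eqref{anear0}, \eqref{bnear0} already place two solutions in strict componentwise order, gives that for $0<a_{0,1}<a_{0,2}$ the solutions satisfy $(a,b)_{a_{0,1}}(t)<(a,b)_{a_{0,2}}(t)$ componentwise throughout their common lifetime. In particular $\mathcal{S}_{\text{blowup}}\cap(0,\infty)$ is upward-closed: if $a_{0,1}\in\mathcal{S}_{\text{blowup}}$ and $a_{0,2}>a_{0,1}$, then $a_{0,2}\in\mathcal{S}_{\text{blowup}}$. Combined with Theorem \ref{GlobalExistence}, the set is bounded away from $0$, so it takes the form $[x,\infty)$ or $(x,\infty)$ with $x>0$. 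Openness---hence the form $(x,\infty)$---follows from continuous dependence (Theorem \ref{ShortTime}): for any $a_{0}\in\mathcal{S}_{\text{blowup}}$, the function $a$ attains arbitrarily large values just before its blowup time; by continuous dependence, nearby initial data produce solutions with equally large $a$ at the same intermediate time; and applying the feedback mechanism of (i) with a translated starting time shows that such large-data solutions themselves blow up in finite time. The strict inequality $x<1/(2\arctanh(1/2))$ then follows by applying openness at any $a_{0}$ chosen slightly above that threshold, yielding blow-up data just below it. The negative axis is handled by Proposition \ref{SystemSymmetry}.
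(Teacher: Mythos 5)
Your part (ii) --- the structure of $\mathcal{S}_{\text{blowup}}$ --- is essentially the paper's argument: upward-closedness on $(0,\infty)$ via Proposition \ref{NoCross}, a positive lower bound on $x$ from Theorem \ref{GlobalExistence}, openness from continuous dependence (Theorem \ref{ShortTime}) combined with a blowup criterion at a later reference time, and reflection via Proposition \ref{SystemSymmetry}. Part (i), however, is where the actual content of the theorem lies, and there you have only a plan, not a proof: you write that identifying the correct scalar $\Xi$ and verifying its Riccati inequality is ``the principal technical obstacle,'' which is precisely the step that cannot be left open. Moreover, the candidate you suggest does not work: the reciprocal of $\dot{a}/a$ behaves like $t$ as $t\to 0^{+}$ (since $\dot{a}/a\sim 1/t$ there), not like $1/(2a_{0})$, and the heuristic feedback mechanism you describe (requiring $a^{2}>1$ to turn the sign of a term in $\dot{b}$) is not the mechanism the paper exploits.

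What actually closes the argument in the paper is a pair of explicit comparison estimates. First, Proposition \ref{BlowupBoundProp} establishes the invariant-region bound $b(t)>\tfrac{t}{2\sqrt{t^{2}+1}}\,a(t)$ for all $a_{0}\ge 0$ and all time, by checking that the vector field points into the region along its boundary. Substituting this into $\dot{a}=\tfrac{\mathcal{P}a}{t}(b-\tfrac{1}{\mathcal{P}})$ decouples the system and bounds $a$ from below by the solution of the scalar Riccati problem (\ref{RiccatiEquation}), which is solved in closed form; the $\arctanh$ in its solution is the source of the $\tanh$ in (\ref{ApproxBlowupTime}), and it yields both the threshold function $\mathcal{R}(t_{0})$ and the blowup-time function $\mathcal{T}(t_{0},x_{0})$ of Proposition \ref{BlowupCriterionGeneralTime}. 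Second, Proposition \ref{aLargeAtChosenReference} converts a condition on $a_{0}$ into one on $a(t_{0})$ at the reference time $t_{0}=\tfrac{3\sqrt{2}}{2}$: using $b>b_{\text{HYM}}$ (from Proposition \ref{NoCross}) one bounds $a$ below by an explicitly solvable linear ODE and obtains $a(t_{0})>\tfrac{2\sqrt{2}}{3}a_{0}$, whence $t_{0}\cdot\tfrac{2\sqrt{2}}{3}a_{0}=2a_{0}$ explains the $\tfrac{1}{2|a_{0}|}$ inside the $\tanh$ and the hypothesis $|a_{0}|>\tfrac{1}{2\arctanh(1/2)}$ is exactly what puts $a(t_{0})$ above $\mathcal{R}(t_{0})$. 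Without these two estimates (or substitutes for them) your proposal does not establish finite-time blowup, let alone the quantitative bound on the blowup time.
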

Our analysis relies on an apriori bound on $\frac{b}{a}$:
\begin{proposition}\label{BlowupBoundProp}
For any $a_{0}\ge 0$, the solution $(a,b)_{a_{0}}$ satisfies the following inequality for all $0\le t<t_{\text{max}}(a_{0})$:
\begin{equation}\label{BlowupBound}
    b(t)>\frac{t}{2\sqrt{t^{2}+1}}a(t).
\end{equation}
\end{proposition}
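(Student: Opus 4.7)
The plan is to introduce the comparison function $\phi(t) \defeq b(t) - \frac{t}{2\sqrt{t^{2}+1}}\,a(t)$ and prove $\phi(t)>0$ throughout the lifespan by a first-time barrier argument. First I would dispose of the trivial case $a_{0}=0$: by Proposition \ref{aNotZero} we have $a\equiv 0$, so $\phi=b>0$ by Corollary \ref{bPos}. Thus I restrict to $a_{0}>0$, where $a(t)>0$ for all $t<t_{\max}(a_{0})$ by Proposition \ref{aNotZero}. Since $\phi(0)=b(0)=1>0$ and $\phi$ is continuous, it is strictly positive on some initial interval; I suppose for contradiction that $\phi$ admits a smallest zero $t^{\star}>0$. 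At $t^{\star}$ one has $\phi(t^{\star})=0$ and, because $\phi\ge 0$ on $[0,t^{\star}]$, the inequality $\phi'(t^{\star})\le 0$.

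The core of the argument is a direct computation of $\phi'(t^{\star})$. Using the system (\ref{InvSpin(7)InstantonEqSO(3)}) together with $\frac{d}{dt}\!\left(\frac{t}{2\sqrt{t^{2}+1}}\right)=\frac{1}{2(t^{2}+1)^{3/2}}$, one obtains
\begin{align*}
\phi'(t^{\star}) &= \frac{\mathcal{P}}{2t^{\star}}(1-b^{2}) - \frac{\mathcal{P}\mathcal{Q}}{2}(1-a^{2}) - \mathcal{Q}\,b \\
& \quad - \frac{a}{2(t^{\star 2}+1)^{3/2}} - \frac{t^{\star}}{2\sqrt{t^{\star 2}+1}}\left(\frac{\mathcal{P}ab}{t^{\star}}-\frac{a}{t^{\star}}\right).
\end{align*}
Substituting the constraint $b=\frac{t^{\star}a}{2\sqrt{t^{\star 2}+1}}$ forces the terms proportional to $a/\sqrt{t^{\star 2}+1}$ to cancel, and the $\mathcal{Q}b$ term combines with $a/(t^{\star 2}+1)^{3/2}$ cleanly. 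Using $\mathcal{Q}(t)=t/(t^{2}+1)$, after bookkeeping the expression collapses to
\begin{equation*}
\phi'(t^{\star}) \;=\; \frac{\mathcal{P}(t^{\star})}{2t^{\star}(t^{\star 2}+1)} \;+\; \frac{\mathcal{P}(t^{\star})\,t^{\star}\,a(t^{\star})^{2}}{8(t^{\star 2}+1)} \;>\;0,
\end{equation*}
contradicting $\phi'(t^{\star})\le 0$.

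The main obstacle is not conceptual but algebraic: one must recognize that the coefficient $\frac{t}{2\sqrt{t^{2}+1}}$ is tuned precisely so that the $a/\sqrt{t^{2}+1}$ contributions from $\frac{d}{dt}$ of the comparison factor, from $-\mathcal{Q}b$, and from the $-a/t$ piece of $\dt{a}$ cancel, while the $\mathcal{P}a^{2}t/(t^{2}+1)$ pieces from $\mathcal{P}\mathcal{Q}a^{2}/2$ and $-\mathcal{P}tab/(2\sqrt{t^{2}+1}) \big|_{b=ta/2\sqrt{t^{2}+1}}$ combine to a manifestly positive coefficient. Once this cancellation is spotted, the proof reduces to a short computation, and the remaining subtlety — that at $t^{\star}=0$ nothing needs to be checked — is handled by the strict positivity of $\phi(0)$ and continuity.
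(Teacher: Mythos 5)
Your proposal is correct and follows essentially the same route as the paper: a barrier argument for $\phi(t)=b(t)-\tfrac{t}{2\sqrt{t^{2}+1}}a(t)$, checking $\phi(0)>0$ and showing $\phi'>0$ at any time where $\phi$ vanishes. Your closed-form value $\phi'(t^{\star})=\tfrac{\mathcal{P}}{2t^{\star}(t^{\star 2}+1)}+\tfrac{\mathcal{P}t^{\star}a^{2}}{8(t^{\star 2}+1)}$ agrees with the paper's expression $\sqrt{6}\,\tfrac{b^{2}t_{\star}^{2}+b^{2}+1}{t_{\star}\sqrt{2t_{\star}^{2}+2}\sqrt{2t_{\star}^{2}+3}}$ after substituting the constraint $b=\tfrac{t^{\star}a}{2\sqrt{t^{\star 2}+1}}$, so the computation checks out.
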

\begin{proof}
\ref{BlowupBound} is clearly satisfied at $t=0$. To show that it persists for as long as solutions survive, we let $t_{\star}$ be any time such that:
\begin{equation}\nonumber
b(t_{\star})=\frac{t_{\star}}{2\sqrt{t_{\star}^{2}+1}}a(t_{\star})
\end{equation}
and we compute:
\begin{equation}
\frac{d}{dt}_{|_{t=t_{\star}}}\left(b(t)-\frac{t}{2\sqrt{t^{2}+1}}a(t)\right)=\sqrt{6}\; \frac{b(t_{\star})^{2}\;t_{\star}^{2}+b(t_{\star})^{2}+1}{t_{\star}\sqrt{2t_{\star}^{2}+2}\sqrt{2t_{\star}^{2}+3}}>0.
\end{equation}
\end{proof}
Proposition \ref{BlowupBoundProp} allows us to estimate:
\begin{equation}\label{Comparison1BlowupSection}
    \dt{a}=\frac{\mathcal{P}a}{t}\left(b-\frac{1}{\mathcal{P}}\right)>\frac{\mathcal{P}a}{t}\left(\frac{ta}{2\sqrt{t^{2}+1}}-\frac{1}{\mathcal{P}}\right).
\end{equation}
Fix a reference time $t_{0}$. Estimate (\ref{Comparison1BlowupSection}) implies that -past $t_{0}$- $a$ is bounded below by the solution of the following I.V.P. of Riccati type:
\begin{equation}\label{RiccatiEquation}
    \begin{cases}
      & \dt{u}(t)=u(t)\left(\frac{\sqrt{3}}{\sqrt{2t^{2}+3}}u(t)-\frac{1}{t}\right),\\
      & u(t_{0})=a(t_{0}).\\
    \end{cases}
\end{equation}
Setting $x_{0}\defeq a(t_{0})$, equation (\ref{RiccatiEquation}) can be solved explicitly to give:
\begin{equation}\label{RiccatiSolution}
    u_{t_{0},x_{0}}(t)=\frac{t_{0}x_{0}}{t\left(t_{0}x_{0}\arctanh\left(\frac{\sqrt{3}}{\sqrt{2t^{2}+3}}\right)-t_{0}x_{0}\arctanh\left(\frac{\sqrt{3}}{\sqrt{2t_{0}^{2}+3}}\right)-1\right)}.
\end{equation}
The task is now to determine conditions on $t_{0},x_{0}$ such that the function $u_{t_{0},x_{0}}$ blows up in finite time. An elementary calculation demonstrates that the denominator of (\ref{RiccatiSolution}) vanishes at time:
\begin{equation}\label{ApproxBlowupTimeGeneralt}
\mathcal{T}(t_{0},x_{0})\defeq\frac{\sqrt{6}}{2}\left(1-\frac{3}{2t_{0}^{2}+3}\right)^{\frac{1}{2}}\frac{\left(1-\tanh^{2}\left(\frac{1}{t_{0}x_{0}}\right)\right)^{\frac{1}{2}}}{\frac{\sqrt{3}}{\sqrt{2t_{0}^{2}+3}}-\tanh\left(\frac{1}{t_{0}x_{0}}\right)}.
\end{equation}
We introduce the function:
\begin{equation}\label{ThresholdFunction}
\mathcal{R}\left(t\right)\defeq\frac{1}{t\arctanh\left(\frac{\sqrt{3}}{\sqrt{2t^{2}+3}}\right)}.
\end{equation}
\begin{proposition}\label{ThresholdProp}
Fix $t_{0}>0$. We have:
\begin{equation}\nonumber
    \mathcal{T}\left(t_{0},\mathcal{R}(t_{0})\right)=+\infty
\end{equation}
and $\mathcal{T}(t_{0},x)$ decreases monotonically to $t_{0}$ (as a function of $x$) for $x>\mathcal{R}(t_{0})$. In particular, we have the following pointwise limit:
\begin{equation}\nonumber
\lim_{x\to\infty}\mathcal{T}(t_{0},x)=t_{0}.
\end{equation}
\end{proposition}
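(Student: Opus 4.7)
I would start by reparametrizing via $y \defeq \frac{1}{t_{0} x}$ and simplifying \eqref{ApproxBlowupTimeGeneralt}. Setting $\alpha \defeq \frac{\sqrt{3}}{\sqrt{2t_{0}^{2}+3}}$ (the hyperbolic tangent appearing in the denominator) and $C(t_{0}) \defeq \frac{\sqrt{6}}{2}\left(1 - \frac{3}{2t_{0}^{2}+3}\right)^{1/2}$, the identity $1-\tanh^{2} y = \sech^{2} y$ collapses the formula to
$$\mathcal{T}(t_{0}, x) = \frac{C(t_{0})}{\alpha \cosh y - \sinh y}.$$
All three assertions then reduce to elementary analysis of $D(y) \defeq \alpha \cosh y - \sinh y$ on the interval $[0, \arctanh \alpha]$, which corresponds bijectively and decreasingly to $x \in [\mathcal{R}(t_{0}), \infty)$.

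For the divergence at $x = \mathcal{R}(t_{0})$: by the definition \eqref{ThresholdFunction} of $\mathcal{R}$, this value of $x$ corresponds to $y = \arctanh \alpha$, at which $\tanh y = \alpha$ and therefore $D(y) = \cosh y (\alpha - \tanh y) = 0$, giving $\mathcal{T} = +\infty$. For the limit, $x \to \infty$ corresponds to $y \to 0^{+}$, so $D(y) \to \alpha$; a short simplification gives $C(t_{0}) = \frac{\sqrt{3}\,t_{0}}{\sqrt{2t_{0}^{2}+3}}$, and hence $C(t_{0})/\alpha = t_{0}$.

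For monotonicity, I would show that $D$ is strictly decreasing on $(0, \arctanh \alpha)$. Computing $D'(y) = \alpha \sinh y - \cosh y$ and using that $\alpha < 1$ for $t_{0} > 0$ while $\sinh y \le \cosh y$ for all $y \ge 0$ gives $D'(y) < 0$ throughout. Consequently $\mathcal{T} = C(t_{0})/D$ is strictly increasing in $y$ on this range, and since $y$ is strictly decreasing in $x$, $\mathcal{T}(t_{0}, \cdot)$ is strictly decreasing in $x$ on $(\mathcal{R}(t_{0}), \infty)$ with limit $t_{0}$ already computed.

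The only non-mechanical observation is that the zero of the denominator in \eqref{ApproxBlowupTimeGeneralt} occurs exactly at $x = \mathcal{R}(t_{0})$ --- but this is precisely how $\mathcal{R}$ was rigged in \eqref{ThresholdFunction}, so nothing is really at stake there. I expect no serious obstacle beyond careful bookkeeping to keep the direction of monotonicity consistent under the substitution $y = 1/(t_{0} x)$; the real content of the proposition is the clean closed form one obtains after the substitution, after which everything is manifest.
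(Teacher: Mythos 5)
Your calculation is correct and complete. The paper itself omits the proof entirely (``an elementary explicit calculation which we omit''), and your substitution $y=1/(t_{0}x)$ together with the identity $\sech^{2}=1-\tanh^{2}$, reducing $\mathcal{T}$ to $C(t_{0})/(\alpha\cosh y-\sinh y)$ with $C(t_{0})=\alpha t_{0}$, is exactly the kind of elementary verification the authors had in mind; all three assertions follow as you describe.
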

\begin{proof}
The proof is an elementary explicit calculation which we omit.
\end{proof}
We provide a short interpretation of proposition \ref{ThresholdProp}. For each time $t_{0}>0$, the function $\mathcal{R}(t_{0})$ provides a threshold, such that if $u$ solves (\ref{RiccatiEquation}) and satisfies 
\begin{equation}\nonumber
u(t_{0})>\mathcal{R}(t_{0})
\end{equation}
then $u$ blows up in finite time equal to $\mathcal{T}(t_{0},u(t_{0}))>t_{0}$. Fix $t_{0}>0$. For $u(t_{0})$ close to (but above) the threshold, the blowup time can be arbitrarily large. As $u(t_{0})\to\infty$, the blowup time approaches $t_{0}$ from above. Consequently, for very large initial data, the solution survives for arbitrarily short time past $t_{0}$.

$\newline$
Since $u_{t_{0},x_{0}}$ bounds $a$ from below, we obtain:
\begin{proposition}\label{BlowupCriterionGeneralTime}
Suppose that $(a,b)_{a_{0}}$ is a solution of the system (\ref{InvSpin(7)InstantonEqSO(3)}) satisfying:
\begin{equation}\nonumber
a(t_{0})>\mathcal{R}(t_{0})\;\text{for some}\;t_{0}>0.
\end{equation}
Then $a$ blows up to $+\infty$ in finite time at most equal to $\mathcal{T}(t_{0},a(t_{0}))$.
\end{proposition}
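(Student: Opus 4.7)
The plan is to combine the strict \textit{a priori} bound of Proposition \ref{BlowupBoundProp} with a standard ODE comparison against the Riccati IVP (\ref{RiccatiEquation}). The idea is that on the interval $[t_{0},t_{\max}(a_{0}))$, the component $a$ dominates the Riccati sub-solution $u_{t_{0},a(t_{0})}$ pointwise, and since the latter is forced to escape to $+\infty$ in finite time by Proposition \ref{ThresholdProp}, the former must do so at least as early.

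First I would unpack (\ref{Comparison1BlowupSection}): substituting the explicit form of $\mathcal{P}$, one finds that Proposition \ref{BlowupBoundProp} yields the strict differential inequality
\begin{equation}\nonumber
\dt{a}(t) \;>\; a(t)\left(\frac{\sqrt{3}}{\sqrt{2t^{2}+3}}\,a(t) - \frac{1}{t}\right) \quad \text{for all } 0<t<t_{\max}(a_{0}).
\end{equation}
The right-hand side is precisely the vector field driving the Riccati problem (\ref{RiccatiEquation}) with the parameter choice $x_{0}=a(t_{0})$. Let $u(t) \defeq u_{t_{0},a(t_{0})}(t)$. At $t=t_{0}$ we have $a(t_{0}) = u(t_{0})$ but $\dt{a}(t_{0}) > \dt{u}(t_{0})$, so $a$ exceeds $u$ immediately to the right of $t_{0}$.

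Next I would upgrade this local observation to a global comparison on the common interval of existence by the usual first-crossing argument: suppose toward contradiction that there exists a first time $t_{1}>t_{0}$ (for which both $a$ and $u$ are defined) at which $a(t_{1}) = u(t_{1})$. Then $a(t) > u(t)$ on $[t_{0},t_{1})$, so $\dt{(a-u)}(t_{1}) \le 0$. However at $t_{1}$, since $a(t_{1}) = u(t_{1})$, the Riccati equation and the strict differential inequality above give $\dt{a}(t_{1}) > \dt{u}(t_{1})$, a contradiction. Hence $a(t)>u(t)$ on the entire common interval of existence.

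Finally I would invoke Proposition \ref{ThresholdProp}: the hypothesis $a(t_{0}) > \mathcal{R}(t_{0})$ places the initial condition strictly above the threshold, so $u$ blows up to $+\infty$ at the finite time $\mathcal{T}(t_{0},a(t_{0}))$. Since $a > u$ wherever both are defined and $u \to +\infty$ at $\mathcal{T}(t_{0},a(t_{0}))$, the solution $a$ cannot remain bounded past this time, so $t_{\max}(a_{0}) \le \mathcal{T}(t_{0},a(t_{0}))$ and $a \to +\infty$ at or before this instant. The only mildly delicate point is ensuring that the comparison is genuinely strict at the touching time $t_{1}$, which is precisely why the strict inequality in Proposition \ref{BlowupBoundProp} (rather than a non-strict one) is essential; the rest is routine.
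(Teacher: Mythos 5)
Your proposal is correct and follows essentially the same route as the paper, which derives the strict differential inequality from Proposition \ref{BlowupBoundProp}, compares $a$ with the explicit Riccati solution $u_{t_{0},a(t_{0})}$ of (\ref{RiccatiEquation}), and invokes Proposition \ref{ThresholdProp}; the paper simply states the comparison ("Since $u_{t_{0},x_{0}}$ bounds $a$ from below\dots") where you spell out the first-crossing argument explicitly.
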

The task is to verify that for large initial data $a_{0}$, the $a$-component of the solution eventually crosses the threshold $\mathcal{R}$, depicted below:
\begin{center}
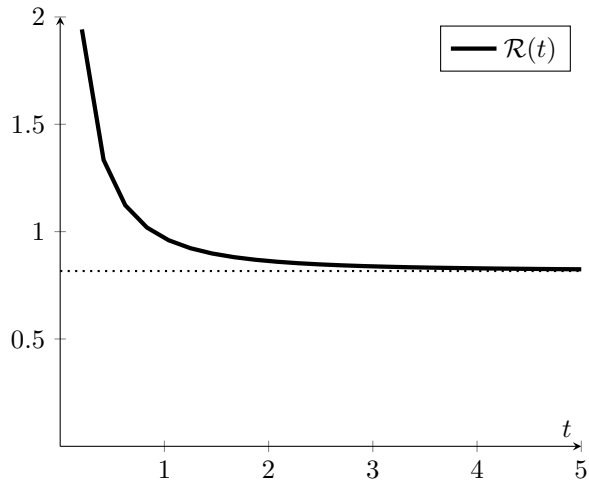

    \begin{tikzpicture}
        \begin{axis}[
            xmin=0, xmax=5,
            ymin=0, ymax=2,
            axis lines=center,
            axis on top=true,
            domain=0:5,
            xlabel=$t$,
            ]

            \addplot [mark=none,draw=black,ultra thick] {Threshold(\x)};
            \addlegendentry{$\mathcal{R}(t)$}
            %% Add the asymptote
            \draw [black, dotted, thick] (axis cs:0,0.81649658091)--(axis cs:5,0.81649658091);
        \end{axis}
    \end{tikzpicture}
    \captionof{figure}{Graph of the threshold function $\mathcal{R}$.}
\label{plot:Threshold Function Plot}
\end{center}
We will use the reference time $t_{0}=\frac{3\sqrt{2}}{2}$. This is the time where $b_{\text{HYM}}$ crosses $\mathcal{P}^{-1}$. We are able to obtain the following bound:
\begin{proposition}\label{aLargeAtChosenReference}
Fix $a_{0}>0$. Let $(a,b)_{a_{0}}$ be the development of the initial data $a_{0}$. We have:
\begin{equation}\label{aLargeAtChosenReferenceEstimate}
a\left(\frac{3\sqrt{2}}{2}\right)>\frac{2\sqrt{2}}{3}a_{0}.
\end{equation}
\end{proposition}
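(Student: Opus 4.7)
The plan is to exploit Proposition \ref{NoCross} to compare $(a,b)_{a_{0}}$ componentwise with the reference HYM solution $(a_{\text{HYM}},b_{\text{HYM}}) = (0, 1/\sqrt{t^{2}+1})$ corresponding to $A_{\text{HYM}_{\pi_{1}}}$, and then to integrate the first equation of (\ref{InvSpin(7)InstantonEqSO(3)}) explicitly. The comparison will give a sharp pointwise lower bound on $b$, and substituting this bound into the $a$-equation reduces the problem to an elementary quadrature that happens to evaluate to exactly $2\sqrt{2}/3$ at $T = 3\sqrt{2}/2$.

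First I would establish the strict inequality $b(t) > 1/\sqrt{t^{2}+1}$ for every $t > 0$ in the lifespan. Proposition \ref{NoCross} cannot be invoked at $t^{\star} = 0$ since $a(0) = a_{\text{HYM}}(0) = 0$ and $b(0) = b_{\text{HYM}}(0) = 1$. To circumvent this, I would use the Taylor expansions (\ref{anear0}) and (\ref{bnear0}) together with the expansion $b_{\text{HYM}}(t) = 1 - t^{2}/2 + 3t^{4}/8 + O(t^{6})$ to verify
\begin{equation}
a(t) - a_{\text{HYM}}(t) = a_{0}\,t + O(t^{3}), \qquad b(t) - b_{\text{HYM}}(t) = \frac{a_{0}^{2}}{6}\,t^{4} + O(t^{6}),
\end{equation}
so that both are strictly positive for every sufficiently small $t^{\star} > 0$. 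Proposition \ref{NoCross} applied from such a $t^{\star}$ then propagates the two strict componentwise inequalities to all later times in the common lifespan.

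For the main estimate, set $f(t) \defeq a(t)/t$, which extends continuously to $t = 0$ with $f(0^{+}) = a_{0}$ by (\ref{anear0}). The first ODE of (\ref{InvSpin(7)InstantonEqSO(3)}) rearranges to
\begin{equation}
\frac{d}{dt}\ln f(t) = \frac{\mathcal{P}(t)\,b(t) - 2}{t},
\end{equation}
and the relations $\mathcal{P}(0) = 2$, $b(0) = 1$ together with the Taylor expansions show that the right-hand side is $O(t)$ at the origin and thus integrable there. Combining the lower bound $b > 1/\sqrt{t^{2}+1}$ from the previous step with the identity $\mathcal{P}(t)/\sqrt{t^{2}+1} = 2\sqrt{3}/\sqrt{2t^{2}+3}$, the integrand is bounded below by $(2/t)\bigl(\sqrt{3}/\sqrt{2t^{2}+3} - 1\bigr)$, which (via the substitution $u = \sqrt{2t^{2}+3}$) admits the explicit antiderivative $2\ln\bigl(2\sqrt{3}/(\sqrt{3}+\sqrt{2t^{2}+3})\bigr)$ vanishing at $t = 0$. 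Integrating from $0$ to $T = 3\sqrt{2}/2$ and using the numerical coincidence $\sqrt{2T^{2}+3} = \sqrt{12} = 2\sqrt{3}$ yields $\ln(f(T)/a_{0}) > 2\ln(2/3)$, whence
\begin{equation}
a(T) = T f(T) > \frac{3\sqrt{2}}{2} \cdot \frac{4}{9}\,a_{0} = \frac{2\sqrt{2}}{3}\,a_{0},
\end{equation}
as required.

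The argument is essentially a one-sided Grönwall comparison, and the only real obstacle is the promotion to strict inequality at $t = 0$ in the application of Proposition \ref{NoCross}: since $(a,b)$ and $(a_{\text{HYM}},b_{\text{HYM}})$ agree to second order at the singular point $t = 0$, strict componentwise separation has to be extracted from the higher-order terms of the formal Taylor series before the monotonicity result can take over. The sharpness of the constant $2\sqrt{2}/3$ reflects the algebraic identity $\sqrt{2T^{2}+3} = 2\sqrt{3}$ at $T = 3\sqrt{2}/2$, which is presumably why this specific reference time was singled out.
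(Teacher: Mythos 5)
Your proposal is correct and follows essentially the same route as the paper: both compare $b$ with $b_{\text{HYM}}=1/\sqrt{t^{2}+1}$ via the Taylor expansions and Proposition \ref{NoCross}, feed that bound into the $a$-equation, and integrate the resulting singular linear comparison ODE — your explicit antiderivative of $\ln(a/t)$ reproduces exactly the paper's comparison solution $v_{a_{0}}(t)=36a_{0}t/(3+\sqrt{6t^{2}+9})^{2}$, whose value at $t=3\sqrt{2}/2$ is $\tfrac{2\sqrt{2}}{3}a_{0}$. Your extra care in checking that \emph{both} componentwise strict inequalities hold at a small $t^{\star}>0$ before invoking Proposition \ref{NoCross} is a welcome refinement of a step the paper states more tersely.
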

\begin{proof}
Fix $a_{0}>0$. Formula (\ref{bnear0}) demonstrates that -at least for a short time-, $b_{a_{0}}$ exceeds $b_{\text{HYM}}$ to the right of $t=0$. Proposition \ref{NoCross} establishes that $b_{a_{0}}>b_{\text{HYM}}$ until $t=t_{\text{max}}(a_{0})$. Incorporating this bound with the ODE governing $a$, we estimate:
\begin{align}
    \dt{a}&=\frac{\mathcal{P}a}{t}\left(b-\frac{1}{\mathcal{P}}\right)\nonumber\\
    &>\frac{\mathcal{P}a}{t}\left(\frac{1}{\sqrt{t^{2}+1}}-\frac{1}{\mathcal{P}}\right).\nonumber
\end{align}
Simplifying, we conclude that $a$ is bounded below by solutions of the following (singular) I.V.P:
\begin{equation}\label{ODEaExceedsThreshold}
    \begin{cases}
      & \dt{v}(t)=\frac{v(t)}{t}\left(\frac{2\sqrt{3}}{\sqrt{2t^{2}+3}}-1\right),\\
      & v(0)=0,\nonumber\\
      & \dt{v}(0)=a_{0}.
    \end{cases}
\end{equation}
The problem is well-posed (solutions exist and are uniquely determined by the prescribed initial data) and $v$ takes the form:
\begin{equation}\label{ODEaExceedsThresholdSol}
v_{a_{0}}(t)=\frac{36\;a_{0}\;t}{\left(3+\sqrt{6t^{2}+9}\right)^{2}}.
\end{equation}
The function $v_{a_{0}}$ has a global maximum at time $t=t_{0}=\frac{3\sqrt{2}}{2}$ with value $\frac{2\sqrt{2}}{3}a_{0}$.
\end{proof}
The upshot is that by choosing $a_{0}$ to be sufficiently large, we can arrange that $a(t_{0})$ exceeds any number we like. In particular, we can arrange that $a(t_{0})$ exceeds the threshold $\mathcal{R}(t_{0})$.

$\newline$
We now have enough to complete the proof of theorem \ref{Blowup}.
\begin{proof}
Evaluating (\ref{ApproxBlowupTimeGeneralt}) and (\ref{ThresholdFunction}) at the reference time $t_{0}=\frac{3\sqrt{2}}{2}$ we obtain:
\begin{equation}\label{BlowupAtChosenReference}
\mathcal{T}\left(t_{0},x\right)=\frac{3\sqrt{2}}{2}\frac{\left(1-\tanh^{2}\left(\frac{\sqrt{2}}{3x}\right)\right)^{\frac{1}{2}}}{1-2\tanh\left(\frac{\sqrt{2}}{3x}\right)},\;\;\;\mathcal{R}\left(t_{0}\right)=\frac{\sqrt{2}}{3\arctanh\left(\frac{1}{2}\right)}.
\end{equation}
Let $(a,b)$ be a solution of the system (\ref{InvSpin(7)InstantonEqSO(3)}) satisfying:
\begin{equation}\label{BlowupCriterion}
a\left(t_{0}\right)>\frac{\sqrt{2}}{3\arctanh\left(\frac{1}{2}\right)}.
\end{equation}
Using (\ref{BlowupAtChosenReference}) and proposition \ref{BlowupCriterionGeneralTime}, we conclude that the solution blows up to $+\infty$ in finite time at most equal to $\mathcal{T}\left(t_{0},a\left(t_{0}\right)\right)$. Proposition \ref{aLargeAtChosenReference} guarantees that (\ref{BlowupCriterion}) is satisfied provided that we take:
\begin{equation}\label{AugmentedBlowupCriterion}
a_{0}>\frac{1}{2\arctanh\left(\frac{1}{2}\right)}.
\end{equation}
By proposition \ref{ThresholdFunction}, the function $\mathcal{T}(t_{0},x)$ is monotonic in $x$ provided that $x>\mathcal{R}(t_{0})$. Condition (\ref{AugmentedBlowupCriterion}) guarantees that the right hand side of (\ref{aLargeAtChosenReferenceEstimate}) exceeds $\mathcal{R}\left(t_{0}\right)$ and is thus large enough for the monotonicity statement to apply. We obtain:
\begin{equation}\nonumber
    \mathcal{T}\left(t_{0},a\left(t_{0}\right)\right)<\mathcal{T}\left(t_{0},\frac{2\sqrt{2}}{3}a_{0}\right)=\frac{3\sqrt{2}}{2}\;\frac{\left(1-\tanh^{2}\left(\frac{1}{2|a_{0}|}\right)\right)^{\frac{1}{2}}}{1-2\tanh\left(\frac{1}{2|a_{0}|}\right)}.
\end{equation}
Defining $t_{\text{blowup}}(a_{0})$ to be equal to the right hand side of this inequality, we have established the first assertion of theorem \ref{Blowup}.

$\newline$
Define the positive and negative blowup sets as:
\begin{align}\nonumber
    \mathcal{S}^{+}_{\text{blowup}}&\defeq\left\{a_{0}\in\mathbb{R}\;\text{s.t.}\;a_{a_{0}}\;\text{blows up to }+\infty\text{ in finite time}\right\}\nonumber\\
    \mathcal{S}^{-}_{\text{blowup}}&\defeq\left\{a_{0}\in\mathbb{R}\;\text{s.t.}\;a_{a_{0}}\;\text{blows up to }-\infty\text{  in finite time}\right\}\nonumber
\end{align}
so that:
\begin{equation}\nonumber
   \mathcal{S}_{\text{blowup}}=  \mathcal{S}_{\text{blowup}}^{+}\cup  \mathcal{S}_{\text{blowup}}^{-}.
\end{equation}
The last assertion of theorem \ref{Blowup} will follow from proposition \ref{SystemSymmetry} if we establish the existence of $x>0$ such that:
\begin{equation}\label{PosBlowupSetChar}
    \mathcal{S}_{\text{blowup}}^{+}=(x,\infty).
\end{equation}
We first prove that the positive blowup set is open. Let $a_{0}\in\mathcal{S}^{+}_{\text{blowup}}$ and let $t_{\star}$ be the blowup time of the associated solution. By definition:
\begin{equation}\nonumber
\lim_{t\to t_{\star}}a(t)=+\infty.
\end{equation}
Consequently, there is a time $T\in [\frac{t_{\star}}{2},t_{\star})$ such that:
\begin{equation}\nonumber
    a\left(T\right)>2\sup_{t\in[\frac{t_{\star}}{2},t_{\star}]}\mathcal{R}(t).
\end{equation}
By continuity with respect to variation of the initial data, we obtain a $\delta>0$ such that for $\widetilde{a_{0}}\in(a_{0}-\delta,a_{0}+\delta)$:
\begin{equation}\nonumber
    \frac{a(T)}{2}<\widetilde{a}(T)<\frac{3}{2}a(T).
\end{equation}
Consequently:
\begin{equation}
    \widetilde{a}(T)>\sup_{t\in[\frac{t_{\star}}{2},t_{\star}]}\mathcal{R}(t)\ge \mathcal{R}(T).
\end{equation}
By proposition \ref{ThresholdProp}, the choice $\widetilde{a_{0}}$ leads to finite-time blowup and $\mathcal{S}^{+}_{\text{blowup}}$ is indeed open.

$\newline$
Finally, by proposition \ref{NoCross}, if a certain choice of $a_{0}>0$ leads to finite-time blowup, so do all $\widetilde{a_{0}}>a_{0}$. Together with openness, this property yields (\ref{PosBlowupSetChar}) for some $x\ge 0$. Theorem \ref{GlobalExistence} implies that $x>0$.
\end{proof}

\subsubsection{The Moduli Space}
The results of the preceding sections are sufficient to obtain a complete description of the moduli space of SO(5) invariant Spin(7) instantons with structure group SO(3) on the Stenzel manifold. We denote this object as $\mathcal{M}^{\text{Spin}(7)}_{\text{inv}}\left(X^{8}\right)$. The trivial bundle $P_{1}$ doesn't contribute to this moduli space. This is due to the nonexistence theorem \ref{NonexistenceTrivialBundle}. At the risk of being pedantic, we are ignoring the trivial solution $A=0$.

$\newline$
Let $P$ be a $G$-homogeneous (or cohomogeneity one) principal $S$-bundle. There are two natural ways to set up a moduli space of $G$-invariant solutions to a gauge-theoretic problem on $P$. One is to quotient the set of invariant solutions by the group of equivariant gauge transformations. The other is carried out in two steps. Initially one quotients the set of all (not necessarily invariant) solutions by the set of all (not necessarily equivariant) gauge transformations. The action of $G$ on the total space $P$ induces an action on the set of all connections. This action restricts to the set of solutions and passes to the quotient. The moduli space is then defined to be the $G$-invariant locus. There is an obvious map from the first construction to the second construction. If the fiber $S$ is semisimple and we restrict attention to irreducible connections, this map is a homeomorphism.

$\newline$
In our setting, the structure group is SO(3) (which is indeed semisimple) and furthermore, all solutions are irreducible. It follows that the two constructions coincide. We will follow the first. Recall that each invariant connection constitutes its own equivariant gauge equivalence class. Consequently, the moduli spaces on the individual bundles are:
\begin{align}
\mathcal{M}\left(P_{\pi_{1}}\right)&\defeq\left\{A\in\mathcal{A}_{\text{inv}}\left(P_{\pi_{1}}\right)\;\text{s.t.}\;\star_{g}F_{A}=-\Phi\wedge F_{A}\right\},\nonumber\\
\mathcal{M}\left(P_{\pi_{2}}\right)&\defeq\left\{A\in\mathcal{A}_{\text{inv}}\left(P_{\pi_{2}}\right)\;\text{s.t.}\;\star_{g}F_{A}=-\Phi\wedge F_{A}\right\}.\nonumber
\end{align}
Due to the results of section \ref{Pi1Sols}, we have that $\mathcal{M}\left(P_{\pi_{1}}\right)$ is a compact interval. It can be parameterized by initial conditions $a_{0}=\dt{a}(0)$ leading to global solutions. Using this parameterization, theorem \ref{Blowup} gives us a number $x>0$ such that:
\begin{equation}\label{ModuliPi1}
   \mathcal{M}\left(P_{\pi_{1}}\right)\cong[-x,x] .
\end{equation}
$\mathcal{M}\left(P_{\pi_{1}}\right)$ contains a unique HYM connection $A_{\text{HYM}_{\pi_{1}}}$ corresponding to $a_{0}=0$. It is represented by the red dot in the following diagram. The black dots represent the boundary points $\pm x$.
\begin{center}
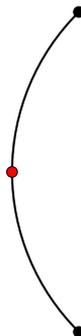

    \begin{tikzpicture}
        \draw[thick,black] (4,0) arc (180:135:3cm);
        \draw[thick,black] (4,0) arc (180:225:3cm);
        \draw[fill=black] (4.87868,2.12132) circle[radius=2pt];
        \draw[fill=black] (4.87868,-2.12132) circle[radius=2pt];
        \draw[fill=red] (4,0) circle[radius=2pt];
    \end{tikzpicture}
\captionof{figure}{The Moduli Space $\mathcal{M}\left(P_{\pi_{1}}\right)$}
\label{plot:Moduli Space 1}
\end{center}
Due to the results of section \ref{Pi2Sols}, we have that $\mathcal{M}\left(P_{\pi_{2}}\right)$ is a half-open half-closed interval. We can parameterize it by the value $\nu=b(t_{0})$ at time $t_{0}=\frac{\sqrt{6}}{2}$. Using this parameterization and setting:
\begin{equation}
    \nu_{1}\defeq-\frac{2\sqrt{5}}{5},\;\;\;\nu_{2}\defeq\frac{\sqrt{10}}{5},\nonumber\\
\end{equation}
we have that:
\begin{equation}\label{ModuliPi2}
   \mathcal{M}\left(P_{\pi_{2}}\right)=\left\{A_{\nu}\;\text{s.t.}\;\nu\in[\nu_{1},\nu_{2})\right\}\cong[\nu_{1},\nu_{2}).
\end{equation}
$\mathcal{M}\left(P_{\pi_{2}}\right)$ contains a unique HYM connection $A_{\text{HYM}_{\pi_{2}}}$ corresponding to $\nu=-\frac{\sqrt{10}}{5}$. It is represented by the green dot in the following diagram. The black dot represents the boundary point $\nu_{1}$.
\begin{center}
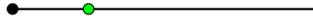

    \begin{tikzpicture}
        \draw[thick,black] (0,0) -- (4,0);
        \draw[fill=green] (1,0) circle[radius=2pt];
        \draw[fill=black] (0,0) circle[radius=2pt];
    \end{tikzpicture}
\captionof{figure}{The Moduli Space $\mathcal{M}\left(P_{\pi_{2}}\right)$}
\label{plot:Moduli Space 2}
\end{center}

$\newline$
We observe that $\mathcal{M}\left(P_{\pi_{2}}\right)$ is not compact. Interestingly, it admits a natural compactification. To understand the noncompactness phenomenon, we study the (missing) limit $\nu\to\frac{\sqrt{10}}{5}$. To identify what the limit should be we work on $X^{8}-S^{4}$. Using the explicit formula (\ref{bPi2}) with $\nu=\frac{\sqrt{10}}{5}$ yields the HYM connection $A_{\text{HYM}_{\pi_{1}}}$. We conclude that (over $X^{8}-S^{4}$):
\begin{equation}
\lim_{\nu\to\nu_{2}}A_{\nu}=A_{\text{HYM}_{\pi_{1}}}.
\end{equation}
This can be understood pointwise -with a choice of some background reference connection- or in a suitable weighted norm. 

$\newline$
We conclude that the Spin(7) instantons  in $\mathcal{M}\left(P_{\pi_{2}}\right)$ are trying to converge to the (unique) HYM connection of $\mathcal{M}\left(P_{\pi_{1}}\right)$, but fail to do so as this connection does not smoothly extend to the bundle on which they live. Notably the singularity happens around a codimension $4$ Cayley submanifold (the singular orbit $S^{4}$). This reasoning motivates us to glue in $\mathcal{M}\left(P_{\pi_{1}}\right)$, by forcing the point $a_{0}=0$ to be the missing endpoint of $\mathcal{M}\left(P_{\pi_{2}}\right)$. This leads to the following picture of $\mathcal{M}^{\text{Spin}(7)}_{\text{inv}}\left(X^{8}\right)$:
\begin{center}
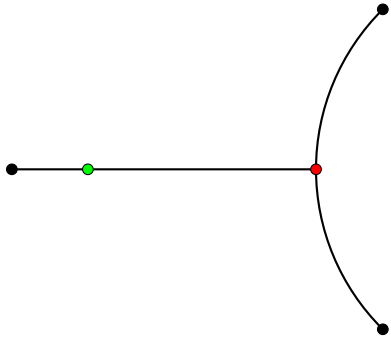

    \begin{tikzpicture}
        \draw[thick,black] (0,0) -- (4,0);
        \draw[thick,black] (4,0) arc (180:135:3cm);
        \draw[thick,black] (4,0) arc (180:225:3cm);
        \draw[fill=black] (0,0) circle[radius=2pt];
        \draw[fill=black] (4.87868,2.12132) circle[radius=2pt];
        \draw[fill=black] (4.87868,-2.12132) circle[radius=2pt];
        \draw[fill=green] (1,0) circle[radius=2pt];
        \draw[fill=red] (4,0) circle[radius=2pt];
    \end{tikzpicture}
\captionof{figure}{The Moduli Space $\mathcal{M}^{\text{Spin}(7)}_{\text{inv}}\left(X^{8}\right)$}
\label{plot:Moduli Space Full}
\end{center}

$\newline$
The notation in this diagram is consistent with figures \ref{plot:Moduli Space 1} and \ref{plot:Moduli Space 2}. Crucially, the space $\mathcal{M}^{\text{Spin}(7)}_{\text{inv}}\left(X^{8}\right)$ is compact.

$\newline$
This suggests a relationship between the Spin(7) instantons and the HYM connections. Indeed, they are not equivalent in general; but furthermore, the structure of $\mathcal{M}^{\text{Spin}(7)}_{\text{inv}}\left(X^{8}\right)$ hints that the latter might play a role in the compactification of Spin(7) instanton moduli spaces (over noncompact Calabi-Yau 4-folds). It may be a general phenomenon that certain sequences of Spin(7) instantons fail to converge because the limit lives on a different bundle. Furthermore this new bundle should agree with the original one outside of a (codimension 4) Cayley submanifold. The missing limit should be HYM.

\newpage
\section*{Acknowledgements}
The author would like to thank his PhD advisors Andrew Dancer and Jason Lotay for suggesting the problem discussed in this article and providing numerous helpful comments and recommendations. He would also like to thank Gonçalo Oliveira for an elucidating discussion regarding the issue of gauging in the equivariant setting. This work was supported by the EPSRC Centre for Doctoral Training in Partial Differential Equations: Analysis and Applications (grant number EP/L015811/1) . 
\bibliographystyle{plain} % We choose the "plain" reference style
\bibliography{bibliography} % Entries are in the "refs.bib" file

\end{document}